\documentclass[a4paper,oneside,11pt]{amsart}

\usepackage[english]{babel}
\usepackage{amsmath,amstext,amsopn,amssymb, amsfonts, amsthm, mathtools}
\usepackage[dvipsnames]{xcolor}
\usepackage{geometry}
\usepackage{bbm}
\usepackage[colorlinks, citecolor = blue]{hyperref}
\usepackage{tcolorbox}
\usepackage[shortlabels]{enumitem}
\usepackage{esint}
\usepackage{scalerel}  
\usepackage{tikz}
\usepackage{float}
\usepackage{graphicx}  
\usepackage{mathrsfs}

\usepackage{todonotes}

\newtheorem{theorem}{Theorem}[section]
\newtheorem{lemma}[theorem]{Lemma}
\newtheorem{proposition}[theorem]{Proposition}
\newtheorem{corollary}[theorem]{Corollary}

\newtheorem{ltheorem}{Theorem}

\theoremstyle{definition}
\newtheorem{definition}[theorem]{Definition}

\theoremstyle{remark}
\newtheorem{remark}[theorem]{Remark}

\numberwithin{equation}{section}

\newtheorem*{definition*}{Definition}
\newtheorem*{theorem*}{Theorem}
\newtheorem{proposition*}{Proposition}
\newtheorem*{lemma*}{Lemma}
\newtheorem*{corollary*}{Corollary}
\newtheorem*{remark*}{Remark}

\DeclareMathOperator*{\esssup}{ess\,sup}

\DeclareMathOperator{\loc}{loc}

\newcommand{\Hardy}{H}
\DeclareMathOperator{\dist}{dist}

\DeclareMathOperator{\ind}{\mathbf{1}}
\DeclareMathOperator{\supp}{supp}

\newcommand{\N}{\ensuremath{\mathbb{N}}}
\newcommand{\Z}{\ensuremath{\mathbb{Z}}}

\newcommand{\R}{\ensuremath{\mathbb{R}}}
\newcommand{\C}{\ensuremath{\mathbb{C}}}

\newcommand{\mc}{\mathcal}
\newcommand{\ms}{\mathscr}

\DeclarePairedDelimiter\abs{\lvert}{\rvert}

\DeclarePairedDelimiter\cbrace\{\}
\DeclarePairedDelimiter\br()
\DeclarePairedDelimiter{\ip}\langle\rangle
\DeclarePairedDelimiter{\nrm}\lVert\rVert

\DeclarePairedDelimiter{\ceil}{\lceil}{\rceil}

\newcommand{\nrmb}[1]{\bigl\|#1\bigr\|}
\newcommand{\absb}[1]{\bigl|#1\bigr|}
\newcommand{\brb}[1]{\bigl(#1\bigr)}
\newcommand{\cbraceb}[1]{\bigl\{#1\bigr\}}
\newcommand{\ipb}[1]{\bigl\langle#1\bigr\rangle}

\newcommand{\nrmB}[1]{\Bigl\|#1\Bigr\|}
\newcommand{\absB}[1]{\Bigl|#1\Bigr|}
\newcommand{\brB}[1]{\Bigl(#1\Bigr)}

\newcommand{\bracB}[1]{\Bigl[#1\Bigr]}

\newcommand{\dd}{\hspace{2pt}\mathrm{d}}

\newcommand{\D}{\mathscr{D}}
\newcommand{\F}{\mathcal{F}}

\newcommand{\cexp}{\mathsf{E}}
\newcommand{\diff}{\mathsf{D}}
\newcommand{\per}{\mathsf{P}}
\newcommand{\MaxPer}{\mathcal{P}}
\newcommand{\MaxAvg}{\mathcal{M}}

\newcommand{\Ss}{\mathcal{S}}
\newcommand{\SpOp}{\mathcal{A}}
\newcommand{\CancSpOp}{\mathcal{C}}
\newcommand{\Car}{\mathrm{car}}

\renewcommand{\emptyset}{\varnothing}

\DeclareFontFamily{U}{wncy}{}
\DeclareFontShape{U}{wncy}{m}{n}{<->wncyr10}{}
\DeclareSymbolFont{mcy}{U}{wncy}{m}{n}
\DeclareMathSymbol{\Sh}{\mathord}{mcy}{"58} 

\allowdisplaybreaks

\begin{document}

\title{Cancellative sparse domination}

\author[Conde-Alonso]{Jos\'{e} M. Conde Alonso}
\address{Jos\'{e} M. Conde Alonso \hfill\break\indent 
 Departamento de Matem\'aticas \hfill\break\indent 
 Universidad Aut\'onoma de Madrid \hfill\break\indent 
 C/ Francisco Tom\'as y Valiente sn\hfill\break\indent 
 28049 Madrid, Spain}
\email{jose.conde@uam.es}

\author[Lorist]{Emiel Lorist}
\address{Emiel Lorist \hfill\break\indent 
Delft Institute of Applied Mathematics \hfill\break\indent
Delft University of Technology \hfill\break\indent
P.O. Box 5031 \hfill\break\indent
2600 GA Delft, The Netherlands}
\email{e.lorist@tudelft.nl}

\author[Rey]{Guillermo Rey}
\address{Guillermo Rey \hfill\break\indent 
 Departamento de Matem\'aticas \hfill\break\indent 
 Universidad Aut\'onoma de Madrid \hfill\break\indent 
 C/ Francisco Tom\'as y Valiente sn\hfill\break\indent 
 28049 Madrid, Spain}
\email{guillermo.rey@uam.es}

\thanks{Conde-Alonso was partially supported by grants \texttt{CNS2022-135431} (Ministerio de Ciencia, Spain) and by the 2025 Leonardo Grant for Scientific Research and Cultural Creation \texttt{LEO25-1-19094-CBB-MAT-169} (BBVA Foundation). The BBVA Foundation accepts no responsibility for the opinions, statements and contents included in the project and/or the results thereof, which are entirely the responsibility of the authors. Lorist was partially financed by the Dutch Research Council (NWO) on the project ``The sparse revolution for stochastic partial differential equations'' with project number \href{https://doi.org/10.61686/ZGRMR99948}{VI.Veni.242.057}. Rey was partially supported by grants \texttt{PID2022-139521NA-I00} and \texttt{RYC2024-051323-I}, both funded by \texttt{MICIU/AEI/10.13039/501100011033}.
}
	
\begin{abstract}
We present a general sparse domination principle which respects the cancellative structure of the functions under study. We obtain sparse domination results in filtered measure spaces, including martingale settings in one and two parameters, as well as in the Euclidean setting. In the one parameter martingale setting, we obtain a sparse characterization of the $H^1$-norm. The proofs make critical use of precise level-set estimates for generalized medians. Our results imply new, quantitatively sharp, weighted results for martingale transforms and Calderón-Zygmund operators from $H^p(w)$ to $L^p(w)$.
\end{abstract}
	
\keywords{Martingales, conditional medians, Calder\'on-Zygmund operators, sparse domination, Hardy spaces}
\subjclass[2020]{42B20, 60G46, 42B25, 42B30}

\maketitle

\section*{Introduction}

Many results in analysis involve the precise description of certain features of functions: size, speed of growth, mass, oscillation... A modern technique to study the size of functions, which has been used very frequently in harmonic analysis, is sparse domination. The technique was initiated by Lerner \cite{Le13a,Le13b} as an alternative route to the $A_2$ theorem, based on domination of functions instead of their representation in terms of appropriate dyadic objects \cite{Pe07,Hy12}. The subsequent developments over the past fifteen years have resulted in precise weighted bounds in $L^p$ for numerous operators, including martingale transforms \cite{Lac2017} and Calder\'on-Zygmund operators \cite{CR16,LN15}, and even some new characterizations of endpoint estimates \cite{CCDPO17}, to cite a few applications. 

Sparse domination for an operator $T$ is an estimate of the form
\begin{equation}\label{eq:ClassicalSparse} \tag{SizeSparse} 
|Tf| \lesssim \SpOp_\Ss |f|,
\end{equation}
where the inequality may hold pointwise or in some weaker form. The operator $\SpOp_\Ss$ is an averaging operator over a family of sets $\Ss$:
$$
\SpOp_\Ss f(x) = \sum_{Q \in \Ss} \left|\langle f \rangle_Q\right| \ind_Q(x),
$$
where $\ip{f}_Q:=\frac{1}{\abs{Q}}\int_Qf$.
The family $\Ss$ depends on $f$ but is $\eta$-sparse for some $0<\eta<1$  independent of $f$.
One of the equivalent ways to define an $\eta$-sparse collection $\mathcal{S}$ is to say that for each $Q\in\Ss$ there exists $E_Q \subseteq Q$ such that $|E_Q| \geq \eta |Q|$ and the sets $E_Q$ are pairwise disjoint. 

Sparse domination has been a very successful tool to analyze the $L^p$-norms of operators and functions, and especially so to tackle delicate quantitative questions pertaining to the  (weak) $L^1$-norm. All available sparse domination techniques are critically based on an estimate that does not see any cancellation present in the function $f$. Indeed, sparse families are generally constructed using the level sets of an operator for which a local weak-type estimate is available, which is the key to averaging over a family that ends up being sparse. The process eliminates any information about the cancellation --or the smoothness-- of the functions involved. While this is harmless for strong- or weak-type estimates, one cannot hope to get any results applicable to spaces in which cancellation is essential, like the Hardy space $\Hardy^1$. Indeed, $\SpOp_\Ss|f|$ is in general not an $L^1$ function for $f \in \Hardy^1$, so \eqref{eq:ClassicalSparse} cannot recover the $\Hardy^1 \to L^1$ boundedness of $T$. The problem is easiest to illustrate in the dyadic setting: if we let $\ms{D}$ be the standard dyadic grid and define
\begin{equation}\label{eq:dyadMax}\tag{MaxD}
\MaxAvg_\D f(x) = \sup_{x\in Q\in\D} |\langle f \rangle_Q|,    
\end{equation}
then $\|f\|_{\Hardy^1_\D} = \|\MaxAvg_\D f\|_1$, and one is immediately tempted to search for sparse domination results of the form
$$
\MaxAvg_\D f \lesssim \SpOp_\Ss f.
$$
We shall see in the body of the paper that such an estimate cannot hold in general. Nevertheless, our goal is to give a form of sparse domination that still preserves the cancellation properties of the function. What we shall do is to reinforce the right-hand side of the desired inequality by precomposing with an operator which is bounded on $L^p$ for $0<p<\infty$. Said operator is the percentile maximal function. Given $0<r<1$, a set of positive and finite measure $\Omega$ and a locally integrable function $f$, we set
$$
\per_\Omega^r(f) := \inf\Big\{\lambda \in \R:\, |\{x \in \Omega:\, f(x) > \lambda \}| \leq r|\Omega|\Big\}.
$$
We have 
\begin{align*}
      |\{x \in \Omega:\, f(x) > \per_\Omega^r(f)\}| &\leq r|\Omega|, \\
   |\{x \in \Omega:\, f(x) < \per_\Omega^r(f)\}| &\leq (1-r)|\Omega|, 
\end{align*}
so $\per_\Omega^{\frac12}(f)$ is nothing but a choice of a median of $f$ over $\Omega$. Given any value of $r$, $f$ lies below $\per^r_\Omega(f)$ on a portion of $\Omega$ of measure $(1-r)|\Omega|$. Therefore, we interpret $\per^{1-r}_\Omega(f)$ as the $r$-th percentile of $f$ over $\Omega$. We form the maximal function
$$
\MaxPer^r_\D f(x) = \sup_{x\in Q\in \D} \per_Q^r(|f|).
$$
When no confusion may arise, the precise value of $r$ is both fixed and unimportant, so we omit it from the notation. 

Our first sparse domination result can be applied to many natural operators in dyadic or, more generally, martingale settings. For the latter context, we postpone introducing the terminology and associated technicalities to the body of the paper. Likewise, we postpone the definition of the operators under consideration until we specifically deal with them.

\begin{ltheorem}\label{th:theoremA}
    Let $T$ be a martingale transform or a Haar shift operator. There exists $r \in (0,1)$ such that for each bounded $f$, there is a sparse $\Ss \subseteq \D$ so that
    $$
    \abs{Tf(x)} \lesssim \sum_{Q\in \Ss} \per_Q^r(\MaxAvg_Q f) \ind_Q(x), \qquad x \in \R^d.
    $$
\end{ltheorem}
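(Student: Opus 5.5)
The plan is an iterative stopping-time construction in which the stopping level on each cube $Q$ is the percentile
$$\lambda_Q:=\per_Q^r(\MaxAvg_Q f),$$
rather than an average of $|f|$. I write the argument for a martingale transform $Tf=\sum_{R\in\D}\varepsilon_R\diff_R f$ with $|\varepsilon_R|\le 1$, and take $\MaxAvg_Q f(x)=\sup_{x\in R\subseteq Q,\,R\in\D}|\ip{f}_R|$. For Haar shifts of complexity $(m,n)$, the same argument runs with $\diff_R$ replaced by the shift component living on $R$, with stopping cubes measured $\max(m,n)+1$ generations down and constants depending on the complexity. Since $f$ is bounded, the usual reduction lets me work inside a fixed top cube $Q_0$ (treating each dyadic quadrant in turn, or letting $Q_0$ grow). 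The large-scale part $\ip{f}_{Q_0}$ is harmless, because $|\ip{f}_{Q_0}|\le \MaxAvg_{Q_0}f$ pointwise on $Q_0$ and hence $|\ip{f}_{Q_0}|\le\lambda_{Q_0}$.

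\textbf{Step 1 (good/bad splitting on one cube).} Fix $Q\in\Ss$. Let $E_Q=\{\MaxAvg_Q f>\lambda_Q\}$, so that $|E_Q|\le r|Q|$ by the defining property of $\per^r_Q$. Let $\{P_j\}$ be the maximal dyadic cubes contained in $E_Q$; these are exactly the maximal $P\subseteq Q$ with $|\ip{f}_P|>\lambda_Q$. Form the Gundy/Calderón--Zygmund splitting $f\ind_Q=g+b$, where $g=f$ on $Q\setminus E_Q$ and $g=\ip{f}_{P_j}\ind_{P_j}$ on each $P_j$.
\begin{itemize}
\item[(a)] $|g|\le\lambda_Q$ on $Q\setminus E_Q$ by Lebesgue differentiation.
\item[(b)] On each $P_j$ we only know $|\ip{f}_{P_j}|>\lambda_Q$. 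However, the martingale differences $\diff_S f$ with $S\supsetneq P_j$ depend only on averages over cubes not contained in $E_Q$, except for the single jump at the parent $\widehat{P_j}$.
\end{itemize}
So I truncate $g$ further by replacing $\ip{f}_{P_j}$ with $\ip{f}_{\widehat{P_j}}$, which is bounded by $\lambda_Q$. This gives a function $\tilde g$ with $\|\tilde g\|_\infty\le\lambda_Q$. Moreover, the partial sums $\sum_{P_j\subsetneq \widehat{P_j}\subseteq S\subseteq Q}\varepsilon_S\diff_S$ of $f$ coincide with those of $\tilde g$, apart from the last jump at $\widehat{P_j}$.

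By the $L^2$-boundedness of the maximal truncated martingale transform (Doob together with orthogonality),
$$\bigl|\{x\in Q: T^\star_Q\tilde g(x)>C\lambda_Q\}\bigr|\le C^{-2}\lambda_Q^{-2}\|\tilde g\|_2^2\le C^{-2}|Q|.$$

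\textbf{Step 2 (stopping cubes and sparseness).} Define $\mathrm{ch}_\Ss(Q)$ as the maximal dyadic $R\subsetneq Q$ such that either $R\subseteq E_Q$, or
$$\Bigl|\sum_{R\subsetneq S\subseteq Q}\varepsilon_S\diff_S f\Bigr|>C\lambda_Q\quad\text{on }R.$$
By Step 1, $\bigl|\bigcup\mathrm{ch}_\Ss(Q)\bigr|\le (r+C^{-2})|Q|\le\tfrac12|Q|$ once $r$ is small and $C$ is large. Iterating from $Q_0$ then yields a $\tfrac12$-sparse family.

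\textbf{Step 3 (pointwise estimate).} Off the stopping cubes, the partial sums from $Q$ down are bounded by $C\lambda_Q$ by construction. On a stopping child $R$, the sum over $S\supsetneq R$, $S\subseteq Q$, splits into two pieces:
\begin{itemize}
\item[(i)] the sum down to $\widehat R$, which is at most $C\lambda_Q$ by maximality of $R$;
\item[(ii)] the jump $\diff_{\widehat R}f|_R=\ip{f}_R-\ip{f}_{\widehat R}$.
\end{itemize}
Here $|\ip{f}_{\widehat R}|\le\lambda_Q$ because $\widehat R\not\subseteq E_Q$. The possibly huge term $|\ip{f}_R|$ is absorbed into the next generation: $\MaxAvg_R f\ge|\ip{f}_R|$ everywhere on $R$, so $|\ip{f}_R|\le\per^r_R(\MaxAvg_R f)=\lambda_R$. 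Telescoping the partial sums along the chain of stopping cubes containing $x$ then gives
$$|Tf(x)|\lesssim\sum_{Q\in\Ss}\lambda_Q\ind_Q(x).$$

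\textbf{Main obstacle.} I expect the main difficulty to be exactly this treatment of the bad part. Classical sparse arguments control $\diff_S f$ by $\ip{|f|}$, which destroys cancellation. Here the only available control is $\lambda_Q$, so one must check carefully two things. First, the coefficients of $\tilde g$ really reproduce the truncated sums of $f$ above the stopping cubes. Second, the boundary jump at $\widehat R$ is charged to $\lambda_R$ rather than to $\lambda_Q$. For Haar shifts this bookkeeping involves several generations, and I would first verify it for complexity $(0,0)$ before adapting the stopping rule to $\max(m,n)+1$ generations.
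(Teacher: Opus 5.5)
\textbf{Gap in Step 1.} Your overall architecture is the paper's. You stop at the level $\lambda_Q=\per_Q^r(\MaxAvg_Q f)$, build a good function bounded by $\lambda_Q$ via a one-generation (regularity) argument, and apply Chebyshev with the $L^2$ bound. You then charge $\ip{f}_R$ to $\lambda_R$ via $\MaxAvg_R f\ge|\ip{f}_R|$ on $R$, which is the role of $\sigma_{\nu_1}f_{\nu_1}$ in the paper's term II, and iterate.

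The crux, however, fails: $\tilde g$ in Step 1 does not do what you claim. Overwriting $f$ on $P_j$ alone by $\ip{f}_{\widehat{P_j}}$ changes $\int_{P_j}$. So for every dyadic $S\subseteq Q$ with $S\not\subseteq E_Q$,
\[
\ip{\tilde g}_S-\ip{f}_S=\frac{1}{|S|}\sum_{P_j\subseteq S}|P_j|\bigl(\ip{f}_{\widehat{P_j}}-\ip{f}_{P_j}\bigr),
\]
which is nonzero in general. For instance, if $S=\widehat{P_j}$ contains no other $P_k$, it equals $2^{-d}(\ip{f}_{\widehat{P_j}}-\ip{f}_{P_j})\neq 0$ whenever $|\ip{f}_{P_j}|>\lambda_Q$.

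Hence $\diff_S\tilde g\neq\diff_S f$ at every scale above $\widehat{P_j}$, not only at the last jump. The bound on $|\{T^\star_Q\tilde g>C\lambda_Q\}|$ therefore says nothing about the partial sums $\sum_{R\subsetneq S\subseteq Q}\varepsilon_S\diff_S f$ that define your second kind of stopping cube. The estimate $(r+C^{-2})|Q|$ in Step 2 is unproved. The unmodified good part $g$ does have the right differences, but for $d\ge 2$ it is unbounded: cancellation among several bad children of $\widehat{P_j}$ leaves $|\ip{f}_{P_j}|$ uncontrolled.

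\textbf{Repair.} Freeze $f$ one generation up instead of overwriting it. Let $\mathcal{P}^*$ be the maximal cubes among the $\widehat{P_j}$, and put $\tilde g=\ip{f}_P$ on each $P\in\mathcal{P}^*$ and $\tilde g=f$ elsewhere. Then:
\begin{enumerate}[(i)]
\item $|\tilde g|\le\lambda_Q$, since no $P\in\mathcal{P}^*$ lies in $E_Q$;
\item $\diff_S\tilde g=\diff_S f$ for every $S$ not contained in a cube of $\mathcal{P}^*$;
\item $|\bigcup\mathcal{P}^*|\le 2^d|E_Q|$.
\end{enumerate}
The first kind of stopping cube must then be the cubes of $\mathcal{P}^*$ rather than the $P_j$. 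This gives the bound $(2^d r+C^{-2})|Q|$, and your Step 3 then goes through.

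This is exactly the paper's device. It uses the stopped martingale $f^\tau$, with $\tau$ the first time $\cexp_m[\ind_B]$ exceeds $1/(2^d+1)$ (regularity constant $2^d$). Equivalently, in the Haar shift case it uses the Calder\'on--Zygmund decomposition at $B^{(1)}=\{\MaxAvg_\D\ind_B>\frac{1}{2^d+1}\}$. In both, the stopping cubes have $B$-density below $1$, hence controlled averages.

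\textbf{Haar shifts.} ``The same argument with $\diff_R$ replaced by the shift component on $R$'' hides real work. For complexity $(t,s)$ (your $(m,n)$), the component on $R$ involves $\ip{f,h_T}$ with $T\in\D_t(R)$ and $h_S$ with $S\in\D_s(R)$. So a bad part with mean zero on the stopping cubes still affects components up to $t$ generations above them. Moreover, the ancestor terms within a bounded number of generations (depending on $s,t$) of a stopping cube are neither constant on it nor controlled by a single parent average. Your Steps 2 and 3 rely on both of these properties.

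The paper handles this with the $N$-fold enlargement $\Omega^{(N)}$, $N=s+t+1$:
\begin{enumerate}[(i)]
\item it confines $\Sh_Q f_1$ to $B^{(N)}$ in the local percentile estimate;
\item it bounds the ancestor terms with $\ell\le N+1$ by $\sup_{P\in\D_1(T)}|\ip{f}_P|\le\per_Q^r(\MaxAvg_Q f)$, using that cubes at most $N-1$ generations below a maximal cube of $\Omega^{(N)}$ meet $\Omega^c$;
\item it controls the remaining part, which is constant on $Q_j^{(1)}$, at a point $y\in Q_j^{(1)}\setminus\Omega^{(N)}$.
\end{enumerate}
You need this multi-generation version of the freezing above, not just a relabelling.
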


Above, $\MaxAvg_Q$ denotes the restriction of $\MaxAvg_\D$ where the supremum runs over over dyadic subcubes of $Q$. The easy chain of inequalities
$$
|\langle f \rangle_Q| \leq \per_Q^r(\MaxAvg_Q(f)) \leq \tfrac{1}{r} \langle |f|\rangle_Q
$$
shows that Theorem \ref{th:theoremA} is an improvement over the usual sparse domination statements, while at the same time it avoids formulations that are too strong to hold in general. Its proof is based on the local inequality
$$
\per_Q^{Cr}(\abs{Tf}) \lesssim_r \per_Q^r(\MaxAvg_Q(f)), \qquad \mbox{for some }C>1.
$$
We use it to bypass the weak-type $(1,1)$ inequality for the non-cancellative version of $\MaxAvg_\D$, which is used in one way or another in all of the existing sparse domination principles and which makes cancellative estimates hopeless. We shall prove Theorem \ref{th:theoremA} via the corresponding estimates for maximal truncations of the operators under study. We shall show in the body of the paper that the composite sparse operator 
$$
\sum_{Q\in \Ss} \per_Q^r(\MaxAvg_Q f) \ind_Q(x)
$$
maps $\Hardy_\D^1(\R^d)$ to $L^1(\R^d)$. This is because $\MaxPer^r_\D$ is bounded on $L^p(\R^d)$ for all $0<p\leq \infty$, and therefore 
$$
\CancSpOp_\Ss f(x) := \sum_{Q\in \Ss} \per_Q^r(|f|) \ind_Q(x) 
$$
is bounded on $L^p(\R^d)$, $0<p<\infty$. Hence, Theorem \ref{th:theoremA} allows one to recover the known Hardy space endpoint boundedness results for $T$. 

Theorem \ref{th:theoremA} is a cancellative $\ell^1$-estimate. In sparse domination, the easiest operators to study are often single-scale ones, of which maximal functions like the one above are the archetypal example. For the maximal operator in the non-cancellative setting one can easily prove the stronger $\ell^\infty$-estimate
$$
\sup_{x\in Q \in \D} \langle |f| \rangle_Q \lesssim \sup_{x\in Q \in \Ss} \langle |f| \rangle_Q,
$$
with $\Ss$ being sparse and dependent on $f$. One can prove an estimate similar to Theorem \ref{th:theoremA} for $\MaxAvg_\D$ in an $\ell^\infty$ sense and tied to the martingale structure of the dyadic system. We will however give an alternative, cancellative sparse domination result for $\MaxAvg_\D$ in terms of the sparse maximal function $\MaxAvg_\Ss$ --defined similarly to \eqref{eq:dyadMax} but with the sup taken over a sparse subfamily of $\D$--, which has additional interesting consequences. The first one is that it yields a sparse characterization of the dyadic $\Hardy^1$-norm.

\begin{ltheorem}\label{th:theoremB}
    Let $f\in \Hardy_\D^1(\R^d)$. Then
    $$
    \|f\|_{\Hardy_\D^1} \eqsim \sup_{\Ss \; \mbox{sparse}} \left\|\MaxAvg_\Ss f\right\|_{L^1}
    \eqsim \sup_{\Ss \; \mbox{sparse}} \left\|\SpOp_\Ss f\right\|_{L^1}.
    $$
\end{ltheorem}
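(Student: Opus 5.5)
The plan is to prove the chain
$$
\|f\|_{\Hardy^1_\D}\lesssim \sup_{\Ss}\|\MaxAvg_\Ss f\|_{L^1}\lesssim \sup_{\Ss}\|\SpOp_\Ss f\|_{L^1}\lesssim \|f\|_{\Hardy^1_\D}.
$$
Throughout, $\|f\|_{\Hardy^1_\D}=\|\MaxAvg_\D f\|_{L^1}$.

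\textbf{Middle inequality.} This step is immediate. For any collection $\Ss$ we have pointwise
$$
\MaxAvg_\Ss f(x)=\sup_{x\in Q\in\Ss}|\langle f\rangle_Q|\le \sum_{Q\in\Ss}|\langle f\rangle_Q|\ind_Q(x)=\SpOp_\Ss f(x).
$$

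\textbf{Upper bound.} I would show $\|\SpOp_\Ss f\|_{L^1}\lesssim_\eta \|f\|_{\Hardy^1_\D}$ for every $\eta$-sparse $\Ss$. Using the sets $E_Q$,
$$
\|\SpOp_\Ss f\|_1=\sum_{Q\in\Ss}|\langle f\rangle_Q||Q|\le \eta^{-1}\sum_{Q\in\Ss}|\langle f\rangle_Q||E_Q|\le \eta^{-1}\sum_{Q\in\Ss}\int_{E_Q}\MaxAvg_\D f .
$$
The last inequality holds because $|\langle f\rangle_Q|\le \MaxAvg_\D f$ on $Q\supseteq E_Q$. Since the $E_Q$ are pairwise disjoint, the right-hand side is at most $\eta^{-1}\|\MaxAvg_\D f\|_1$. (If instead we reach this through the percentile operator $\CancSpOp_\Ss$ and boundedness of $\MaxPer$, the disjointness argument is the same.)

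\textbf{Lower bound.} This is the main obstacle: for each $f\in \Hardy^1_\D$ I must construct a sparse $\Ss=\Ss(f)$ with $\|\MaxAvg_\D f\|_1\lesssim\|\MaxAvg_\Ss f\|_1$. A pointwise bound $\MaxAvg_\D f\lesssim \MaxAvg_\Ss f$ fails in general, so the estimate will only hold after integration. I would use a stopping-time construction on level sets of $\MaxAvg_\D f$. For $k\in\Z$, let $\mathcal{Q}_k$ be the maximal dyadic cubes $Q$ with $|\langle f\rangle_Q|>2^k$, and set $\Omega_k=\bigcup\mathcal{Q}_k=\{\MaxAvg_\D f>2^k\}$. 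Then
$$
\|\MaxAvg_\D f\|_1\eqsim\sum_k 2^k|\Omega_k|.
$$
The collection $\bigcup_k\mathcal{Q}_k$ need not be sparse, because $\Omega_{k+1}$ may fill most of a cube of $\mathcal{Q}_k$. I would handle this by sparsifying. Select $Q\in\mathcal{Q}_k$ only when $|Q\cap\Omega_{k+m}|\le\frac12|Q|$, for a fixed large $m$, and otherwise pass to descendants. Then I would argue that the unselected cubes contribute a mass controlled by the selected ones at higher levels. On a selected cube $Q$ of level $k$ we have $\MaxAvg_\Ss f\ge|\langle f\rangle_Q|>2^k$ on all of $Q$, which gives a lower bound $2^k|Q|$. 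A non-selected cube $Q\in\mathcal{Q}_k$ satisfies $|Q|<2|Q\cap\Omega_{k+m}|$, so its contribution $2^k|Q|\le 2^{k+1}|Q\cap\Omega_{k+m}|$ is dominated by a factor $2^{1-m}$ times the level-$(k+m)$ mass. Summing this geometrically, with $m$ large, absorbs the non-selected cubes into the selected ones.

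The delicate point is that $\Omega_{k+m}\cap Q$ is made of cubes from $\mathcal{Q}_{k+m}$ which might themselves be non-selected, so the absorption must be organised as an iteration over $k$. I would try to bound
$$
\sum_k 2^k|\Omega_k|\le \sum_k 2^k\Bigl|\bigcup\{Q\in\mathcal{Q}_k \text{ selected}\}\Bigr|+2^{1-m}\sum_k 2^{k}|\Omega_k|,
$$
and then absorb the last term, using finiteness of $\|\MaxAvg_\D f\|_1$ (first reducing to truncated $f$). Finally, I need to check that the selected cubes form a sparse family. For this I take $E_Q=Q\setminus\Omega_{k+m}$, and since cubes at levels between $k$ and $k+m$ may overlap, I split $\Ss$ into $m$ subfamilies by $k \bmod m$ and use the $\ell^\infty$ nature of $\MaxAvg_\Ss$ to combine them. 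The overlap bookkeeping between levels is where I expect most of the work.
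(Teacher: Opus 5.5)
Your argument is correct, and for the lower bound it takes a genuinely different route from the paper.

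The paper proves the pointwise cancellative bound $\MaxAvg f\le 2\,\MaxPer^{\frac12}(\MaxAvg_\Ss f)$ (Proposition~\ref{prop:keyIneqMaximals}). It runs through the value scales from the top down and keeps a set only if its conditional overlap with the \emph{already selected} sets is at most $\frac12$. Every discarded point is then controlled pointwise by a conditional median of $\MaxAvg_\Ss f$, and integrating only costs the factor from Lemma~\ref{lem:levelSetMaxPer}.

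You instead compare each maximal cube with the whole level set $\Omega_{k+m}$, and never control the discarded cubes pointwise. Writing $\mathrm{sel}_k$ for the union of the selected cubes in $\mathcal{Q}_k$, you absorb the discarded mass in the integral through $|\Omega_k|\le|\mathrm{sel}_k|+2|\Omega_{k+m}|$. This uses two things: the a priori finiteness of $\sum_k2^k|\Omega_k|$, which is automatic for $f\in\Hardy^1_\D$ (so no truncation is needed), and the disjointness of the cubes in $\mathcal{Q}_k$.

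What each route buys:
\begin{itemize}
\item Your route is more elementary, with no conditional medians at all. It would also work on general filtrations, with the stopping times $\inf\{n:|f_n|>2^k\}$ in place of maximal cubes.
\item The paper's route yields a pointwise statement with a single sparse family, which serves every quasi-norm in which $\MaxPer$ is bounded.
\item The paper's selection never needs disjointness within a layer, so it extends to arbitrary countable collections (Theorem~\ref{rem:SparseDomGeneralSets}), e.g.\ dyadic rectangles. There your layers are no longer disjoint unions, and both your absorption and your per-layer sparseness break down.
\end{itemize}

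The remaining bookkeeping is lighter than you expect:
\begin{itemize}
\item Taking $m=2$ already gives $\sum_k2^k|\Omega_k|\le 2\sum_k2^k|\mathrm{sel}_k|$.
\item Non-selected cubes should simply be discarded; the ``descendants'' play no role.
\item Within a residue class mod $m$, for $k'\ge k+m$ one has $E_{Q'}\subseteq\Omega_{k'}\subseteq\Omega_{k+m}$, which is disjoint from $E_Q=Q\setminus\Omega_{k+m}$. A cube selected at level $k$ cannot lie in $\mathcal{Q}_{k'}$ for $k'\ge k+m$, so no cube is counted twice in a class.
\item Selected cubes of different levels overlap, so pass to $\|\MaxAvg_\Ss f\|_{L^1}$ through $\mathrm{sel}_k\subseteq\{\MaxAvg_\Ss f>2^k\}$ and the layer-cake formula rather than cube by cube.
\end{itemize}
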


As we said above, the proof of Theorem \ref{th:theoremB} uses a cancellative sparse $\ell^\infty$-domination result for $\MaxAvg_{\D}$. We emphasize that the corresponding non-cancellative estimates do not yield a useful characterization in this setting, and we shall provide evidence of this in the body of the paper. We have stated Theorems \ref{th:theoremA} and \ref{th:theoremB} for the (regular) dyadic martingale, but both hold in much wider generality. Theorem \ref{th:theoremA} holds for martingale transforms in any $\sigma$-finite filtered measure space with the additional property of regularity, the martingale analogue of the doubling property. We will show that the regularity hypothesis is necessary. Theorem \ref{th:theoremB} holds in the same setting, even without the regularity assumption. Finally, we also state and prove a natural analogue of Theorem \ref{th:theoremA} for the martingale square function, completing our analysis of martingale-related objects. There are a few technicalities involved in our general formulation, which we postpone to Section \ref{sec:sec1}. 

Our sparse domination principle for single-scale operators is very flexible: neither the regularity of the filtration is needed, nor is it important that we have a family of nested partitions. This allows us to deduce the second consequence of our alternative approach to cancellative sparse domination of maximal functions. Leveraging the fact that we do not need our operators to satisfy a weak-$L^1$ estimate, we can consider the (cancellative) strong maximal operator. This can be defined as follows: 
$$
\MaxAvg_{\D \times \D}f(x) = \sup_{\substack{
Q_1 \in \D(\R^{d_1}), Q_2 \in \D(\R^{d_2}): \\ x \in Q_1 \times Q_2 }} \left|\langle f \rangle_{Q_1\times Q_2}\right|.
$$
The corresponding maximal median operator $\MaxPer_{\D \times \D}^r$ is defined in the expected way, and we call any set $Q_1\times Q_2$ as above a dyadic rectangle. Our result reads as follows:

\begin{ltheorem}\label{th:MaxStrong}
For each $f\in L^\infty(\R^{d_1+d_2})$, there exists a sparse family $\Ss$ of dyadic rectangles such that
$$
\MaxAvg_{\D \times \D}f(x) \leq 2\,\MaxPer_{\D \times \D}^{\frac12}(\MaxAvg_\Ss f)(x), \qquad x \in \R^{d_1+d_2}. 
$$
\end{ltheorem}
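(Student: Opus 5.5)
The strategy is a stopping-time (level-set) construction of the sparse family that uses only the rectangles themselves, with no weak-type $(1,1)$ bound for the strong maximal function. Fix $f\in L^\infty$. Since $\MaxAvg_{\D\times\D} f\le \|f\|_\infty$, we may normalise and work with the dyadic levels $2^k$, $k\le k_0$. For each $k$ let $\mathcal R_k$ be the collection of dyadic rectangles $R$ with $|\langle f\rangle_R|>2^k$, and set $\Omega_k=\bigcup_{R\in\mathcal R_k}R$. We will select a subfamily $\Ss_k\subseteq\mathcal R_k$ and put $\Ss=\bigcup_k\Ss_k$. For every $R\in\mathcal R_k$ we want
\[
|\{x\in R:\ \MaxAvg_{\Ss} f(x)>2^{k-1}\}|>\tfrac12|R|,
\]
and we want $\Ss$ to be sparse.

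Granting this, the theorem follows at once. Take $x$ and a rectangle $R\ni x$ with $|\langle f\rangle_R|>2^k$, where $2^k$ is close to $\MaxAvg_{\D\times\D}f(x)$ up to a factor $2$. Then $\per^{1/2}_R(\MaxAvg_\Ss f)\ge 2^{k-1}$, so $\MaxPer^{1/2}_{\D\times\D}(\MaxAvg_\Ss f)(x)\ge 2^{k-1}$. To reach the constant $2$ exactly, run the argument with thresholds $\lambda$ taken from a continuum rather than from the powers of two, i.e.\ $\lambda=\MaxAvg f(x)-\varepsilon$, and let $\varepsilon\to0$.

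The selection is greedy, from large rectangles to small. Since $f$ is bounded, the rectangles in $\mathcal R_k$ have bounded measure. Order them by decreasing measure and use a Córdoba–Fefferman-style selection. A rectangle $R$ is added to $\Ss_k$ only if
\[
\Big|R\cap\bigcup_{R'\in\Ss_k\text{ already chosen}}R'\Big|\le\tfrac12|R|.
\]
If $R$ is rejected, then more than half of $R$ lies in selected rectangles $R'$ with $|\langle f\rangle_{R'}|>2^k$, so $\MaxAvg_\Ss f>2^k$ on more than half of $R$. If $R$ is selected, the same holds trivially on all of $R$. This gives the median bound, even at level $2^k$ rather than $2^{k-1}$.

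The main obstacle is sparseness. Within a single level the selection rule already yields disjoint portions $E_R$ of size at least $\tfrac12|R|$. Across levels, however, rectangles of $\Ss_{k+1}$ may overlap heavily with those of $\Ss_k$, and we have no weak-type bound to control $|\Omega_{k+1}|$ relative to $|\Omega_k|$. My first attempt is to use the Carleson-packing characterisation of sparseness, $\sum_{R\in\Ss, R\subseteq U}|R|\lesssim|U|$ for all open $U$, which is equivalent to sparseness for rectangles via a duality/max-flow argument (Hänninen), and to bound it level by level. To make the levels packable, I would select across all $k$ simultaneously: process rectangles in decreasing order of the level $k$, and within each level by decreasing measure. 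A rectangle is kept only if at most half of it is covered by previously kept rectangles, from any level. A rejected $R\in\mathcal R_k$ is then covered more than halfway by kept rectangles of level at least as large, hence carrying averages above $2^k$, so the median estimate survives. For kept rectangles, the selection rule directly produces the disjoint sets $E_R=R\setminus\bigcup(\text{earlier kept})$ with $|E_R|\ge\tfrac12|R|$. The delicate point is that countably many rectangles must be processed in a valid order: larger levels first, and a well-ordering by size within each level. I would handle this by restricting to rectangles meeting a large ball and of size at least $2^{-N}$, and then passing to the limit $N\to\infty$ via monotonicity.
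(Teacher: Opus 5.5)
Your final construction is exactly the paper's argument, namely Theorem~\ref{rem:SparseDomGeneralSets} applied to $\mathcal{E}=\D\times\D$. You process the levels from the largest average downward and keep a rectangle only if at most half of it is covered by the rectangles kept so far. Your sparseness check via $E_R=R\setminus\bigcup\{\text{earlier kept rectangles}\}$ is correct, and so is your median bound for rejected rectangles. Two auxiliary steps, however, fail as written and should be dropped rather than repaired.

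\textbf{The processing order.} You single this out as the delicate point.
\begin{itemize}
\item The claim that boundedness of $f$ forces the rectangles in $\mathcal R_k$ to have bounded measure is false: for $f\equiv 1$ every dyadic rectangle has average $1$. So ordering a level by decreasing measure is in general not a well-order.
\item Your remedy, truncating and then letting $N\to\infty$ ``via monotonicity'', has no monotonicity to use. Inserting new rectangles into the queue can make a rectangle kept at stage $N$ rejected at stage $N+1$. So the selected families are not nested, and a limit of them need not retain the half-covering property on which the median estimate rests.
\end{itemize}
The missing observation is that none of this matters. Neither the disjointness of the $E_R$ nor the domination uses the order \emph{within} a level. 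All that is needed is that every rectangle of a higher level is processed before every rectangle of a lower level. The paper therefore enumerates each layer $\mathcal{E}_m=\{R:\,|\langle f\rangle_R|\in(2^{-m},2^{-m+1}]\}$ arbitrarily as $R_{m,1},R_{m,2},\dots$. It then runs the selection along the lexicographic order on the pairs $(m,i)$, which is a well-order. The recursion is thus well defined without any truncation or limit.

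\textbf{The constant.} Taking thresholds $\lambda=\MaxAvg f(x)-\varepsilon$ would make $\Ss$ depend on $x$, which the statement does not allow. It is also unnecessary. A rectangle rejected at its top level $k$ has $2^k<|\langle f\rangle_R|\le 2^{k+1}$. It is also more than half covered by kept rectangles with averages above $2^k$. Hence $\per^{1/2}_R(\MaxAvg_\Ss f)\ge 2^k\ge\frac12|\langle f\rangle_R|$, which already gives the constant $2$. Moreover, geometric levels $\rho^k$ with $1<\rho<2$ and the same selection give the constant $\rho$, still with an $x$-independent family.
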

The statement will be deduced from a general procedure, reminiscent of \cite{CorFeff75}, which allows one to always extract a sparse subfamily $\Ss$ from any countable family of sets $\mathcal{E}$. Theorem \ref{th:MaxStrong} is the first biparametric sparse domination result, and is in sharp contrast with the negative result from \cite{BCOR19}, which proved that the
non-cancellative sparse bound fails for the strong maximal function.

\medskip

In the second part of the paper, we turn our attention to cancellative estimates in the Euclidean setting. We restrict our attention to Calder\'on-Zygmund theory, although we expect that our ideas can be pushed to many other operators of interest which exhibit similar behavior near the $\Hardy^1 \to L^1$ endpoint, or to the setting of \cite{BFP16}. A Calder\'on-Zygmund operator is an $L^2$-bounded operator $T$ with an associated kernel $K$. We assume that $K$ satisfies a smoothness estimate of order $s>0$. When $0<s<1$, it reduces to the classical H\"older regularity of the kernel, while higher values of $s$ involve higher-order differentiability properties for $K$. The details are postponed to Section \ref{sec:sec3}. This assumption is by no means the weakest possible pointwise smoothness condition one can impose on $K$ for $L^p$-boundedness results in the Banach range, but it is important for obtaining boundedness results in $\Hardy^p(\R^d)$ for $0<p<1$. Calderón-Zygmund operators $T$ with kernels satisfying the smoothness assumptions that we use in this paper are known to be representable as averages of Haar shifts. However, we will justify in the body of the paper that said representation cannot be combined with Theorem \ref{th:theoremA} to get a sparse domination result for $T$. Instead, the role of $\MaxAvg_\D$ in the continuous theory is played by a different, cancellative grand maximal function. Given a cube $Q$, denote by $\mc{F}_s(Q)$ the family of $L^\infty$-normalized $s$-smooth bump functions supported on $Q$. Then we define
$$
\MaxAvg_{Q}^s f(x) = \sup_{x\in R\subseteq Q} \sup_{\varphi \in \mc{F}_s(R)} \frac{1}{|R|} \absB{\int_R f(y)\varphi(y) \dd y },
$$
which recovers the local Hardy--Littlewood maximal operator when $s\to 0$, as the supremum is in this case attained for $\varphi = \mathrm{sgn}(f) \cdot \ind_R$. The role of the above maximal function in the continuous setting is the same as that of the localized dyadic maximal function in the dyadic context. Our sparse domination result for Calder\'on-Zygmund operators reads as follows:

\begin{ltheorem}\label{th:theoremC}
Let $s >0$ and $T$ be an $s$-smooth Calder\'on-Zygmund operator. Let $f \in L^\infty_c(\R^d)$ be supported on a cube $Q_0 \subseteq \R^d$. Then there exists $r\in (0,1)$, only depending on $d$, and a $\tfrac12$-sparse family $\Ss\subseteq\D(Q_0)$ such that 
$$
|Tf(x)|\lesssim \sum_{Q\in \Ss}\per_Q^{r}(\MaxAvg_{3Q}^s(f)) \ind_Q(x), \qquad x \in Q_0.
$$
\end{ltheorem}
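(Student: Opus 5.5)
The plan is to run Lerner's local stopping-time argument, with the weak-type $(1,1)$ bound for the Hardy--Littlewood maximal function replaced by a local percentile estimate for the grand maximal truncation. Define the localized maximal truncation
$$
T^\#_{Q}f(x) = \sup_{x\in P\subseteq Q,\ P \in \D(Q)} \Bigl|T(f\ind_{3Q\setminus 3P})(x)\Bigr| \quad(\text{suitably sup over } x\in P).
$$
The core of the argument is a key local inequality. For every dyadic cube $Q$, a point $x\in Q$ and a cube $P\ni x$,
\begin{equation*}
\per_Q^{Cr}\bigl(|T(f\ind_{3Q})|\bigr) + \per_Q^{Cr}\bigl(T^\#_Q f\bigr) \lesssim \per_Q^r\bigl(\MaxAvg^s_{3Q} f\bigr),
\end{equation*}
for some fixed $C>1$ and all small $r$.

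To prove this, set $\lambda=\per_Q^r(\MaxAvg^s_{3Q}f)$ and $\Omega=\{x\in 3Q: \MaxAvg^s_{3Q}f>\lambda\}$. Then $|\Omega\cap Q|\le r|Q|$, and a comparable bound holds on $3Q$ after adjusting $r$ with dimensional constants. Perform a Whitney decomposition of $\Omega$ into cubes $W_j$. The function $f\ind_{3Q}$ then splits as $g+\sum_j b_j$ in the spirit of an atomic or Calderón--Zygmund decomposition adapted to the grand maximal function, as in Fefferman--Stein's Hardy space theory. Here $b_j=(f-P_j)\psi_j$, with $\psi_j$ a smooth partition of unity and $P_j$ polynomials of degree $<s$. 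This makes each $b_j$ have vanishing moments up to order $\lfloor s\rfloor$. The hardest part is the estimate $|P_j\psi_j|\lesssim\lambda$, which follows from the fact that a point outside $\Omega$ lies near each $W_j$, so the smooth averages of $f$ against normalized bumps on $\sim W_j$ are $\le\lambda$. This is exactly why $\MaxAvg^s$ tests against $s$-smooth bumps. Then $\|g\|_\infty\lesssim\lambda$, so $\|Tg\|_{L^2}^2\lesssim\lambda^2|Q|$ by $L^2$ boundedness, and Chebyshev controls the measure where $|Tg|>K\lambda$. Off $\bigcup 2W_j$, the $s$-smoothness of the kernel and the moment cancellation give $\int_{(2W_j)^c}|Tb_j|\lesssim\lambda|W_j|$, using only bounds on $b_j$ against bumps rather than $\|b_j\|_1$ (which is unavailable). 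Summing gives $\lesssim\lambda r|Q|$, and Chebyshev again yields a level set of measure $\lesssim (r+K^{-1})|Q|$. Choosing $K$ large and $r$ small yields the percentile bound. The truncation $T^\#_Q$ is handled identically, with the usual Cotlar-type pointwise comparison of the truncated operator with the maximal averages of $Tg$ plus a term $\lesssim\lambda$ coming from kernel size. For $s\ge1$ the moment cancellation requires the higher-order kernel smoothness. I expect this decomposition to be the main obstacle: keeping $g$ bounded by $\lambda$ and the $b_j$ controlled only through $\MaxAvg^s$.

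The recursion then proceeds by Lerner's scheme on $Q_0$. Let $E=\{x\in Q_0: \MaxAvg^s_{3Q_0}f > \per^r_{Q_0}(\ldots)\}\cup\{T^\#_{Q_0}f>A\lambda\}\cup\{|T(f\ind_{3Q_0})|>A\lambda\}$. By the key inequality, with $A$ large, $|E|\le\tfrac12|Q_0|$. Take the maximal dyadic cubes $P_j\subseteq Q_0$ with $|P_j\cap E|>\tfrac12|P_j|$ (Calderón--Zygmund at level $\tfrac12$). These satisfy $\sum|P_j|\le 2|E|$, which we arrange to be at most $\tfrac12|Q_0|$, and a.e.\ point of $E$ lies in $\bigcup P_j$. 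On $Q_0\setminus\bigcup P_j$ we have $|Tf|\le A\lambda$. On $P_j$ we write $Tf= T(f\ind_{3Q_0\setminus 3P_j}) + T(f\ind_{3P_j})$. Since $f$ is supported on $Q_0$, $\ind_{3Q_0}f=f$. The first term is bounded by $\inf_{P_j}$-type control via $T^\#_{Q_0}$, because $P_j\not\subseteq E$ up to half its measure by maximality of the parent. More precisely, the parent $\hat P_j$ satisfies $|\hat P_j\cap E|\le\tfrac12|\hat P_j|$, so $\hat P_j$ contains points outside $E$, and for the truncation sup over $x\in\hat P_j$ this gives a bound by $A\lambda$. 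Iterating on each $P_j$ with $f\ind_{3P_j}$ and the enlarged cube, we collect $\Ss$ as all stopping cubes. The sets $E_Q=Q\setminus\bigcup P_j$ have measure $\ge\tfrac12|Q|$, so $\Ss$ is $\tfrac12$-sparse. Finally, $\MaxAvg^s_{3P}(f\ind_{3P})\le \MaxAvg^s_{3P}f$, so the pointwise bound $|Tf|\lesssim\sum_{Q\in\Ss}\per^r_Q(\MaxAvg^s_{3Q}f)\ind_Q$ follows on $Q_0$.
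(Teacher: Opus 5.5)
\textbf{There is a genuine gap: every sharp cutoff of $f$ in your scheme destroys the cancellation that $\MaxAvg^s$ measures.} Your local estimate for $T(f\ind_{3Q_0})=Tf$ is essentially the paper's Proposition~\ref{prop:estonTF}; the paper only needs $\int b_j=0$, because the smoothness is carried by the test class $\mc F_s$. The trouble is at each place where you multiply $f$ by a sharp indicator: $\ind_{3Q\setminus 3P}$ in $T^\#_Q$, $\ind_{3P_j}$ in the recursion, and $\ind_{3\hat P_j\setminus 3P_j}$ in the parent step. Control by $\MaxAvg^s$ survives multiplication by smooth bumps only. A sharp cutoff can slice a mean-zero piece of $f$ in half, and the remainder is then controlled only by its $L^1$ norm, which is exactly the quantity you correctly note is unavailable.

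Concretely, your key inequality for $T^\#_Q$ is false. Take $d=1$, $T$ the Hilbert transform, $Q_0=[-1,1)$, $0<s\le 1$ and $f=\ind_{[0,\rho)}-\ind_{[-\rho,0)}$.
\begin{enumerate}[(i)]
\item For $x\in[\rho,1)$ and $P=[2^{-j},2^{1-j})\ni x$ one has $3P=[0,3\cdot 2^{-j})\supseteq[0,\rho)$. So $f\ind_{3Q_0\setminus 3P}=-\ind_{[-\rho,0)}$ and $T^\#_{Q_0}f(x)\ge\frac1\pi\log(1+\rho/x)\gtrsim\rho$; symmetrically for $x\le-\rho$. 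Hence $\per^{Cr}_{Q_0}(T^\#_{Q_0}f)\gtrsim\rho$.
\item Since $\int f=0$, one has $\MaxAvg^s_{3Q_0}f(x)\lesssim(\rho/|x|)^{1+s}$ for $|x|\ge 2\rho$. Hence $\per^r_{Q_0}(\MaxAvg^s_{3Q_0}f)\lesssim(\rho/r)^{1+s}$.
\end{enumerate}
Letting $\rho\to0$ breaks the inequality; for $s>1$ use a bump with more vanishing moments.

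The same mechanism defeats the recursion. Suppose a mean-zero bump of width $\rho$ straddles $\partial(3P_j)$. For the Hilbert transform, $|T(f\ind_{3P_j})|\approx\rho/\ell(P_j)$ on $P_j$. Yet $\MaxAvg^s_{3P_j}f\lesssim(\rho/\ell(P_j))^{1+s}$ there, because every test function supported in a cube inside $3P_j$ vanishes to order $s$ on $\partial(3P_j)$. So your Calder\'on--Zygmund argument tacitly needs $f$ to vanish near $\partial(3Q)$. That holds for $Q_0$ but not for stopping cubes.

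The parent step fails for the same reason. Both Cotlar's inequality and the kernel-size bound for $T(f\ind_{3\hat P_j\setminus 3P_j})$ produce $Mf$ or $\langle|f|\rangle_{3\hat P_j}$, not $\lambda$. You only need the parent because your threshold $\frac12$ allows $P_j\subseteq E$; a threshold like $2^{-d-1}$ avoids that. Tellingly, you never use the $x$-regularity $\varepsilon$ of $K$ or the mixed condition in Definition~\ref{def:CZO}.

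\textbf{How the paper avoids this.} It uses no sharp truncations at all.
\begin{enumerate}[(i)]
\item It sets $f_Q:=T(f\psi_{2Q})$, with $\psi_{2Q}$ smooth, $\equiv1$ on $2Q$, and supported strictly inside $3Q$. Then $\MaxAvg^s_{3Q}(f\psi_{2Q})\lesssim\MaxAvg^s_{3Q}f$, and Proposition~\ref{prop:estonTF} controls $\per^{2r}_Q(|f_Q|)$.
\item That proof only uses $|\Omega|\le|3Q|$. Your claim that $\Omega$ is small on all of $3Q$ is false, since the percentile over $Q$ says nothing about $3Q\setminus Q$, but it is also unnecessary.
\item It never estimates the \emph{size} of a truncated operator, only the \emph{oscillation} $f_{Q,R}(x')-f_{Q,R}(x'')=\int(K(x',y)-K(x'',y))(\psi_{2Q}-\psi_{2R})(y)f(y)\dd y$ for $x',x''\in R$. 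Split $\psi_{2Q}-\psi_{2R}$ into smooth annuli at scales $2^k\ell(R)$. The $\varepsilon$-regularity in $x$ and the mixed condition make each piece $2^{-k\varepsilon}$ times an average of $f$ against a multiple of an element of $\mc F_s$ of a cube of side about $2^k\ell(R)$ containing $x$. This gives $\MaxAvg^\#_{T,Q}f\lesssim\MaxAvg^s_{3Q}f$ pointwise on $Q$. A size bound via the same decomposition would only give $\log(\ell(Q)/\ell(R))\,\MaxAvg^s_{3Q}f$.
\item The operator-free principle, Theorem~\ref{th:theorem32Emiel} from \cite{LLO21}, needs only these two percentile inputs. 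So no level-set estimate for any maximal truncation is required.
\end{enumerate}
Rescuing your route would require at least smooth truncations plus a genuinely new cancellative Cotlar-type percentile bound. That is not ``handled identically''.
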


Theorem \ref{th:theoremC}'s statement is almost the same as that of Theorem \ref{th:theoremA}. As in that case, it recovers the $\Hardy^1\to L^1$ endpoint estimate for Calder\'on-Zygmund operators with H\"older smooth kernel. Moreover, one can also recover the boundedness from $\Hardy^p(\R^d)$ to $L^p(\R^d)$ when $p\in (d/(d+s),1]$ from the statement, which coincides exactly with the optimal range of the classical theory, using the $L^p$-boundedness of $\CancSpOp_\Ss$. This complements the known size results from sparse domination theory and can also be compared to the recent wavelet representation results that preserve smoothness, like \cite{HytonenLappas,DiPlinioWickWilliams2023}. The latter are useful to study, for example, Sobolev norms, while our result is better suited for cancellative estimates of Hardy-space type. 

The key technical novelty in our proof is the replacement of the standard determination of the sparse family as the level set of a maximal function --which must be non-cancellative for the method to work-- by a good-$\lambda$ type argument. After that step is performed, a careful application of classical Calder\'on-Zygmund methods allows us to extract the pointwise estimate in the statement. Theorem \ref{th:theoremC} can be proved by combining these ideas with the general sparse domination principle of \cite{LLO21}. 

\medskip

Sparse domination was originally developed as a tool to prove sharp weighted norm inequalities, most notably in connection with the so-called $A_2$-theorem \cite{Hy12,Le13a}. Our cancellative results, Theorems \ref{th:theoremA} and \ref{th:theoremC}, recover the familiar weighted estimates on $L^p(w)$ for $p\in (1,\infty)$ and $w \in A_p$, the class of Muckenhoupt weights. More interestingly, preserving the cancellation of the input functions allows one to consider estimates beyond the reach of previous sparse domination results, namely in the cases $p\leq 1$ and/or $w\notin A_p$. In these regimes, the operators we study are not bounded on $L^p(w)$, but Hardy space estimates are known qualitatively. In the body of the paper, we obtain sharp quantitative $L^p(w)\to L^p(w)$ results for variants of $\CancSpOp_\Ss$ and deduce new sharp weighted inequalities for our operators.

The rest of the paper is organized as follows: Section \ref{sec:sec1} contains our estimates for martingale maximal functions and the proof of Theorems \ref{th:theoremB} and \ref{th:MaxStrong}. The estimates for multiscale dyadic operators, like martingale transforms and Haar shifts, are in Section \ref{sec:sec2}. Section \ref{sec:sec3} is devoted to Calder\'on-Zygmund theory and other Euclidean results. Finally, in Section \ref{sec:sec4} we prove norm estimates for 
the cancellative sparse operators and deduce Hardy space estimates for the operators studied in Sections \ref{sec:sec2} and \ref{sec:sec3}.

\section{General martingales - single-scale operators} \label{sec:sec1}
We start by presenting our first approach to cancellative domination, which works for general martingales --and of course includes the usual dyadic setting in $\R^d$, as we will detail below--. We work in the generality of filtered probability spaces, which includes the dyadic Euclidean setting of the Introduction. Let $(\Omega,\F,\mu)$ be a probability space, equipped with a filtration of $\sigma$-algebras $\{\F_k\}_{k \geq 0}$ such that $\cup_{k\geq 0}\F_k$ generates $\F$. We denote the associated conditional expectations by
$$
f_k := \cexp_k[f] := \cexp[f|\F_k], \qquad f \in L^1(\Omega).
$$
As is standard, we identify an integrable function $f \in L^1(\Omega)$ with the martingale $\{f_k\}_{k\geq 0}$ that converges to it. We also use standard notation for martingale differences:
\begin{align*}
    df_k := \diff_k(f) := \begin{cases}
     \cexp_k[f] - \cexp_{k-1}[f] &\text{if } k \geq 1, \\
     \cexp_0[f] &\text{if } k = 0.
    \end{cases}
\end{align*}
Fix $0<r<1$ and a measurable function $f$. In the martingale setting, the natural analogue of the set-based percentile from the Introduction is a conditional expectation based version. Following \cite{Tomkins75, Tom78}, we define a $k$-th conditional percentile of $f$ at ratio $r$ as any $\F_k$-measurable function $\per_k^rf$ with the following property: for each $A \in \F_k$, the following two inequalities hold:
\begin{align*}
    \mu(\{x \in A:\, f(x) > \per_k^rf(x)\}) &\leq r\mu(A),\\
     \mu(\{x \in A:\, f(x) < \per_k^rf(x)\}) &\leq (1-r)\mu(A).
     \end{align*}
In general, there may be many possible choices for $\per_k^r$. For concreteness, we will choose the following version:
\begin{align*}
  \per_k^r f(x) := \inf\{ t \in \mathbb{Q}:\, \cexp_k[\ind_{f > t}](x) \leq r \}.
\end{align*}
In \cite{Tomkins75, Tom78}, only the case $r=\frac{1}{2}$ is considered, and $\per_k^{\frac{1}{2}} f$ is called the $k$-th conditional median of $f$. By \cite{Tom78} we have the pointwise differentiation theorem
$$
\per_k^r f \to f, \qquad k\to \infty,
$$
for all $0<r<1$. Also, by measurability, we have $\per_k^r \cexp_k [f] = \cexp_k [f]$ if $f \in L^1(\Omega)$. The key to our approach is the maximal function associated to the family $\{\per_k^r\}_{k\geq 0}$:
$$
\MaxPer^r f(x) := \sup_{k \geq 0} \per_k^r |f|(x), \qquad x \in \Omega.
$$
The dyadic version of $\MaxPer^r$ was studied in \cite{MO2011} when $r=\frac12$. In most instances, the parameter $r$ will be fixed from the start and we shall omit it from the notation, writing $\per_k f$ and $\MaxPer f$ instead of $\per_k^r f$ and $\MaxPer^r f$, respectively. We will also be using truncated versions of $\MaxPer$, namely for $\ell\geq 0$
$$
\MaxPer_\ell f(x) := \sup_{k \leq \ell} \per_k^r |f|(x), \qquad x \in \Omega.
$$

\begin{lemma}\label{lem:levelSetMaxPer}
For measurable $f$ we have
  \begin{align*}
    \mu\brb{\{\MaxPer f > \lambda\}} \leq \tfrac{1}{r} \ \mu\brb{\{|f| > \lambda\}}, \qquad \lambda \geq 0.
  \end{align*} 
In particular, for $p \in (0,\infty)$ we have 
$$\|\MaxPer\|_{L^p(\Omega)\to L^p(\Omega)} \leq r^{-\frac{1}{p}}.$$
\end{lemma}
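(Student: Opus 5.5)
We will prove the level-set estimate by a stopping-time argument on the $\F_k$-measurable sets where the conditional percentile first exceeds $\lambda$. Then we integrate in $\lambda$ to get the $L^p$ bound. Fix $\lambda\geq 0$ and set $g=|f|$.

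\textbf{Step 1: disjoint decomposition.} For each $k\geq 0$ define
$$
A_k := \{\per_k^r g > \lambda\}\cap\bigcap_{j<k}\{\per_j^r g \leq \lambda\}.
$$
Since each $\per_j^r g$ is $\F_j$-measurable, $A_k\in\F_k$. The sets $A_k$ are pairwise disjoint, and $\{\MaxPer g>\lambda\}=\bigcup_k A_k$.

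\textbf{Step 2: a lower bound on each $A_k$.} On $A_k$ we have $\per_k^r g>\lambda$. By the definition as an infimum over rationals, there is a rational $t>\lambda$ with $\cexp_k[\ind_{g>t}](x)>r$, hence $\cexp_k[\ind_{g>\lambda}](x)>r$. Indeed, if $\cexp_k[\ind_{g>\lambda}](x)\le r$, monotonicity in $t$ would give $\cexp_k[\ind_{g>t}]\le r$ for every rational $t>\lambda$, forcing $\per_k^r g(x)\le\lambda$. Here one should take care with null sets, using versions of conditional expectations that are a.s. monotone in $t$ over the countable set $\mathbb{Q}$. Integrating over $A_k\in\F_k$ gives
$$
r\,\mu(A_k) \leq \int_{A_k}\cexp_k[\ind_{g>\lambda}]\dd\mu = \mu(A_k\cap\{g>\lambda\}).
$$
Equivalently, the defining property of the conditional percentile, read on $A_k$ where $\per_k^r g>\lambda$, gives $\mu(\{x\in A_k: g\leq\lambda\})\le \mu(\{g<\per_k^r g\}\cap A_k)\le(1-r)\mu(A_k)$. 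Either route works.

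\textbf{Step 3: sum over $k$.} By disjointness of the $A_k$,
$$
\mu\brb{\{\MaxPer f>\lambda\}} = \sum_k\mu(A_k) \le \tfrac1r\sum_k\mu(A_k\cap\{|f|>\lambda\}) \le \tfrac1r\,\mu(\{|f|>\lambda\}).
$$

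\textbf{Step 4: the $L^p$ bound.} Write $\|\MaxPer f\|_p^p=\int_0^\infty p\lambda^{p-1}\mu(\{\MaxPer f>\lambda\})\dd\lambda$, insert the estimate from Step 3, and recognize $\tfrac1r\|f\|_p^p$. Taking $p$-th roots gives $\|\MaxPer\|_{L^p\to L^p}\le r^{-1/p}$.

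\textbf{Main obstacle.} The only delicate point is Step 2: passing from the rational infimum definition to the strict inequality $\cexp_k[\ind_{|f|>\lambda}]>r$ on $A_k$, up to null sets. Monotonicity and a countable union of exceptional sets handle it.
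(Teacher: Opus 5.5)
Your proof is correct and follows the paper's argument. Your sets $A_k$ are exactly the paper's $\Omega_\ell=\{\MaxPer_\ell f>\lambda\}\cap\{\MaxPer_{\ell-1}f\le\lambda\}$, your second route in Step 2 is precisely the paper's use of the defining percentile inequality on an $\F_\ell$-measurable set, and the summation and layer-cake steps match. Your first route in Step 2, which derives $\cexp_k[\ind_{|f|>\lambda}]>r$ on $A_k$ directly from the rational-infimum formula, is a harmless and slightly more self-contained variant.
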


\begin{proof}
  Without loss of generality, assume that $f \geq 0$. Define
    $$
    \Omega_\ell :=\{ \MaxPer_\ell f > \lambda\}\cap \{\MaxPer_{\ell-1} f \leq \lambda \}, \qquad \ell \geq 0,
    $$
    using the convention $\MaxPer_{-1} f=0$.
  Since $\Omega_\ell \in \F_\ell$,  we have
$$
\mu\brb{\{x \in \Omega_\ell:\, f(x) \leq \lambda\}} \leq \mu\brb{\{x \in \Omega_\ell:\, f(x) < \per_\ell f(x)\}}\leq  (1-r)\mu(\Omega_\ell),
$$
and thus 
$$
\mu\brb{\{x \in \Omega_\ell:\, f(x) > \lambda\}} \geq   r\mu(\Omega_\ell).
$$
Therefore, we can compute   
\begin{align*}
    \mu\brb{\{\MaxPer f > \lambda\}} &= \sum_{\ell \geq 0} \mu(\Omega_\ell) 
    \leq \sum_{\ell \geq 0} \frac{1}{r}\ \mu\brb{\{x \in \Omega_\ell: f(x) > \lambda\}} \\
    &\leq \frac{1}{r}\ \mu\brb{\{f > \lambda\}},
\end{align*}
as we wanted. The second claim follows via the layer cake formula.
\end{proof}

We now turn to the main goal of this section. The percentile maximal function $\MaxPer$ is the enlargement that enables cancellative sparse domination of Doob's maximal function, which for integrable $f$ is defined as
$$
\MaxAvg f := \sup_{k\geq 0} \left|\cexp_k[f]\right|.
$$
As with $\MaxPer$, we also define the truncated version by
$$
\MaxAvg^\ell f  = \sup_{0\leq k \leq \ell} \left|\cexp_k[f]\right|.
$$
A sequence of sets $\{A_k\}_{k\geq 0}$ is said to be adapted to the filtration if $A_k \in \F_k$ for each $k$. For our definition of sparsity, we take inspiration from \cite{DPS23}. 
\begin{definition}\label{def:sparsemart}
    For $0 < \eta \leq 1$, an adapted sequence $\Ss=\{S_k\}_{k\geq 0}$ is said to be $\eta$-sparse if for every $k \geq 0$ and every $\F_k$-measurable set $A \subseteq S_k$ we have 
\begin{align*}
\mu\Big(A \setminus \bigcup_{m \geq k+1} S_m \Big) \geq \eta \mu(A).
\end{align*}
\end{definition} 
For brevity, we refer to $\frac{1}{2}$-sparse sequences as sparse. We can give an alternative definition of $\eta$-sparsity using conditional expectations. Indeed, define $S_{\geq k} := \bigcup_{m \geq k} S_m$. Then $\Ss$ is $\eta$-sparse if and only if for every $k \geq 0$
\begin{align*}
    \cexp_k[\ind_{S_{\geq k+1}^c}] &\geq \eta  &&\mu\text{-a.e. on }S_k,
    \intertext{or equivalently if }
    \cexp_k[\ind_{S_{\geq k+1}}] &\leq 1-\eta &&\mu\text{-a.e. on }S_k.
\end{align*}

 We next show that our definition is equivalent to that of Definition 2 in Section 3 of \cite{DPS23}. There, an increasing sequence $\{\nu_k\}_{k\geq 0}$ of stopping times is called sparse if the sets $E_k = \{\nu_k < \infty\}$ satisfy the following: for every  $A \subseteq E_k$ such that $A \in \F_{\nu_k}$, 
$$
\mu(A \cap E_{k+1}) \leq \tfrac{1}{2} \mu(A).
$$
This is equivalent to saying that for each $k$
\begin{align} \label{sparse_seq_of_sts}
    \cexp[\ind_{E_{k+1}} \, |\, \F_{\nu_k}] \leq \tfrac{1}{2} \quad \text{$\mu$-a.e. on $E_k$}.
\end{align}
From this, it is clear that if for a sparse sequence $\{S_k\}_{k\geq 0}$ we define
\begin{align*}
  \nu_0(x) &:= \inf \{m\geq 0:\, x \in S_m\}, &&x \in \Omega,\\
  \nu_{k}(x) &:= \inf \{ m > \nu_{k-1}(x):\, x \in S_m \} &&x \in \Omega, \,k \geq 1,
\end{align*}
then $\{\nu_k\}_{k\geq 0}$ is an increasing, sparse sequence of stopping times. The converse implication is contained in the following lemma.

\begin{lemma}\label{lem:defsOfSparse}
  Let $\{\nu_k\}_{k\geq 0}$ be a sparse sequence of stopping times. Define
  \begin{align*}
    S_m := \bigcup_{k \geq 0} \{\nu_k = m\}.
  \end{align*}
  Then, $\{S_m\}_{m\geq 0}$ is sparse.
\end{lemma}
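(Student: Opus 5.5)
The plan is to check the two defining properties of Definition~\ref{def:sparsemart} (with $\eta=\tfrac12$) directly: adaptedness, and the sparsity inequality. Each $\F_m$-measurable subset of $S_m$ will be split according to which stopping time $\nu_k$ equals $m$ there, and the sparse condition for $\{\nu_k\}$ will be applied piece by piece. Adaptedness is immediate: each $\{\nu_k=m\}$ belongs to $\F_m$ because $\nu_k$ is a stopping time. Hence $S_m\in\F_m$.

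For sparsity, fix $m\ge 0$ and an $\F_m$-measurable set $A\subseteq S_m$. We must show
$$
\mu\Big(A\cap \bigcup_{m'\ge m+1}S_{m'}\Big)\le \tfrac12\mu(A).
$$
Since the $\nu_k$ need only be nondecreasing, I first disjointify:
$$
A_k := A\cap\{\nu_k=m\}\setminus\bigcup_{i<k}\{\nu_i=m\}, \qquad k\ge 0.
$$
The sets $A_k$ are pairwise disjoint and their union is $A$. Each $A_k$ lies in $\F_{\nu_k}$. To see this, note that $A_k\cap\{\nu_k=j\}$ is empty for $j\ne m$. For $j=m$ it equals $A_k$, which is in $\F_m$ because $A$ and all the sets $\{\nu_i=m\}$ are in $\F_m$. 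Moreover $A_k\subseteq\{\nu_k<\infty\}=E_k$.

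The key observation is the inclusion $A_k\cap S_{\ge m+1}\subseteq A_k\cap E_{k+1}$. Take $x\in A_k$ with $x\in S_{m'}$ for some $m'>m$, so that $\nu_j(x)=m'$ for some $j$. If $j\le k$, monotonicity would give $\nu_j(x)\le\nu_k(x)=m<m'$, which is impossible. Hence $j\ge k+1$, and so $\nu_{k+1}(x)\le\nu_j(x)<\infty$, that is, $x\in E_{k+1}$. Applying the sparse condition for stopping times to $A_k\in\F_{\nu_k}$, $A_k\subseteq E_k$, gives
$$
\mu(A_k\cap S_{\ge m+1})\le\mu(A_k\cap E_{k+1})\le\tfrac12\mu(A_k).
$$
Summing over $k$ and using disjointness yields the claim.

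The only delicate point is the measurability bookkeeping: verifying $A_k\in\F_{\nu_k}$, and the disjointification needed when the $\nu_k$ are not strictly increasing. Both are handled as above, and otherwise the argument is a direct unwinding of the definitions.
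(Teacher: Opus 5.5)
Your proof is correct and follows essentially the same route as the paper. Your sets $A_k$ are exactly $A\cap B_k$ for the paper's partition $B_k=\{\nu_k=m\}\cap\{\nu_{k-1}<m\}$, and your key inclusion $A_k\cap S_{\geq m+1}\subseteq E_{k+1}$ is the paper's \eqref{eq:sparseequiv}. The only difference is cosmetic: you test sparsity directly on $\F_m$-measurable sets $A\subseteq S_m$, whereas the paper writes the same argument as the conditional-expectation bound $\ind_{B_k}\cexp_m[\ind_{S_{\geq m+1}}]\leq \tfrac12\ind_{B_k}$.
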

\begin{proof}
  Fix $m\geq 0$ and partition $S_m$ into the disjoint subsets
  $$
B_k:= \cbrace{\nu_k=m}\cap \cbrace{\nu_{k-1}<m}, \qquad k\geq 0
  $$
  using the convention $\nu_{-1}=-1$. We note that
    \begin{align}\label{eq:sparseequiv}
    B_k \cap S_{\geq m+1} &\subseteq \{\nu_{k+1} < \infty\} =E_{k+1},
  \end{align}
since for $x \in B_k$ we have $\nu_j(x) \leq  m$ for all $j \leq  k$, so for $x \in B_k\cap S_{\geq m+1}$ there must be a $j > k$ such that $\nu_j(x) < \infty$ and hence $\nu_{k+1}(x)<\infty$. 

We need to show
  \begin{align*}
    \cexp_m[\ind_{S_{\geq m+1}}] \leq \tfrac{1}{2} \qquad \text{$\mu$-a.e. on $S_m$},
  \end{align*}
for which it suffices to show that $\cexp_m[\ind_{S_{\geq m+1}}] \leq \frac{1}{2}$ $\mu$-a.e. on $B_k$ for every $k\geq 0$. This follows from
  \begin{align*}
    \ind_{B_k} \cexp_m[\ind_{S_{\geq m+1}}] &\leq \ind_{B_k}\cexp_{m}[\ind_{E_{k+1}}] \\
    &= \ind_{B_k}\cexp[\ind_{E_{k+1}} \,|\, \mathcal{F}_{\nu_k}] \leq \tfrac{1}{2} \ind_{B_k},
  \end{align*}
  where we have used that $B_k \in \mc{F}_m$ and \eqref{eq:sparseequiv} in the first step, $B_k \subseteq \{\nu_k = m\}$ in the second step and sparsity of $\{\nu_k\}_{k\geq 0}$ in the third step.  
\end{proof}

The preceding discussion shows that the notion of sparsity for adapted sequences $\cbrace{S_k}_{k\geq 0}$ is equivalent to the sparsity of the associated sequence of stopping times $\cbrace{\nu_k}_{k\geq 0}$ in the sense of \cite{DPS23}. We will therefore move freely between these two viewpoints, depending on which is more convenient in a given argument. Given an $\eta$-sparse sequence $\Ss=\cbrace{S_k}_{k\geq 0}$, we define the corresponding sparse maximal operator and sparse operator by
\begin{align*}
    \MaxAvg_\Ss f(x) &:= \sup_{k\geq 0} \ind_{S_k}(x) |\cexp_k [f](x)|, &&x \in \Omega,\\
    \SpOp_\Ss f(x) &:= \sum_{k\geq 0} \ind_{S_k}(x) |\cexp_k [f](x)|, &&x \in \Omega.
\end{align*}
Sparse operators are defined in terms of $\ell^1$-sums, while sparse maximal operators are $\ell^\infty$-type quantities. It is immediate that 
$$
\MaxAvg_\Ss f(x) \leq \SpOp_\Ss f(x),\qquad x \in \Omega.
$$
The following result is the key to our approach.

\begin{proposition}\label{prop:keyIneqMaximals}
    Let $f \in L^\infty(\Omega)$. There exists a sparse sequence $\Ss$ such that 
    $$
    \MaxAvg f  \leq 2\, \MaxPer^{\frac12}\br{\MaxAvg_\Ss f }. 
    $$
\end{proposition}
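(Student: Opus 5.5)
The plan is to build $\Ss$ by a backward, pruned stopping construction on a finite horizon. Then pass to the limit using boundedness of $f$ and monotonicity in the horizon. Fix $L\geq 0$. It suffices to produce a sparse $\Ss^L=\{S_m\}_{0\le m\le L}$ with
$$
\abs{\cexp_m f}\le 2\,\per_m^{\frac12}\br{\MaxAvg_{\Ss^L}f}\qquad\text{for all } m\le L .
$$
This gives $\MaxAvg^L f\le 2\,\MaxPer^{\frac12}(\MaxAvg_{\Ss^L}f)$. Letting $L\to\infty$ requires a compactness or diagonal choice of the sequences; since $f\in L^\infty$, only a finitely constrained amount of information is relevant at each level. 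The guiding principle is a dichotomy on each $\F_m$-atom-like region. Either the future stopping sets already carry values $\gtrsim \abs{f_m}$ on more than half of the region, and then the conditional median of $\MaxAvg_\Ss f$ beats $\abs{f_m}/2$ without adding anything. Or they do not, and then we may add the region to $S_m$ without violating sparsity, because $\MaxAvg_\Ss f\ge\abs{f_m}$ on it.

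Concretely, I would define $S_L:=\Omega$ and proceed by backward induction. Suppose $S_{m+1},\dots,S_L$ are chosen, and set $M^{(m+1)}:=\max_{m<j\le L}\ind_{S_j}\abs{f_j}$. Put
$$
G_m:=\cbrace{M^{(m+1)}\ge \tfrac12\abs{f_m}},\qquad S_m:=\cbrace{\cexp_m[\ind_{G_m}]\le\tfrac12}\in\F_m .
$$
Off $S_m$ we have $\cexp_m[\ind_{G_m}]>\tfrac12$. By the definition of $\per_m^{\frac12}$ (using that $\abs{f_m}$ is $\F_m$-measurable), this yields $\per_m^{\frac12}(M^{(m+1)})\ge \tfrac12\abs{f_m}$ there. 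On $S_m$ we trivially have $\MaxAvg_\Ss f\ge\abs{f_m}$, hence $\per_m^{\frac12}(\MaxAvg_\Ss f)\ge\abs{f_m}$. To restore sparsity at $S_m$, I would then prune the future: inside $S_m$, delete from each $S_j$, $j>m$, the points where $\abs{f_j}<\tfrac12\abs{f_m}$. After pruning, $S_{\ge m+1}\cap S_m\subseteq G_m$, so $\cexp_m[\ind_{S_{\ge m+1}}]\le\tfrac12$ on $S_m$. Sparsity for the later indices $j>m$ is preserved, because pruning only shrinks the future sets.

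The main obstacle is that pruning at stage $m$ can destroy the median guarantee already established at an intermediate time $m'\in(m,j)$. The deleted points had value $<\tfrac12\abs{f_m}$, which bears no a priori relation to $\abs{f_{m'}}$. I expect this to be the delicate step. The first fix I would try is to make the construction forward in the values: prune only points whose value is dominated by a value already guaranteed on a larger stopping set containing them. Equivalently, I would organize the selection through the stopping times $\nu_k$ (using Lemma~\ref{lem:defsOfSparse} to convert back to an adapted sparse sequence). Then each removed point still has $\MaxAvg_\Ss f$ at least the value that certified the median at $m'$; the factor $2$ in the statement is exactly the room for one halving. If that bookkeeping fails, the fallback is to replace the thresholds $\tfrac12\abs{f_m}$ by the running maximum $\MaxAvg^m f$. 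This makes the thresholds monotone along each path, so later pruning can never undercut an earlier certificate.
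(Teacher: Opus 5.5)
The genuine gap is the passage $L\to\infty$. The proposition asks for a single sparse $\Ss$ controlling the full Doob maximal function. Your construction runs backward from the horizon, so $\Ss^L$ and $\Ss^{L'}$ are unrelated, and boundedness of $f$ constrains values, not times. For a general filtration $\F_m$ need not be atomic, so there is no compactness for adapted families of sets: weak-$*$ limits of indicators are not indicators. Even for the dyadic filtration, where a diagonal argument does stabilise each $S_m$, the median certificates can be carried entirely by sets near the horizon, and these escape to infinity. Concretely, let $f=\ind_{[0,1/2)}-\ind_{[1/2,1)}$ on $[0,1)$ with the dyadic filtration, so $f_0=0$ and $f_j=f$ for $j\geq 1$. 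Your recipe gives $M^{(m+1)}\equiv 1$, hence $G_m=\Omega$ and $S_m=\emptyset$ for every $m<L$, so $\Ss^L$ consists of $S_L=\Omega$ alone. Every limit as $L\to\infty$ is the empty sequence, for which $\MaxAvg_{\Ss}f\equiv 0$ while $\MaxAvg f\equiv 1$. So no compactness or diagonal choice can complete this construction; the horizon has to be removed by a different idea.

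On the other hand, the step you single out as the main obstacle is harmless, and no reorganisation is needed there. At stage $m$ you only delete the contribution $\ind_{S_j}\abs{f_j}$ at points of the new set $S_m$ where $\abs{f_m}>2\abs{f_j}$. Hence $g^{(m)}:=\max_{m\leq j\leq L}\ind_{S_j}\abs{f_j}$, computed with the sets at the end of stage $m$, satisfies $g^{(m)}\geq g^{(m+1)}$ pointwise, and so the final $\MaxAvg_{\Ss^L}f\geq g^{(m')}$ for every $m'$. Each certificate at time $m'$ was obtained for a function dominated by $g^{(m')}$: namely $M^{(m'+1)}$ off the original $S_{m'}$, and $\ind_{S_{m'}}\abs{f_{m'}}$ on the original $S_{m'}\in\F_{m'}$. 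Since $\per^{\frac12}_{m'}$ is monotone, all certificates survive. The deleted pieces $S_m\cap\{\abs{f_j}<\frac12\abs{f_m}\}$ lie in $\F_j$, and the inclusion $S_m\cap S_{\geq m+1}\subseteq G_m$ persists under later shrinking, so adaptedness and sparsity survive too. Thus your finite-horizon bound $\abs{f_m}\leq 2\per_m^{\frac12}(\MaxAvg_{\Ss^L}f)$, $m\leq L$, is correct. By monotone convergence it would already suffice for the $L^1$ estimate in Theorem~\ref{th:theoremB}, but it is not the proposition.

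The paper avoids any horizon by inducting on value levels instead of time. It normalises $\nrm{f}_{L^\infty}=1$, which is where boundedness enters, and starts at level $2^{-1}$. It lets $\Omega_k^\ell$ be the set where $k$ is the first time with $\MaxAvg^kf>2^{-\ell}$, and keeps $S_k^\ell=\{x\in\Omega_k^\ell:\cexp_k[\ind_{S^{\ell-1}}](x)\leq\frac12\}$, where $S^{\ell-1}$ is the union of all sets chosen at higher levels. Higher levels are frozen before lower ones, so nothing is ever pruned and no limit is taken. Then $\abs{f_k(x)}$ is controlled either directly by $\MaxAvg_\Ss f(x)$ or by $\per_m^{\frac12}(\MaxAvg_\Ss f)(x)$ at the first crossing time $m\leq k$, rather than at time $k$ itself.
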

\begin{proof}
  The proof strategy is as follows: we construct the sparse family by adding layers which dominate the different dyadic scales of the range of the maximal operator, starting from the largest and proceeding downward. At each scale, we retain only those sets whose conditional overlap with previous scales is small; points we discard are automatically controlled by $\MaxPer(\MaxAvg_\Ss f)$. For simplicity, we assume that $\|f\|_{L^\infty(\Omega)} = 1$. For each $k \geq 0$ and $\ell\geq 1$ define
  \begin{align*}
    \Omega_k^\ell = \{ x \in \Omega:\, \MaxAvg^k f(x) > 2^{-\ell} \text{ and } \MaxAvg^{k-1} f(x) \leq 2^{-\ell}\},
  \end{align*}
  where we use the convention $\MaxAvg^{-1}f=0$.
    For each $\ell \geq 1$, $\{\Omega_k^\ell\}_{k\geq 0}$ is a partition of $\{\MaxAvg f>2^{-\ell} \}$. Next, for
each $k\geq 0$, we initialize by setting $S_k^1 := \Omega_k^1$. We will define $S_k^{\ell+1}$ inductively for $\ell \geq 1$. Define 
  \begin{align*}
    S^\ell := \bigcup_{\substack{k \geq 0,\, m \leq \ell}} S_k^m
  \end{align*}
and set
  \begin{align*}
    S_k^{\ell+1} &:= \left\{ x \in \Omega_k^{\ell+1}:\, \cexp_k[\ind_{S^\ell}](x) \leq  \tfrac{1}{2} \right\}.
  \end{align*}
By construction, we have 
$$
S_{k}^{\ell+1} \cap \Big[\bigcup_{\substack{n \leq k, \,m \leq \ell}} S_n^m \Big]= \emptyset,
$$
because the second set on the left-hand side of the above equation is $\mathcal{F}_k$-measurable. This means that
$$
S_k^\ell \cap S_{k'}^{\ell'} = \emptyset \quad \mbox{whenever } k\leq k' \mbox{ and } \ell < \ell'.
$$ 
Finally, we put $S_k = \cup_{\ell \geq 1} S_k^\ell$ and we claim that $\Ss=\{S_k\}_{k\geq 0}$ is sparse. Indeed, the above equality yields
\begin{align*}
\ind_{S_k^\ell} \cexp_k[\ind_{\cup_{j>k} S_j}] & =\cexp_k[\ind_{S_k^\ell}\ind_{\cup_{j>k} S_j}] \\
& = \cexp_k\Big[\ind_{S_k^\ell}\ind_{\bigcup_{j>k,m < \ell} S_j^m}\Big] \\
& \leq \cexp_k\Big[\ind_{S_k^\ell}\ind_{S^{\ell-1}}\Big] \leq \tfrac{1}{2} \ind_{S_k^\ell},
\end{align*}
where we use  ${S}^{0} =\emptyset$.
Summing over $\ell$ yields the sparsity of $\Ss$. 

We now prove the domination. Fix $k\geq 0$ and $x\in\Omega$, and define $\ell$ by
$$
2^{-\ell} < |\cexp_k [f](x)| \leq 2^{-\ell+1}.
$$
Then, there exists $m\leq k$ such that $x \in \Omega_m^\ell$, and there are two possibilities. If $x \in S_m^\ell$, then $\MaxAvg_{\Ss}$ includes a dominating average. Indeed,
\begin{align*}
|\cexp_k [f](x)| \leq 2^{-\ell + 1} &\leq 2 \,|\cexp_m[f](x)| = 2\,|\ind_{S_m}(x) \cexp_m[f](x)| \leq 2 \,\MaxAvg_{\Ss}f(x).
\end{align*}
By the pointwise differentiation theorem above we have $\per_j^{\frac12} g \to g$ a.e. as $j\to\infty$ for $g=\MaxAvg_\Ss f$ and therefore $\MaxAvg_\Ss f(x) \leq \MaxPer^{\frac12}(\MaxAvg_\Ss f)(x)$. We conclude that $|\cexp_k [f](x)| \leq 2\,\MaxPer^{\frac12}(\MaxAvg_\Ss f)(x)$. Otherwise, we must have $x \in \Omega_m^\ell \setminus S_m^{\ell}$ and so
$$
\cexp_m[\ind_{S^{\ell-1}}](x) > \tfrac12.
$$
  On $S^{\ell-1}$, $\MaxAvg_\Ss f \geq 2^{-\ell+1}$. This gives $\per_m^{\frac12}(\MaxAvg_\Ss f)(x) \geq 2^{-\ell+1}$, and so
$$
|\cexp_k [f](x)| \leq 2^{-\ell+1} \leq \per_m^{\frac12}(\MaxAvg_\Ss f)(x) \leq \MaxPer^{\frac12}(\MaxAvg_\Ss f)(x).
$$
Taking the supremum over $k$ in both cases yields $\MaxAvg f(x) \leq 2\, \MaxPer^{\frac12}(\MaxAvg_\Ss f)(x)$.
\end{proof}

We are now ready to prove Theorem \ref{th:theoremB}, which characterizes the $\Hardy^1$-norm via sparse operators. The key point is that Proposition \ref{prop:keyIneqMaximals} makes the forward direction short.

\begin{proof}[Proof of Theorem \ref{th:theoremB}]
  We first show that
  \begin{align*}
    \|\MaxAvg f\|_{L^1(\Omega)} \lesssim \left\| \SpOp_\Ss f \right\|_{L^1(\Omega)}
  \end{align*}
  for some sparse sequence $\Ss$. By density it suffices to consider $f \in L^\infty(\Omega)$. Then by Proposition \ref{prop:keyIneqMaximals}, there exists a sparse $\Ss=\{S_k\}_{k\geq 0}$ such that $\MaxAvg f \leq 2\, \MaxPer^{\frac12}(\MaxAvg_\Ss f)$. Therefore, we have by Lemma \ref{lem:levelSetMaxPer}
  \begin{align*}
    \int_\Omega \MaxAvg f \dd\mu & \leq 2  \int_\Omega \MaxPer^{\frac12}(\MaxAvg_\Ss f) \dd\mu  \leq 4\int_\Omega \MaxAvg_\Ss f \dd \mu 
    \leq   4\int_\Omega \SpOp_\Ss f \dd \mu.
  \end{align*}
Conversely, for each sparse sequence $\{S_k\}_{k\geq 0}$ we compute
  \begin{align*}
    \left\| \SpOp_\Ss f \right\|_{L^1(\Omega)} &=  \sum_{k\geq 0} \int_{S_k} |\cexp_k [f]| \dd\mu \\
    & \leq 2 \sum_k \int_{S_k} |\cexp_k [f]| \cdot  \cexp_k [\ind_{S_{\geq k+1}^c}] \dd\mu \\
    & \leq 2 \int_\Omega  \MaxAvg f \cdot \sum_{k\geq 0} \ind_{S_k \cap S_{\geq k+1}^c}\dd\mu \leq 2\,\|\MaxAvg f\|_{L^1(\Omega)},
  \end{align*}
  where we used the sparsity condition $\cexp_k[\ind_{S_{\geq k+1}^c}] \geq\frac12$ on $S_k$ in the first inequality, and the fact that the sets $S_k \cap S_{\geq k+1}^c$ are pairwise disjoint in the last step.
  This finishes the proof.
\end{proof}

We remark that the percentile enlargement $\MaxPer$ in Proposition \ref{prop:keyIneqMaximals} is necessary: in Section \ref{sec:sec2} we will show that the naive estimate $\MaxAvg f \lesssim \SpOp_\Ss f$ cannot hold for any sparse $\Ss$ with a universal constant.

We next turn to the proof of Theorem \ref{th:MaxStrong}. To that end, we generalize Proposition \ref{prop:keyIneqMaximals} to general maximal functions. The setting is the following: given a countable collection of sets $\mathcal{E}$ on any measure space $(X,\mu)$ such that $0<\mu(R) < \infty$ for all $R\in\mathcal{E}$, we define  
$$
\MaxAvg_{\mathcal{E}} f  = \sup_{R \in \mathcal{E}} |\langle f \rangle_R| \ind_R, \quad \MaxPer_\mathcal{E}^rf:=\sup_{R\in \mathcal{E}} \per_R^r(|f|) \ind_R.
$$
\begin{theorem}\label{rem:SparseDomGeneralSets}
Let $\mathcal{E}$ be a countable collection of sets of finite measure. For each $f \in L^\infty(X)$, there exists a sparse subfamily $\Ss \subseteq \mathcal{E}$ such that
$$
\MaxAvg_{\mathcal{E}} f \leq 2\,\MaxPer_\mathcal{E}^{\frac12}(\MaxAvg_{\Ss}f).
$$   
\end{theorem}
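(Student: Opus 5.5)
The plan is to adapt the layer-by-layer construction from the proof of Proposition \ref{prop:keyIneqMaximals}, replacing the filtration by an enumeration of $\mathcal{E}$. We normalize $\|f\|_{L^\infty}=1$ and enumerate $\mathcal{E}=\{R_1,R_2,\dots\}$. Here ``sparse'' is taken to mean that for each $R\in\Ss$ there is $E_R\subseteq R$ with $\mu(E_R)\geq \frac12\mu(R)$ and the $E_R$ pairwise disjoint. For $\ell\geq 1$ let $\mathcal{E}^\ell:=\{R\in\mathcal{E}:\, 2^{-\ell}<|\ip{f}_R|\leq 2^{-\ell+1}\}$. Every $R$ with $\ip{f}_R\neq 0$ lies in exactly one $\mathcal{E}^\ell$, and sets with zero average contribute nothing.

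\textbf{Step 1: construction of $\Ss$.} We build $\Ss=\bigcup_\ell \Ss^\ell$ inductively in $\ell$, going from large averages down to small ones. Within a layer $\ell$ we run through $\mathcal{E}^\ell$ in the order of the enumeration. A set $R$ is added to $\Ss^\ell$ exactly when
$$
\mu\Big(R\cap \bigcup_{R'\text{ previously selected}} R'\Big)\leq \tfrac12\mu(R),
$$
where ``previously selected'' means all sets chosen in layers $m<\ell$ together with those chosen earlier in layer $\ell$. Sparsity then follows from the standard Cordoba--Fefferman argument. Put $E_R:=R\setminus\bigcup_{R'\text{ selected after }R}R'$, where the selection is a well-ordered sequence. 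Disjointness of the $E_R$ is automatic: if $R'$ is selected after $R$, then $E_R\cap R'=\emptyset$, while $E_{R'}\subseteq R'$.

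\textbf{Step 2: sparsity estimate.} Here lies the main obstacle. The selection rule only controls the overlap of $R$ with \emph{earlier} selected sets, but $E_R$ requires control of its overlap with \emph{later} ones. In the martingale setting nestedness handled this, and in general it does not. I would first reverse the roles. Take $F_R:=R\setminus\bigcup_{R'\text{ selected before }R}R'$. Then $\mu(F_R)\geq\frac12\mu(R)$ by construction, and the $F_R$ are pairwise disjoint because each later set is stripped of every earlier one. So sparsity holds with the sets $F_R$, which avoids the obstacle entirely, provided the selection order is a well-order. It is, of type at most $\omega^2$: layers are indexed by $\ell$, and each layer is enumerated.

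\textbf{Step 3: domination.} Fix $x$ and $R\in\mathcal{E}^\ell$ with $x\in R$. If $R\in\Ss$, then $|\ip{f}_R|\leq \MaxAvg_\Ss f(x)$. On the other hand, $\MaxAvg_\Ss f\geq |\ip{f}_R|$ on all of $R$, so $\per^{1/2}_R(\MaxAvg_\Ss f)\geq|\ip{f}_R|$, and hence $|\ip{f}_R|\leq \MaxPer^{1/2}_\mathcal{E}(\MaxAvg_\Ss f)(x)$. If $R\notin\Ss$, then more than half of $R$ is covered by previously selected sets $R'$. These lie in layers $m\leq \ell$, so $|\ip{f}_{R'}|>2^{-\ell}$, and therefore $\MaxAvg_\Ss f>2^{-\ell}$ on a subset of $R$ of measure greater than $\frac12\mu(R)$. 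This gives $\per_R^{1/2}(\MaxAvg_\Ss f)\geq 2^{-\ell}\geq\frac12|\ip{f}_R|$. Taking the supremum over $R\ni x$ yields the claim with constant $2$.
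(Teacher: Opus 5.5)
Your proposal is correct and follows the paper's proof essentially step for step. The paper also splits $\mathcal{E}$ into layers $\mathcal{E}_m$ by the dyadic size of $|\langle f\rangle_R|$ and runs through them in lexicographic order on $\N\times\N$, selecting $R$ when its overlap with previously selected sets is at most $\frac12\mu(R)$. It uses exactly your $F_R$ (the part of $R$ not covered by earlier selections) as the disjoint sparse sets, and it handles unselected $R$ through the same median bound $\per_R^{1/2}(\MaxAvg_\Ss f)\geq 2^{-m}$. The only cleanup needed is in Step 1: drop the sets $E_R$ built from later selections, since the measure bound only holds for the $F_R$ of Step 2.
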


\begin{proof}
We can assume without loss of generality that $\|f\|_{L^\infty(X)} = 1$. For every integer $m \geq 1$ define the collections
\begin{align*}
\mathcal{E}_m = \cbraceb{ R \in \mathcal{E}:\, |\langle f \rangle_R| \in (2^{-m}, 2^{-m+1}] }.
\end{align*}
We will inductively produce a sequence of subcollections $\mathcal{F}_m \subseteq \mathcal{E}_m$ which,
when taken cumulatively,
sparsely dominate each layer $\MaxAvg_{\mathcal{E}_m}f$ after applying the maximal median.

Let $m \geq 1$ and assume inductively that we have constructed $\mathcal{F}_{m-1}$ (taking $\mathcal{F}_0 = \varnothing$).
First, we may enumerate the elements of $\mathcal{E}_m$ in any arbitrary way, we shall denote this enumeration by $\mathcal{E}_m = \{R_{m,1}, R_{m,2}, R_{m,3}, \dots\}$.

For each integer $i \geq 1$ we will, inductively, decide whether $R_{m,i}$ belongs in the collection $\mathcal{F}_m$ or not.
In order to make the selection process rigorous, we will define a function $\varphi : \mathbb{N} \times \mathbb{N} \to \{\text{True}, \text{False}\}$ and set $\mathcal{F}_m = \{R_{m,i}:\, \varphi(m,i) = \text{True}\}$. It will be convenient to introduce some notation for the union of all sets that have been included up to a certain point:
\begin{align*}
H_{m,i} = \bigcup \{ R_{n,j}:\, (n,j) \leq (m,i) \text{ and } \varphi(n,j) = \text{True} \},
\end{align*}
where $(n,j) \leq (m,i)$ denotes the lexicographical order: $(m, i) \leq (m+1,j)$ for all $i,j$ and $(m,i) \leq (m,i+1)$  for all $i$.

We can now define $\varphi$ as follows:
\begin{align*}
\varphi(m,i) = \text{True} \iff \frac{\mu(R_{m,i} \cap H_{m,{i-1}} )}{\mu(R_{m,i})} \leq \frac{1}{2}.
\end{align*}
In this way, $\varphi(m,i)$ is well-defined for every pair of integers $(m,i) \in \N \times \N$.

It is easy to check that the produced family is sparse. Indeed, we can define
\[
E_{R_{m,i}} = \begin{cases}
  R_{m,i} \setminus H_{m, i-1}, & \text{if } i>1, \\
  R_{m,1} \setminus \bigcup_{j} H_{m-1,j}, & \text{if } m>1 \text{ and } i=1,\\
  R_{1,1}, & \text{if } m=1 \text{ and } i=1.
\end{cases}
\]
Our selection condition guarantees that 
$$
\mu(E_{R_{m,i}}) \geq \frac{1}{2} \mu(R_{m,i}),
$$
and it is disjoint from all the previously selected sets. In order to show that we have the desired domination, fix any $R = R_{m,i} \in \mathcal{E}_m$. We can assume $R \in \mathcal{E}_m$ for \emph{some} $m$,
since otherwise $\langle f \rangle_R = 0$ and there is nothing to dominate. If $R_{m,i}$ was selected, then $\MaxAvg_{\Ss} f \geq |\langle f \rangle_R|$ on $R$, and thus $|\langle f \rangle_R| \leq \MaxPer_{\mathcal{E}}^{\frac{1}{2}} \MaxAvg_{\Ss} f$.
Suppose, instead, that $R$ was \emph{not} selected. Then
\begin{align*}
    \frac{\mu(R_{m,i} \setminus E_{R_{m,i}})}{\mu(R_{m,i})} > \frac{1}{2}.
\end{align*}
By construction, on $R_{m,i} \setminus E_{R_{m,i}}$ we have $\MaxAvg_{\Ss}f \geq 2^{-m}$, so the above inequality guarantees
\begin{align*}
\per_R^{\frac{1}{2}}(\MaxAvg_{\Ss}f) \geq 2^{-m},
\end{align*}
and so
\begin{align*}
|\langle f \rangle_R| \leq 2^{-m+1} \leq 2 \,\MaxPer^{\frac{1}{2}}_{\mathcal{E}}(\MaxAvg_{\Ss} f)
\end{align*}
on $R$.
\end{proof}

Denote the standard dyadic system on $\R^d$ by $\D(\R^d)$. If, for $d_1,d_2 \geq 1$, we take
$$
\mathcal{E} = \D(\R^{d_1}) \times \D(\R^{d_2}),
$$
we obtain the statement of Theorem \ref{th:MaxStrong}: for each $f \in L^\infty(\R^{d_1+d_2})$, there exists a sparse family $\Ss \subseteq \D(\R^{d_1}) \times \D(\R^{d_2})$ such that   
$$
\MaxAvg_{\D(\R^{d_1}) \times \D(\R^{d_2})} f(x) \leq 2 \,\MaxPer_{\D(\R^{d_1}) \times \D(\R^{d_2})}^{\frac12}(\MaxAvg_\Ss f)(x), \qquad x \in \R^{d_1+d_2}.
$$
As we mentioned in the Introduction, this domination result sheds new light on the negative result in \cite{BCOR19}. We expect that it will open the door to considering plausible sparse estimates for multiparameter multi-scale operators like the double Hilbert transform. 

\section{Regular martingales - multiscale operators and dyadic harmonic analysis} \label{sec:sec2}

\subsection{Martingale transforms and square functions} In this section, we make the additional standing assumption that the filtration $\{\F_k\}_{k\geq 0}$ is regular. This means that there exists a constant $R>0$ such that for every nonnegative $f \in L^1(\Omega)$ we have the estimate
$$
\cexp_k [f] \leq R \; \cexp_{k-1} [f], \qquad k\geq 1.
$$

In contrast to Section \ref{sec:sec1}, here we use the language of sparse stopping times $\cbrace{\nu_k}_{k\geq 0}$ from \cite{DPS23}, rather than sparse sequences $\cbrace{S_k}_{k\geq 0}$ as in Definition \ref{def:sparsemart}. Since we will now be dealing with multiscale operators, we need to introduce stopped versions of the objects used in Section \ref{sec:sec1}. Let $\nu$ be a stopping time.
Given a martingale $\{f_k\}_{k\geq 0}$ we  define the \emph{stopped} martingale $\{f^\nu_k\}_{k\geq 0}$ as 
$$
f^\nu_k:= \sum_{m=0}^k df_m \ind_{\cbrace{m \leq \nu}}.
$$
Likewise, the stopped conditional median $\per_\nu^r$ is given by
$$
\per_{\nu}^r(f) := \sum_{k\geq 0} \ind_{\{\nu=k\}} \per_k^r(f).
$$
Finally, we define the localized maximal function by
$$
\MaxAvg_{(\nu)} f:= \sup_{k \geq 0} \ind_{\cbrace{k\geq \nu}}|\cexp_k [f]|.
$$
Let $\{\sigma_k\}_{k\geq 0}$ be a predictable, $L^\infty$-normalized sequence of functions, i.e. assume that $\sigma_k$ is $\F_{k-1}$-measurable for all $k\geq 1$ and $\sigma_0$ is  $\F_0$-measurable and suppose
$$
\sup_{k\geq 0} \|\sigma_k\|_{L^\infty(\Omega)} \leq 1.
$$
The martingale transform $T=T(\sigma)$ associated with such a sequence and its associated maximal truncation $T^*$ are the operators respectively given by
\begin{align*}
    T f := \sum_{k \geq 0} \sigma_k df_k, \qquad\qquad T^* f := \sup_{\ell \geq 0}\, \absB{\sum_{k=0}^\ell \sigma_k df_k }.
\end{align*}
The corresponding objects localized at a stopping time $\nu$ are defined as follows:
\begin{align*}
    T_{(\nu)} f& :=  \ind_{\{\nu<\infty\}} \sigma_\nu\cexp_{\nu}[f] + \sum_{k \geq 0} \ind_{\cbrace{k>\nu}} \sigma_k df_k, \\
    \quad T_{(\nu)}^*f & := \sup_{\ell \geq 0} \,\absB{ \ind_{\cbrace{\ell\geq \nu}}\sigma_{\nu}\cexp_{\nu}[f] + \sum_{k= 0}^\ell \ind_{\cbrace{k>\nu}} \sigma_k df_k}.
\end{align*}
With this notation, $T_{(\nu)}f$ is slightly different from the martingale transform $Tf$ \emph{started} at $\nu$, usually denoted by $T_\nu f$. We call $T_{(\nu)}f$ the localized operator, which is natural when the operator only sees the future of $\nu$.
The started martingale transform $T_\nu f$ corresponds to the complement of the operator stopped at $\nu$, which we will not use. Finally, we define the square function and its localization at a stopping time $\nu$ by
\begin{align*}
  Sf := \brB{\sum_{k \geq 0} (df_k)^2}^\frac12, \qquad S_{(\nu)} f = \brB{\ind_{\{\nu<\infty\}}|\cexp_{\nu}[f]|^2 + \sum_{k\geq 0} \ind_{\cbrace{k > \nu}} (df_k)^2}^\frac12.
\end{align*}

We will study the conditional medians of the martingale transform $T(\sigma)$ and the square function $S$ simultaneously.

\begin{proposition} \label{thm:median_of_mt_and_sf}
Let $T=T(\sigma)$ be a martingale transform and let $\nu$ be a stopping time. Then for any $f \in L^\infty(\Omega)$ and $r \leq \frac{1}{2(R+2)}$ we have
\begin{align*}
    \per_\nu^{(R+2)r}(T_{(\nu)}^* f) &\leq 2 r^{-\frac12}\,\per_\nu^r(\MaxAvg_{(\nu)} f),\\
    \per_\nu^{(R+2)r}(S_{(\nu)} f) &\leq 2 r^{-\frac12}\,\per_\nu^r(\MaxAvg_{(\nu)} f).
\end{align*}
\end{proposition}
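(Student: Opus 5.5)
The plan is a stopping-time (Calderón--Zygmund at the level $\lambda$) argument carried out conditionally on $\F_\nu$. Set $\lambda := \per_\nu^r(\MaxAvg_{(\nu)} f)$, an $\F_\nu$-measurable function on $\{\nu<\infty\}$. Define the stopping time
$$
\tau := \inf\{k \geq \nu :\, |\cexp_k[f]| > \lambda\}.
$$
Since $\lambda$ is $\F_\nu$-measurable, $\tau$ is indeed a stopping time, and $\{\tau<\infty\}=\{\MaxAvg_{(\nu)}f>\lambda\}$. By the defining property of the conditional percentile, for every $A\in\F_\nu$ we get $\mu(A\cap\{\tau<\infty\})\leq r\mu(A)$, that is, $\cexp[\ind_{\{\tau<\infty\}}\,|\,\F_\nu]\leq r$. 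On $\{\tau=\infty\}$ we have $T^*_{(\nu)}f=T^*_{(\nu)}f^\tau$ and $S_{(\nu)}f=S_{(\nu)}f^\tau$, where $f^\tau$ is the stopped martingale. So it suffices to control the conditional distribution of $T^*_{(\nu)}f^\tau$ and $S_{(\nu)}f^\tau$ off a further exceptional set.

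I would then enlarge the bad set using regularity, because $f^\tau$ has a final jump $df_\tau$ that is not controlled by $\lambda$. The natural choice is
$$
\Omega' := \bigl\{\sup_{k\geq\nu}\cexp_k[\ind_{\{\tau<\infty\}}] > \tfrac{1}{R+1}\bigr\}
$$
(or a similar threshold). Conditional Doob's weak $(1,1)$ inequality started at $\nu$ gives $\cexp[\ind_{\Omega'}\,|\,\F_\nu]\lesssim Rr$. The point of removing $\Omega'$ is that, by regularity $\cexp_k[\ind_{\{\tau=k\}}]\leq R\,\cexp_{k-1}[\ind_{\{\tau=k\}}]$, the jump times can be ``predicted'' one step early. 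This lets me replace $f^\tau$ by a good martingale $g$ with $|g_k|\lesssim\lambda$ for all $k\geq\nu$, for instance by stopping at the predictable-type time at which the conditional probability of stopping becomes large. In addition, $g$ agrees with $f$ in differences after $\nu$ on $\{\tau=\infty\}\setminus\Omega'$. Note that at time $\nu$ itself $|\cexp_\nu f|\leq\lambda$ holds unless $\tau=\nu$.

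With $g$ in hand, I would use the conditional $L^2$ bound. By orthogonality of martingale differences and $|\sigma_k|\leq1$,
$$
\cexp_\nu\bigl[(S_{(\nu)}g)^2\bigr] = \cexp_\nu\bigl[|g_\infty|^2\bigr]\leq 4\lambda^2 .
$$
The same bound holds for $T_{(\nu)}g$, and conditional Doob's $L^2$ maximal inequality passes from $T_{(\nu)}$ to $T^*_{(\nu)}$. Conditional Chebyshev then gives
$$
\cexp_\nu\bigl[\ind_{\{S_{(\nu)}g>2r^{-1/2}\lambda\}}\bigr]\leq r
$$
up to the constant, and likewise for $T^*$. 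Adding the three exceptional conditional probabilities, namely $r$ from $\{\tau<\infty\}$, about $Rr$ from $\Omega'$, and $r$ from Chebyshev, gives a total of at most $(R+2)r$. This is exactly the definition of $\per_\nu^{(R+2)r}(\,\cdot\,)\leq 2r^{-1/2}\lambda$. The hypothesis $r\leq\frac1{2(R+2)}$ ensures that these percentiles are meaningful, i.e. the ratio is below $\frac12$.

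The main obstacle is the second step: constructing $g$ so that the overshoot at time $\tau$ is controlled using only regularity. The danger is that $|df_\tau|$ is not bounded by $\lambda$ in general, since regularity bounds $\cexp_\tau|f|$ and not $|\cexp_\tau f|$. At the same time the exceptional set must cost only $Rr$, so that the final budget is $(R+2)r$. I would first try to bound the contribution of the last step by $\cexp_{\tau-1}$-quantities via $\cexp_k[\ind_{\{\tau=k\}}]\leq R\,\cexp_{k-1}[\ind_{\{\tau=k\}}]$. I would also try stopping one step before the conditional mass of $\{\tau<\infty\}$ exceeds a fixed threshold. If the sharp constants do not come out, I would accept a worse threshold and adjust how the budget is split among the three exceptional sets.
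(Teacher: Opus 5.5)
Your outline follows the paper's proof: the bad set $B:=\{\MaxAvg_{(\nu)}f>\lambda\}=\{\tau<\infty\}$ with $\cexp_\nu[\ind_B]\le r$, the Doob enlargement $\Omega'$ at threshold $\frac1{R+1}$, a stopped good martingale, and a conditional $L^2$ bound plus Chebyshev. However, the step you call the main obstacle is the whole content of the proof, and you leave it open. Of your suggestions, the right one is to stop \emph{at} the first time the conditional probability of $B$ exceeds $\frac1{R+1}$. Stopping ``one step before'' that time does not give a stopping time. Regularity must also be applied to $\ind_B$ rather than to $f$. Bounding $df_\tau$ through $\cexp_{\tau-1}$-quantities does not help, since, as you note, regularity controls $\cexp_k[|f|]$ and not $|f_k|$. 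What is missing is the verification that this stopping removes the overshoot entirely. Set
\[
\sigma:=\inf\bigl\{m\ge\nu:\ \cexp_m[\ind_B]>\tfrac1{R+1}\bigr\},\qquad g:=f^\sigma,
\]
so that $\{\sigma<\infty\}=\Omega'$ and $g=f$ off $\Omega'$. For $\nu<m\le\sigma$ we have $\cexp_{m-1}[\ind_B]\le\frac1{R+1}$, hence $\cexp_m[\ind_B]\le\frac R{R+1}<1$ by regularity. On the other hand, $E:=\{|f_m|>\lambda\}\cap\{\nu<m\le\sigma\}$ is $\F_m$-measurable and contained in $B$, so $\cexp_m[\ind_B]=1$ on $E$. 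Therefore $\mu(E)=0$. The case $m=\nu$ is identical, using $\cexp_\nu[\ind_B]\le r<1$. Hence $|g_m|\le\lambda$ for every $m\ge\nu$; this is your ``predict the jump one step early'' heuristic made rigorous.

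Your bookkeeping also has to be corrected to reach the stated constants. First, Doob's inequality at threshold $\frac1{R+1}$ costs $(R+1)r$, not ``about $Rr$''. This cannot be lowered to $Rr$: at threshold $\frac1R$, regularity only gives $\cexp_m[\ind_B]\le 1$, and the argument above breaks. Second, charging $\{\tau<\infty\}$ separately would then give $(R+3)r$. Instead, note that $B\subseteq\Omega'$, because $\cexp_k[\ind_B]\ge\ind_{\{\tau\le k\}}$. Since $g=f$ off $\Omega'$, only $\Omega'$ and the Chebyshev set need to be discarded. Third, the Chebyshev cost for $T^*_{(\nu)}$ is exactly $r$ only because $|g|\le\lambda$, which gives $\cexp_\nu[(T^*_{(\nu)}g)^2]\le 4\cexp_\nu[|g_\infty|^2]\le 4\lambda^2$. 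Your chain, $\cexp_\nu[|g_\infty|^2]\le 4\lambda^2$ followed by Doob's $L^2$ inequality, gives $16\lambda^2$ and a cost of $4r$. With these corrections the total is $(R+1)r+r=(R+2)r$, as stated.
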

\begin{proof}
  Let $T$ be either the maximally truncated martingale transform $T^*$ or the square function $S$. It is enough to prove for all $A \in\mc{F}_k$
  $$
  \per_k^{(R+2)r}(T_{(k)}(f\ind_A)) \leq  2 r^{-\frac12}\,\per_k^r(\MaxAvg_{(k)} (f\ind_A)),
  $$
  and we may further assume that $A \cap \{\per_k^r(\MaxAvg_{(k)} (\ind_Af))=0\}=\emptyset$, because otherwise both sides of the inequality are $0$. Consider the set
  \begin{align*}
    B = \{x \in A:\, \MaxAvg_{(k)} f(x) > \per_k^r(\MaxAvg_{(k)} f)(x)\}.
  \end{align*}
  By definition, $\mu(B) \leq r \mu(A)$ because $A$ is $k$-measurable. Consider the enlargement
  \begin{align*}
    \widetilde{B} = \Big\{x \in \Omega:\, \MaxAvg_{(k)}(\ind_B) > \frac{1}{R+1} \Big\}.
  \end{align*}
  By Doob's maximal theorem, we have $\mu(\widetilde{B}) \leq (R+1)r\mu(A)$. Define the following stopping time:
  \begin{align*}
    \tau(x) = \inf\Big\{ m \geq k:\, \cexp_m[\ind_B](x) > \frac{1}{R+1} \Big\}, \qquad x \in \Omega.
  \end{align*}
  We claim that $\abs{f^\tau} \leq \per_k^r(\MaxAvg_{(k)} f)$. Indeed, fix $x \in B$ and let $m \leq \tau(x)$. Then
  \begin{align*}
    \cexp_{m-1}[\ind_B](x) \leq \frac{1}{R+1} \implies \cexp_m[\ind_B](x) \leq \frac{R}{R+1} < 1,
  \end{align*}
  by the regularity of the filtration. On the other hand, if $|f_m(x)| > \per_k^r(\MaxAvg_{(k)} f)(x)$, there exists $A_m \in \F_m$ such that $x\in A_m \subseteq B$, and therefore
  $$
  \cexp_m[\ind_B](x) = 1\quad \mbox{or} \quad \mu(A_m)=0.
  $$ 
  But $\cexp_{m-1}[\ind_B](x) < 1$, so we must have $\abs{f_m(x)} \leq \per_k^r(\MaxAvg_{(k)} f)(x)$.
  Next, note that on the complement of $\widetilde{B}$ we have
  \begin{align*}
    T_{(k)}(f\ind_A)_n = T_{(k)}(f^{\tau}\ind_A)_n,
  \end{align*}
  thus
  \begin{align*}
    \mu(|T_{(k)}(f\ind_A)| >  2 r^{-\frac12}\,\per_k^r(\MaxAvg_{(k)} f)) &\leq \mu\br{\widetilde{B}} + \mu\brb{|T_{(k)}(f^\tau\ind_A)| >  2 r^{-\frac12}\cdot \per_k^r(\MaxAvg_{(k)} f)}.
  \end{align*}
  If $T$ is the maximal truncation of a martingale transform, the above implies
  \begin{align*}
  \mu\brb{|T_{(k)}(f\ind_A)| & >  2 r^{-\frac12}\per_k^r(\MaxAvg_{(k)} f)}\\
  & \leq (R+1)r\mu(A) + \mu\brb{|\MaxAvg_{(k)}(T_{(k)}(f^\tau\ind_A))| >  2 r^{-\frac12}\cdot \per_k^r(\MaxAvg_{(k)} f)}.    
  \end{align*}
To estimate the second term on the right-hand side above, we use Chebyshev's conditional inequality, the measurability of $\per_k^r(\MaxAvg_{(k)} f)$, and the $L^2$-boundedness of $T_{(k)}$ and Doob's maximal function:
  \begin{align*}
    \mu\brb{\MaxAvg_{(k)}(T_{(k)}(f^\tau\ind_A)) > &  2 r^{-\frac12}\per_k^r(\MaxAvg_{(k)} f)} \\
    & = \int_\Omega \mu\Big(\MaxAvg_{(k)}(T_{(k)}(f^\tau\ind_A)) >  2 r^{-\frac12}\per_k^r(\MaxAvg_{(k)} f) \big| \F_k\Big) \dd\mu \\
    & \leq \frac{r}{4} \int_\Omega \per_k^r(\MaxAvg_{(k)} f)^{-2} \cdot \MaxAvg_{(k)}(T_{(k)}(f^\tau\ind_A))^2 \dd\mu \\
    & = \frac{r}{4} \int_\Omega  \MaxAvg_{(k)}\left(T_{(k)}\left(f^\tau \cdot \per_k^r(\MaxAvg_{(k)} f)^{-1}\cdot \ind_A\right)\right)^2 \dd\mu \\
    & \leq r\left\|f^\tau \cdot \per_k^r(\MaxAvg_{(k)} f)^{-1} \cdot \ind_A\right\|_{L^2(\Omega)}^2 \leq r\mu(A),
  \end{align*}
  which is enough to conclude. If instead $T$ is the square function, a similar computation holds using the equality
  $$
  \int_\Omega g (S_{(k)}f)^2 \dd\mu = \int_\Omega g|f|^2  \dd\mu,
  $$
  if $g$ is $\F_k$-measurable. 
\end{proof}

As a consequence of Proposition \ref{thm:median_of_mt_and_sf}, we obtain Theorem \ref{th:theoremA} for martingale transforms. We include the precise statement below. 

\begin{corollary}\label{cor:thmAMT}
  Let $r := 1/(2(R+3))$ and let $T = T(\sigma)$ be a martingale transform.
  For every $f \in L^\infty(\Omega)$ there exists a sparse sequence of stopping times $\{\nu_j\}_{j\geq 0}$ such that
  \begin{align*}
    T^*f \lesssim \sum_{j=0}^\infty \per_{\nu_j}^r(\MaxAvg_{(\nu_j)} f) \ind_{\{\nu_j < \infty\}}.
  \end{align*}
\end{corollary}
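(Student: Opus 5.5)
We construct the stopping times $\nu_j$ recursively, in the style of the Lerner/Lacey sparse principle, and use Proposition \ref{thm:median_of_mt_and_sf} in place of the weak-type $(1,1)$ bound. Set $\nu_0 := 0$, and write $r' := (R+2)r$. The choice $r = 1/(2(R+3))$ gives $r \le \frac{1}{2(R+2)}$, so the proposition applies, and it also gives $r' + r = (R+3)r = \tfrac12$.

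Given $\nu_j$, put $\lambda_j := 2r^{-1/2}\,\per_{\nu_j}^r(\MaxAvg_{(\nu_j)}f)$ on $\{\nu_j<\infty\}$. This is $\F_{\nu_j}$-measurable. Define
\[
\nu_{j+1} := \inf\bigl\{ m > \nu_j :\ \MaxAvg^m\bigl(T^*_{(\nu_j)} f\bigr) > \lambda_j \ \text{ or }\ |f_m| > \per_{\nu_j}^r(\MaxAvg_{(\nu_j)} f) \bigr\}.
\]
Here $\MaxAvg^m(T^*_{(\nu_j)}f)$ is the running supremum of the truncations $\bigl|\ind_{\{\ell\ge\nu_j\}}\sigma_{\nu_j}\cexp_{\nu_j}[f]+\sum_{k\le\ell}\ind_{\{k>\nu_j\}}\sigma_k df_k\bigr|$ over $\ell\le m$. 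Each truncation is $\F_\ell$-measurable, so $\nu_{j+1}$ is a stopping time.

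For sparsity we must show $\cexp[\ind_{\{\nu_{j+1}<\infty\}}\mid\F_{\nu_j}] \le \tfrac12$ on $\{\nu_j<\infty\}$. We split $\{\nu_{j+1}<\infty\}$ according to which condition triggers. The truncations exceeding $\lambda_j$ lie inside $\{T^*_{(\nu_j)}f>\lambda_j\}$, whose conditional measure is at most $r'$ by Proposition \ref{thm:median_of_mt_and_sf} and the defining property of the conditional percentile. This holds in stopped form on each $A\in\F_{\nu_j}$. The points where $|f_m|$ exceeds the median lie inside $\{\MaxAvg_{(\nu_j)}f > \per^r_{\nu_j}(\MaxAvg_{(\nu_j)}f)\}$, which has conditional measure at most $r$. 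The total is at most $r'+r = \tfrac12$. Lemma \ref{lem:defsOfSparse} then converts this into the adapted-sequence picture if that is needed.

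For the domination, fix $x$ and any truncation level $\ell$. We telescope the partial sum $\sum_{k\le\ell}\sigma_k df_k$ along the intervals $(\nu_j,\nu_{j+1}]$ with $\nu_j\le\ell$. The piece over $\nu_j<k<\nu_{j+1}$, plus the term $\sigma_{\nu_j}\cexp_{\nu_j}[f]$, is a truncation of $T^*_{(\nu_j)}f$ taken strictly before $\nu_{j+1}$, so it is at most $\lambda_j$. The overshoot from the two different normalizations of consecutive pieces consists of the jump term $\sigma_{\nu_{j+1}}df_{\nu_{j+1}}$ and the correction $\sigma_{\nu_{j+1}}\cexp_{\nu_{j+1}}[f]$ that starts the next block.

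This jump step is the main obstacle. I would handle it through regularity, exactly as in the proof of Proposition \ref{thm:median_of_mt_and_sf}. At time $\nu_{j+1}-1$ no condition has yet fired, so $|f_{\nu_{j+1}-1}| \le \per^r_{\nu_j}(\MaxAvg_{(\nu_j)}f)$. Regularity controls $|f_{\nu_{j+1}}|$ by $R\,\cexp_{\nu_{j+1}-1}|f|$. That quantity is not a bound on $f_{m-1}$ itself, since it involves $|f|$ rather than $f$. So I would instead absorb the term at the start of the next block: $|\sigma_{\nu_{j+1}}\cexp_{\nu_{j+1}}[f]| \le \per^r_{\nu_{j+1}}(\MaxAvg_{(\nu_{j+1})}f)$, because $\per^r$ of $\MaxAvg_{(\nu)}f$ dominates $|f_\nu|$. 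Hence $|df_{\nu_{j+1}}| \le |f_{\nu_{j+1}}| + |f_{\nu_{j+1}-1}| \lesssim \lambda_{j+1}+\lambda_j$.

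Summing over $j$ gives $|\sum_{k\le\ell}\sigma_kdf_k| \lesssim \sum_j \lambda_j \ind_{\{\nu_j<\infty\}}$. Taking the supremum over $\ell$ yields the claim. Finally, the final pieces terminate because $f$ is bounded and sparsity forces $\{\nu_j<\infty\}$ to have measure tending to zero, so the tails of the construction converge and do not affect the bound.
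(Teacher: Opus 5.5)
Your proof is correct and follows the paper's argument. You stop when either the localized maximal truncation or $|f_m|$ exceeds the threshold, obtain sparsity from Proposition~\ref{thm:median_of_mt_and_sf} plus the percentile property with total $(R+3)r=\tfrac12$, and handle the jump by writing $df_{\nu_{j+1}}=f_{\nu_{j+1}}-f_{\nu_{j+1}-1}$, controlling $f_{\nu_{j+1}-1}$ by the unfired stopping condition and absorbing $\sigma_{\nu_{j+1}}f_{\nu_{j+1}}$ into the next localized block. The only differences are presentational: you apply the proposition directly at each $\nu_j$ rather than restarting the filtered space on $\{\nu_1=k\}$, and both your detour through regularity and your closing termination remark are unnecessary, since for fixed $x$ and $\ell$ only finitely many $\nu_j(x)\le \ell$.
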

\begin{proof}[Proof of Theorem \ref{th:theoremA} for martingale transforms]
  Set $\nu_0=0$ and consider the stopping time
  \begin{align*}
    \nu_1 = \inf \cbraceb{m \geq 0:\, \max((T^*f)_m, |f_m|) >  2 r^{-\frac12}\per_0^r(\MaxAvg f)}
  \end{align*}
  and set $B = \{\nu_1 < \infty\}$. By Proposition \ref{thm:median_of_mt_and_sf}, we have
  \begin{align*}
    \mu(B) &\leq \mu(\MaxAvg f > \per_0^r(\MaxAvg f)) + \mu(T^* f >  2 r^{-\frac12}\per_0^r(\MaxAvg f)) \\
    &\leq r + (R+2)r = (R+3)r \leq \tfrac{1}{2}.
  \end{align*}
If $\mu(B) = 0$ we are done. Otherwise, we can write
  \begin{align*}
    |(Tf)_n^*| \leq \underbrace{\big|(T^*f)_{\nu_1-1 \land n}\big| +|f_{\nu_1-1}|\vphantom{ \sum_{\nu_1+1 \leq m \leq n}}}_{\mathrm{I}} + \Big|\underbrace{\sigma_{\nu_1} f_{\nu_1} + \sum_{\nu_1+1 \leq m \leq n} \sigma_{m} df_m}_{\mathrm{II}}\Big|,
  \end{align*}
  almost everywhere in $B$. By definition, $\mathrm{I} \leq 2r^{-\frac12}\per_0^r(\MaxAvg f)$, and so we can just focus on $\mathrm{II}$, for which we adapt the idea from \cite{Lac2017} in the dyadic setting: the term $\mathrm{II}$ is a martingale transform with respect to the probability space started at $\nu_1$. To be precise, let $\Omega_k = \{\nu_1 = k\}$. Then, we can form the probability space restricted to $\Omega_k$ in the usual way. Let $\widetilde{\mu}$ be the probability measure conditioned to $\Omega_k$ and let
  \begin{align*}
    \mathcal{G}_m = \F_{m+k}.
  \end{align*}
  Then $(\Omega_k, (\mathcal{G}_m)_{m \geq 0}, \widetilde{\mu})$ is a filtered probability space and
  \begin{align*}
    \widetilde{\sigma}_m &:= \sigma_{m+k} \\
    \widetilde{f}_m &:= f_{m+k}
  \end{align*}
  are, respectively, a predictable sequence and a martingale. Also,
  \begin{align*}
    d\widetilde{f}_m = \begin{cases}
        f_k &\text{if } m = 0 \\
        df_{m+k} &\text{if } m \geq 1,
    \end{cases}
  \end{align*}
  which explains the first term in $\mathrm{II}$. Therefore, almost everywhere on $\Omega_k$ we have
  \begin{align*}
    \mathrm{II} = \widetilde{T}\widetilde{f}_n,
  \end{align*}
  where $\widetilde{T}$ is the martingale transform associated to $\widetilde{\sigma}$.  
  Hence, we can iterate this term. At the end of the process, we obtain a sparse sequence of stopping times, which yields the result.
\end{proof}

The corresponding result for $S$ is proved in a similar manner.

\begin{corollary} \label{cor:thmASF}
 Let $r := 1/(2(R+3))$. For every $f \in L^\infty(\Omega)$ there exists a sparse sequence of stopping times $\{\nu_j\}_{j\geq 0}$ such that
  \begin{align*}
    Sf \lesssim \brB{\sum_{j=0}^\infty [\per_{\nu_j}^r(\MaxAvg_{(\nu_j)} f)]^2 \ind_{\{\nu_j < \infty\}}}^\frac12.
  \end{align*}
\end{corollary}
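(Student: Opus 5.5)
We mirror the proof of Corollary \ref{cor:thmAMT}, replacing the maximal truncation $T^*$ by the square function and using the second inequality of Proposition \ref{thm:median_of_mt_and_sf}. The one real change is bookkeeping: $S$ is additive in squares rather than subadditive in absolute values. So the iteration will produce an $\ell^2$-sum of medians instead of an $\ell^1$-sum.

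First we build the initial stopping time. Set $\nu_0=0$ and $\lambda_0:=2r^{-\frac12}\per_0^r(\MaxAvg f)$, which is $\F_0$-measurable. Let
\begin{align*}
\nu_1=\inf\cbraceb{m\geq 0:\, \max\brb{(S_m f),|f_m|}>\lambda_0},
\end{align*}
where $S_mf=(\sum_{k\leq m}(df_k)^2)^{1/2}$ is the truncated square function. Since $S_m f$ is $\F_m$-measurable and nondecreasing in $m$, $\nu_1$ is a stopping time. We bound $B=\{\nu_1<\infty\}$ by
\begin{align*}
\mu(B)\le \mu(\MaxAvg f>\per_0^r(\MaxAvg f))+\mu(Sf>\lambda_0)\le r+(R+2)r\le \tfrac12.
\end{align*}
Here the first term uses the defining property of the percentile (note $2r^{-1/2}\geq 1$), and the second uses Proposition \ref{thm:median_of_mt_and_sf} with $\nu=0$ and $(R+2)r\le\frac12$. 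The same estimate holds conditionally on any $A\in\F_0$, which is what the sparsity in the sense of \eqref{sparse_seq_of_sts} requires.

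Next we split the square function at $\nu_1$. On $B$ we write
\begin{align*}
(Sf)^2=\sum_{k<\nu_1}(df_k)^2+(df_{\nu_1})^2+\sum_{k>\nu_1}(df_k)^2
\le (S_{\nu_1-1}f)^2+2f_{\nu_1}^2+2f_{\nu_1-1}^2+\sum_{k>\nu_1}(df_k)^2 .
\end{align*}
The terms $S_{\nu_1-1}f$ and $|f_{\nu_1-1}|$ are at most $\lambda_0$ by the definition of $\nu_1$. The remaining piece, $f_{\nu_1}^2+\sum_{k>\nu_1}(df_k)^2$, is exactly $(S_{(\nu_1)}f)^2$. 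It is also the square function of the martingale $\widetilde f_m=f_{m+k}$ on $\Omega_k=\{\nu_1=k\}$ with filtration $\mathcal G_m=\F_{m+k}$, just as in the proof of Corollary \ref{cor:thmAMT}. Off $B$ we have $Sf\le\lambda_0$ directly. This gives
\begin{align*}
(Sf)^2\lesssim \lambda_0^2+\ind_B\,(S_{(\nu_1)}f)^2 .
\end{align*}
The restarted filtration is still regular with the same constant $R$, so the argument can be repeated on each $\Omega_k$. At the next stage the threshold is $2r^{-\frac12}\per^r_{\nu_1}(\MaxAvg_{(\nu_1)}f)$, and Proposition \ref{thm:median_of_mt_and_sf} is applied with the stopping time $\nu_1$.

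Iterating produces an increasing sequence $\{\nu_j\}$ that is sparse in the sense of \cite{DPS23}, and by Lemma \ref{lem:defsOfSparse} this is the notion we need. Unwinding the recursion gives
\begin{align*}
(Sf)^2\lesssim \sum_j \per_{\nu_j}^r(\MaxAvg_{(\nu_j)}f)^2\ind_{\{\nu_j<\infty\}}+\limsup_J \ind_{\{\nu_J<\infty\}}(S_{(\nu_J)}f)^2 .
\end{align*}
Sparsity gives $\mu(\nu_J<\infty)\le 2^{-J}$, so the last term vanishes almost everywhere. Taking square roots finishes the proof.

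The main obstacle is the restart step. We must check that the constants stay uniform when the same inequality is applied to the restarted martingale: the factor $2$ from splitting $df_{\nu_1}$ must be absorbed at each stage only once, not compounded. We also need to verify the conditional form of the sparsity bound. The first issue is fine because each level is re-expanded only once. For the second, we would work on $A\in\F_{\nu_j}$ exactly as in the proof of Proposition \ref{thm:median_of_mt_and_sf}, which is already localized to sets $A\in\F_k$.
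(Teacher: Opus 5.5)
Your proposal follows the paper's proof: the same stopping time $\nu_1$, the same bound $\mu(B)\le (R+3)r\le\frac12$ via Proposition \ref{thm:median_of_mt_and_sf}, the same splitting $(df_{\nu_1})^2\le 2f_{\nu_1-1}^2+2f_{\nu_1}^2$, and the same restart on $\{\nu_1=k\}$ followed by iteration. The one point to fix is the compounding issue you flag, and ``each level is re-expanded only once'' does not resolve it: your displayed recursion $(Sf)^2\lesssim\lambda_0^2+\ind_B(S_{(\nu_1)}f)^2$ carries a factor $2$ on the recursive term, and a recursion $a_0\le C\lambda_0^2+2a_1$ compounds to $2^j$ under iteration no matter how often each level is expanded. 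The fix, as in the paper, is to charge the surplus copy of $f_{\nu_1}^2$ to level $1$ via $|f_{\nu_1}|\le \per^r_{\nu_1}(\MaxAvg_{(\nu_1)}f)$. This bound holds because $|f_{\nu_1}|$ is $\F_{\nu_1}$-measurable and dominated by $\MaxAvg_{(\nu_1)}f$. It gives, on $B$,
\[
(Sf)^2\le 3\lambda_0^2+[\per^r_{\nu_1}(\MaxAvg_{(\nu_1)}f)]^2+(S_{(\nu_1)}f)^2,
\]
with coefficient exactly $1$ on the recursive term, so the iteration yields a uniform constant.
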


\begin{proof}[Proof of Theorem \ref{th:theoremA} for square functions]
 Set $\nu_0=0$ and define $\nu_1$ in a similar way as before
   \begin{align*}
    \nu_1 = \inf \cbraceb{m \geq 0:\, \max((Sf)_m, |f_m|) >  2 r^{-\frac12}\per_0^r(\MaxAvg f)}.
  \end{align*}
  Setting $B=\{\nu_1<\infty\}$, we have again $\mu(B) \leq \frac12$ by Proposition \ref{thm:median_of_mt_and_sf}. For points in $B$, this time we split
  \begin{align*}
  (Sf)^2 & \leq \sum_{m < \nu_1} |df_m|^2 + |df_{\nu_1}|^2 + \sum_{m > \nu_1} |df_m|^2 \\
  & \leq \sum_{m < \nu_1} |df_m|^2 + 2|\cexp_{\nu_1-1}[f]|^2 + 2|\cexp_{\nu_1}[f]|^2 + \sum_{m > \nu_1} |df_m|^2 \\
  & \lesssim [\per_0^r(\MaxAvg f)]^2 + [\per_{\nu_1}^r(\MaxAvg_{\nu_1} f)]^2+(S_{(\nu_1)}f)^2, 
  \end{align*}
  and the rightmost term above can be iterated as in the proof of Corollary \ref{cor:thmAMT}.
\end{proof}

Clearly, Corollary \ref{cor:thmASF} recovers the classical sparse domination result for the dyadic square function, which states that for every $f \in L^\infty(\Omega)$ there is a sparse sequence of stopping times $\{\nu_j\}_{j\geq 0}$ (or equivalently a sparse sequence  $\cbrace{S_k}_{k\geq 0}$) such that
  \begin{align*}
    (Sf)^2 \lesssim \sum_{j=0}^\infty \cexp_{\nu_j}[|f|]^2 \ind_{\{\nu_j < \infty\}}= \sum_{k = 0}^\infty \ind_{S_k} \cexp_k[|f|]^2.
  \end{align*}
  
The $\Hardy^p(\Omega) \to L^p(\Omega)$ results for martingale transforms, $0<p<\infty$, and the estimate of $\|Sf\|_{L^p(\Omega)}$ by $\|\MaxAvg f\|_{L^p(\Omega)}$ 
follow from the fact that the cancellative sparse operator
$$
\CancSpOp_\Ss f := \sum_{k\geq 0} \ind_{S_k}\per_k^r(|f|) . 
$$
is  $L^p$-bounded for all $p \in (0,\infty)$. 

\begin{lemma}\label{lem:LpboundCancellativeSparseOp} Let $0<p<\infty$ and let $\Ss$ be a sparse sequence. For all $f \in L^p(\Omega)$  we have
$$
\|\CancSpOp_\Ss f \|_{L^p(\Omega)} \lesssim \| f \|_{L^p(\Omega)}.
$$
\end{lemma}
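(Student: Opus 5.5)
The plan is to split into the ranges $0<p\leq 1$ and $1<p<\infty$. In both ranges I will use the disjoint sets
$$
E_k := S_k \setminus S_{\geq k+1}, \qquad k\geq 0.
$$
These are pairwise disjoint: if $j<k$ then $E_k\subseteq S_k\subseteq S_{\geq j+1}$, which is disjoint from $E_j$. They are also $\mathcal F$-measurable, and since $S_k\in\F_k$, the sparsity condition (in the conditional expectation form after Definition \ref{def:sparsemart}) gives
$$
\cexp_k[\ind_{E_k}] = \ind_{S_k}\cexp_k[\ind_{S_{\geq k+1}^c}] \geq \tfrac12 \ind_{S_k}.
$$
This inequality is what lets us move integrals over $S_k$ of $\F_k$-measurable quantities onto the disjoint sets $E_k$.

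\emph{Case $0<p\leq 1$.} Use $(\sum_k a_k)^p\leq \sum_k a_k^p$ for nonnegative $a_k$. Since $\ind_{S_k}\per_k^r(|f|)^p$ is $\F_k$-measurable, we get
$$
\int_\Omega (\CancSpOp_\Ss f)^p \dd\mu \leq \sum_k \int_{S_k} \per_k^r(|f|)^p \dd\mu \leq 2\sum_k \int_\Omega \ind_{S_k}\per_k^r(|f|)^p\,\cexp_k[\ind_{E_k}] \dd\mu = 2\sum_k\int_{E_k}\per_k^r(|f|)^p\dd\mu .
$$
On $E_k$ we have $\per_k^r(|f|)\leq \MaxPer f$. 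By disjointness of the $E_k$ the last sum is at most $2\int (\MaxPer f)^p\dd\mu$, and Lemma \ref{lem:levelSetMaxPer} bounds this by $2r^{-1}\|f\|_p^p$. This is the cancellative range, and the argument uses nothing but the percentile maximal function.

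\emph{Case $1<p<\infty$.} The $\ell^p$-subadditivity is lost, so I will not work with $\per_k^r$ directly. Instead, I reduce to the classical sparse operator $\SpOp_\Ss$ via the conditional Chebyshev bound $\per_k^r(|f|)\leq r^{-1}\cexp_k[|f|]$. To prove this bound, let $A=\{\per_k^r(|f|)>r^{-1}\cexp_k|f|\}\in\F_k$ and suppose $\mu(A)>0$. By the definition of $\per_k^r$ (infimum over rationals), on $A$ one has $\cexp_k[\ind_{|f|>t}]>r$ for some rational $t>r^{-1}\cexp_k|f|$, localized to a positive-measure $\F_k$-subset. This contradicts $\cexp_k[\ind_{|f|>t}]\leq \cexp_k|f|/t<r$ there.

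Hence $\CancSpOp_\Ss f\leq r^{-1}\SpOp_\Ss|f|$, and it remains to bound $\SpOp_\Ss$ on $L^p$. For this I use duality with $0\leq g\in L^{p'}$, the same transfer to $E_k$, and then H\"older and Doob:
$$
\int \SpOp_\Ss|f|\, g\dd\mu=\sum_k\int_{S_k}\cexp_k|f|\,\cexp_k g\dd\mu\leq 2\sum_k\int_{E_k}\cexp_k|f|\,\cexp_k g\dd\mu\leq 2\int \MaxAvg f\,\MaxAvg g\dd\mu\lesssim \|f\|_p\|g\|_{p'} .
$$

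The only step that requires some care is the $p>1$ range, where no subadditivity is available. There I would first try the reduction to $\SpOp_\Ss$ described above. If one wants constants that degenerate better as $p\to1$, an alternative is to replace Chebyshev with exponent $1$ by the bound $\per_k^r(|f|)\leq (r^{-1}\cexp_k|f|^q)^{1/q}$ for some $q<1$, which reduces matters to $\SpOp_\Ss$ acting on $|f|^q\in L^{p/q}$.
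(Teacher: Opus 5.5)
Your proof is correct, and it is organized differently from the paper's.

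\textbf{The paper's route.} The paper runs one chain for all $0<p<\infty$. It shows $\per_k^r(f)\le r^{-1/q}(\cexp_k[f^q])^{1/q}$ for every $q>0$ and takes $q=p/2$. It then transfers to the disjoint sets $S_k\cap S_{\geq k+1}^c$ exactly as you do, and closes with Doob's $L^2$ inequality applied to $f^{p/2}$.

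\textbf{The range $0<p\le 1$.} Here you instead compare $\per_k^r(|f|)$ with $\MaxPer f$ on $E_k$ and invoke Lemma~\ref{lem:levelSetMaxPer}. This stays inside the percentile framework, avoids Doob altogether, and gives the explicit constant $2/r$.

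\textbf{The range $p>1$.} This is the more substantive difference. The first step of the paper's chain, $\|\CancSpOp_\Ss f\|_{L^p(\Omega)}^p\le\sum_k\int_{S_k}\per_k^r(f)^p\dd\mu$, is $p$-subadditivity and holds only for $p\le 1$. For a counterexample, take $[0,1)$ with the dyadic filtration, $S_k=[0,2^{-k})$, $f\equiv 1$ and $p=2$: the left side is $\sum_{n\geq 1}n^2 2^{-n}=6$, while the right side is $2$. So for $p>1$ the written argument needs exactly the supplement you give. That is the $q=1$ bound $\CancSpOp_\Ss f\le r^{-1}\SpOp_\Ss|f|$, together with the duality estimate for $\SpOp_\Ss$ via Doob in $L^p$ and $L^{p'}$. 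Your splitting into two ranges is therefore not just a stylistic choice.

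\textbf{A possible unification.} The same duality gives $\|\sum_k\ind_{S_k}c_k\|_{L^p(\Omega)}\lesssim_p\|\sum_k\ind_{E_k}c_k\|_{L^p(\Omega)}$ for nonnegative $\F_k$-measurable $c_k$ and $p>1$. Hence you could also route $p>1$ through $\MaxPer$ and Lemma~\ref{lem:levelSetMaxPer}, giving a single proof for all $p$.

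\textbf{A cosmetic slip.} In your duality chain, $\cexp_k|f|$ is bounded by $\MaxAvg(|f|)$, not by $\MaxAvg f$; this changes nothing.
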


\begin{proof}
We can assume $f \geq 0$. For all $0<q<\infty$ and each $A\in \F_k$, we have
\begin{align*}
\per_k^{r}(f)^q & = \frac{\mu(A)\per_k^{r}(f)^q}{\mu(A)}\\
& = \frac{\per_k^{r}(f)^q}{\mu(A)}\left[\mu(\{x\in A: f(x) \geq \per_k^{r}(f)\}) + \mu(\{x\in A:f(x) < \per_k^{r}(f)\})\right] \\
& \leq \frac{1}{\mu(A)} \left[\int_A f^q d\mu + (1-r)\mu(A)\per_k^{r}(f)^q\right],
\end{align*}
from which we obtain
$$
\per_k^{r}(f) \leq \left(\frac{1}{r\mu(A)} \int_A f^q \dd\mu\right)^{\frac{1}{q}},
$$
for all $A \in \F_k$, which implies $\per_k^{r}(f) \leq r^{-1/q} (\cexp_k[f^q])^{1/q}$. 
Choosing $q=p/2$ and using the boundedness of $\MaxAvg$ on $L^2(\Omega)$, we get
    \begin{align*}
        \|\CancSpOp_{\Ss} f \|_{L^p(\Omega)}^p&\leq \sum_{k\geq 0} \int_{S_k} \left(\per_k^{r}(f)\right)^p \dd \mu\\
        & \leq r^{-2} \sum_{k\geq 0} \int_{S_k} \brb{\cexp_k[f^{\frac{p}{2}}]}^2 d \mu\\ 
        & \leq 4r^{-2} \sum_{k\geq 0} \int_{S_k} \brb{\cexp_k[f^{\frac{p}{2}}]}^2 \cexp_k[\ind_{S_{\geq k+1}^c}] \dd \mu\\ 
        & = 4r^{-2} \sum_{k\geq 0} \int_{S_k \cap S_{\geq k+1}^c} \brb{\cexp_k[f^{\frac{p}{2}}]}^2  \dd \mu\\ 
        & \leq 4r^{-2} \sum_{k\geq 0} \int_{S_k \cap S_{\geq k+1}^c} \brb{\MaxAvg\brb{f^{\frac{p}{2}}}}^2  \dd \mu\\
        & \leq 4r^{-2} \int_{\Omega} \brb{\MaxAvg\brb{f^{\frac{p}{2}}}}^2  \dd \mu \leq \|f\|_{L^p(\Omega)}^p,   
    \end{align*}
    as desired.
\end{proof}

\begin{remark}
The proofs of Corollaries \ref{cor:thmAMT} and \ref{cor:thmASF} use the regularity of the filtration in an explicit way. One may wonder if regularity is necessary, since boundedness results near the $L^1$-endpoint for martingale transforms hold for general filtered spaces, without the regularity condition. The same is true for the positive sparse domination result in \cite{DPS23}, which is valid for general filtrations. However, \cite[Example 8.1]{BG1970} shows that regularity is indeed necessary for our sparse domination results to hold. Otherwise, Theorem \ref{th:theoremA} and Lemma \ref{lem:LpboundCancellativeSparseOp} could be used to obtain the inequality
$$
\|Sf\|_p \lesssim \|\MaxAvg f\|_p
$$ 
for $0<p<1$ in a general probability space, which is shown to be false in the above-mentioned reference.    
\end{remark}

\label{sec:21}

\subsection{Dyadic harmonic analysis: Haar shifts} In this subsection we restrict ourselves to the filtration generated by the usual dyadic system $\D=\{\D_k\}_{k\in \Z}$ in $\R^d$. This means that we have a two-sided filtration that induces the family of conditional expectations
$$
\cexp_k [f] = \sum_{Q \in \D_k} \langle f \rangle_Q \ind_Q, \quad k \in \Z.
$$
We assume that the underlying measure is the Lebesgue measure, although any dyadically doubling one would yield exactly the same results. A measure $\mu$ is dyadically doubling if $\mu(\widehat{Q}) \leq C_{\mathrm{doub}}(\mu) \mu(Q)$ for all $Q\in \D$, which implies that the filtration is regular with regularity constant $R=C_{\mathrm{doub}}(\mu)$. Of course, the Lebesgue measure $m$ is dyadically doubling with $C_{\mathrm{doub}}(m)=2^d$. The fact that the filtration is two-sided changes nothing in practice, since the $\sigma$-algebra generated by $\D_{-\infty}$ is almost trivial: indeed,
$\cap_{k\in \mathbb{Z}} \sigma(\D_k)$ has finitely many elements, all of which have infinite measure. For a careful analysis of what happens in the general two-sided context, we refer to \cite{Treil2013}. We will avoid the related technicalities here. In the dyadic setting, Doob's maximal function can be written as
$$
\MaxAvg_\D f(x) = \sup_{x\in Q \in \D} \left|\langle f \rangle_Q \right|, \qquad x \in \R^d.
$$
We denote by $\D(Q)$ the family of dyadic cubes contained in $Q\in\D$, and for each $k>0$ we denote by $\D_k(Q)$ the family of dyadic subcubes of $Q$ with side length equal to $2^{-k}\ell(Q)$. Let $\{h_Q\}_{Q\in \mathscr{D}}$ be a fixed Haar basis of $L^2(\R^d)$. A Haar shift is an operator of the form
$$
\Sh f(x) := \sum_{Q\in \D} \sum_{T\in \D_t(Q)} \sum_{S \in \D_s(Q)} \alpha_{TS}^Q \langle f, h_T\rangle h_S(x),\qquad x \in \R^d,
$$
where $t$ and $s$ are nonnegative integers and the coefficients $\alpha_{TS}^Q$ satisfy   $$\|\alpha\|_{\ell^\infty} :=\sup_{Q,T,S} |\alpha_{TS}^Q| <\infty,$$ so that $\Sh$ is bounded on $L^2(\R^d)$, and in fact $\|\Sh\|_{L^2 \to L^2} \eqsim \|\alpha\|_{\ell^\infty}$. The pair of parameters $(t,s)$ is called the complexity of $\Sh$, which we omit from notation. 
Truncations of $\Sh$ and $\MaxAvg_\D$ are defined in a similar way as above, localizing to a $Q\in\D$. This is analogous to the generation-based truncations of Section \ref{sec:sec1}:
\begin{align*}
    \MaxAvg_{Q} f(x) &:= \sup_{x\in R \in \D(Q)} \left|\langle f \rangle_R \right|,&& x \in Q, \\ \Sh_{Q} f(x) &:= \sum_{R \in \D(Q)} \sum_{T\in \D_t(R)} \sum_{S \in \D_s(R)} \alpha_{TS}^R \langle f, h_T\rangle h_S(x) , &&x \in Q.
\end{align*}
Given $Q\in \D$ and an open set $A\subseteq Q$, define $A^{(0)}:=A$, and inductively, 
$$
A^{(n)} := \left\{x \in Q: \MaxAvg_\D(\ind_{A^{(n-1)}}) > \frac{1}{2^d+1}\right\}, \quad n\geq 1.
$$
By the weak $(1,1)$-boundedness of $\MaxAvg_\D$, we have
$$
|A^{(n)}| \leq (2^d+1)^n |A|,
$$
while by construction if $R\subseteq A$ is a cube in $\D$, then $R^{(n)} \subseteq A^{(n)}$. Moreover, if $R \subseteq A^{(n)}$ is a cube in $\D$ which is maximal with respect to inclusion, then any $T\in \D_{n-1}(R)$ satisfies $T \cap A^{c} \not= \emptyset$. As in the Subsection \ref{sec:21}, we will study the maximal truncations
$$
\Sh^* f(x) = \sup_{\ell < m} \,\absB{\sum_{k=\ell}^m \sum_{x\in Q\in \D_k} \sum_{T\in \D_t(Q)} \sum_{S \in \D_s(Q)} \alpha_{TS}^Q \langle f, h_T\rangle h_S(x)},\qquad x \in \R^d.
$$
The localized maximal truncations $\Sh_Q^*$ are defined restricting the second outermost sum above to cubes contained in $Q$. These also satisfy $\|\Sh_Q^*\|_{L^2 \to L^2} \lesssim \|\alpha\|_{\ell^\infty}$ uniformly on $Q\in \D$. The analogue of Proposition \ref{thm:median_of_mt_and_sf} for $\Sh^*$ is the following local estimate.

\begin{proposition} \label{dyadic:median_of_T}
  Let $C_0=(2^d+1)^{s+t+1}+1$. If $r < \frac{1}{C_0}$ and $Q \in \D$, then for all $f \in L^\infty(Q)$ we have
  \begin{align*}
    \per_Q^{C_0r}(\Sh_Q^*f) \leq {r^{-\frac12}} \|\Sh_Q^*\|_{L^2\to L^2} \cdot \per_Q^r(\MaxAvg_Q f).
  \end{align*}
\end{proposition}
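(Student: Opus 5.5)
We follow the scheme of Proposition \ref{thm:median_of_mt_and_sf}, replacing the single regularity step by the $(s+t+1)$-fold enlargement $A^{(n)}$ introduced above. The claim amounts to the level set estimate
\[
\big|\{x\in Q:\ \Sh_Q^*f(x) > r^{-\frac12}\|\Sh_Q^*\|_{L^2\to L^2}\,\lambda\}\big| \le C_0 r|Q|, \qquad \lambda:=\per_Q^r(\MaxAvg_Q f).
\]
We may assume $\lambda>0$, and by homogeneity we may normalise $\lambda=1$. Let
\[
B=\{x\in Q:\ \MaxAvg_Q f(x)>\lambda\}.
\]
This set is a union of maximal dyadic cubes $R\subseteq Q$ with $|\langle f\rangle_R|>\lambda$, and $|B|\le r|Q|$ by the definition of the percentile. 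Set $\widetilde B:=B^{(s+t+1)}$, which we view as a union of dyadic cubes. Its measure satisfies $|\widetilde B|\le (2^d+1)^{s+t+1}|B|\le (C_0-1)r|Q|$.

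Next we replace $f$ by a "good" function $g$, obtained by stopping at the maximal cubes of $B$. On each maximal cube $R\subseteq B$ we replace $f$ by its average over the dyadic parent $\widehat R$:
\[
g=f\ind_{Q\setminus B}+\sum_{R}\langle f\rangle_{\widehat R}\ind_R .
\]
Since $\widehat R\not\subseteq B$ and $\widehat R\subseteq Q$ (if $R=Q$ then $|B|=|Q|$, which is impossible for $r<1$), we get $|\langle f\rangle_{\widehat R}|\le\lambda$. Outside $B$, Lebesgue differentiation gives $|f|\le\MaxAvg_Qf\le\lambda$ a.e. Hence $\|g\|_\infty\le\lambda$.

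The key claim is that $\Sh_Q^*f=\Sh_Q^*g$ on $Q\setminus\widetilde B$. Indeed, $f-g$ has vanishing averages on every dyadic cube not strictly contained in some maximal $R$; in particular $\langle f-g,h_T\rangle=0$ unless $T\subsetneq R$, and even for $T=\widehat R$-level cubes this holds because $\langle f-g\rangle_{R'}=0$ for all children $R'$ once the parent average is matched. The cubes that matter are therefore $T\subseteq R\subseteq B$. A term $h_S$ with $S\in\D_s(P)$ and $T\in\D_t(P)$ is then supported in $P$, where $P$ is an ancestor of $T$ at most $t$ generations up. For $x\notin\widetilde B$ we need to show that every such $P$ containing $x$ yields a zero contribution.

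This support bookkeeping is the main obstacle. The aim is to show that any such $P$ intersecting $\widetilde B^c$ must have $T$ not contained in $B$, using the property stated above: if $P'$ is maximal in $A^{(n)}$, then its descendants $n-1$ generations down still meet $A^c$. Equivalently, $P\subseteq B^{(t+1)}\subseteq\widetilde B$ whenever $T\subseteq B$, since $|P\cap B|/|P|$ is bounded below by $2^{-dt}$ only along one branch. It may be necessary to iterate the $1/(2^d+1)$ condition generation by generation, as in the regularity step of Proposition \ref{thm:median_of_mt_and_sf}. I would try first to verify $T\subseteq B\Rightarrow P\subseteq B^{(t+1)}$ directly. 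The extra $s$ generations, up to the total $s+t+1$, handle the maximal truncations, whose partial sums are cut at scales in between and whose $h_S$ live $s$ levels below $P$.

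Granted the claim, we conclude by Chebyshev and $L^2$ boundedness:
\[
|\{\Sh_Q^*g>r^{-\frac12}\|\Sh_Q^*\|\lambda\}|\le \frac{r\|g\|_2^2}{\lambda^2}\le r|Q|.
\]
Adding $|\widetilde B|\le (C_0-1)r|Q|$ gives the total bound $C_0r|Q|$, which is the desired percentile inequality.
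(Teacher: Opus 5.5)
\textbf{There is a genuine gap, at your key claim that $\Sh_Q^*f=\Sh_Q^*g$ on $Q\setminus\widetilde B$.} With your choice $g=f\ind_{Q\setminus B}+\sum_R\langle f\rangle_{\widehat R}\ind_R$, the bad part $f-g$ does \emph{not} have mean zero on the stopping cubes. Indeed $\langle f-g\rangle_R=\langle f\rangle_R-\langle f\rangle_{\widehat R}$, and this is necessarily nonzero because $|\langle f\rangle_R|>\lambda\geq|\langle f\rangle_{\widehat R}|$.

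Consequently $\langle f-g,h_T\rangle$ is in general nonzero for every ancestor $T\supsetneq R$ up to $Q$. The corresponding $h_S$ live on $T^{(t)}$, which can be as large as $Q$. Concretely, take $d=1$, $t=s=0$ and all coefficients equal to $1$. Then $\Sh_Q h=(h-\langle h\rangle_Q)\ind_Q$, so $\Sh_Q(f-g)=-\langle f-g\rangle_Q\neq0$ on all of $Q\setminus B$. Your sentence about ``matching the parent average'' is therefore false.

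Nor can the decomposition at the cubes of $B$ be repaired. Any $g$ with $\int_R g=\int_R f$ has $\|g\|_{L^\infty(R)}\geq|\langle f\rangle_R|$. For signed $f$ this is not controlled by $\lambda$: regularity only bounds $|\langle f\rangle_R|$ by $2^d\langle|f|\rangle_{\widehat R}$, not by $|\langle f\rangle_{\widehat R}|$. This is precisely the obstruction in the cancellative setting.

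\textbf{The missing idea} is to perform the Calder\'on--Zygmund decomposition at the maximal dyadic cubes $Q_j$ of the enlargement $B^{(1)}$, using their own averages. This is what the paper does, and it is the dyadic analogue of the stopping time $\tau$ in Proposition \ref{thm:median_of_mt_and_sf} that you meant to imitate. Set $f_2=f\ind_{Q\setminus B^{(1)}}+\sum_j\langle f\rangle_{Q_j}\ind_{Q_j}$ and $f_1=f-f_2$.
\begin{enumerate}[(i)]
\item Maximality forces $Q_j\not\subseteq B$. Otherwise $\widehat{Q_j}$ would have $B$-density at least $2^{-d}>\frac{1}{2^d+1}$ and would lie in $B^{(1)}$. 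Also $Q_j\neq Q$, since $|B^{(1)}|\leq(2^d+1)r|Q|<|Q|$.
\item Hence some $x\in Q_j$ satisfies $|\langle f\rangle_{Q_j}|\leq\MaxAvg_Qf(x)\leq1$, so $|f_2|\leq1$.
\item Meanwhile $f_1$ has mean zero on each $Q_j$, so $\langle f_1,h_T\rangle\neq0$ only if $T\subseteq Q_j$.
\item The same one-generation density argument settles the support bookkeeping you left open. Iterating, $T^{(k)}\subseteq B^{(k+1)}$. So every $h_S$ paired with such a $T$ is supported in $T^{(t)}\subseteq B^{(t+1)}\subseteq B^{(N)}$.
\item The maximal truncations play no role, since each individual term is supported in $T^{(t)}$.
\end{enumerate}
The Chebyshev and $L^2$ step then goes through as you wrote it, with $f_2$ in place of $g$.
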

\begin{proof}
We may assume, without loss of generality, that $\per_Q^r(\MaxAvg_Q f) = 1$. Set $N=t+s+1$ and define
$$
B=\{x \in Q:\, \MaxAvg_Qf(x) > 1\},
$$
so $|B| \leq r|Q|$. Fix a constant $\lambda>0$ whose value we will choose momentarily. By the definition of $\per_Q^r$ we have
  \begin{align*}
    |\{x \in Q:\, \Sh_Q^*f(x) > \lambda\}| &\leq  |B^{(N)}| + |\{x \in Q\setminus B^{(N)} :\, \Sh_Q^*f(x) > \lambda\}| \\
    &\leq  (2^d+1)^Nr|Q| + |\{x \in Q\setminus B^{(N)} :\, \Sh_Q^*f(x) > \lambda\}|.
  \end{align*}
  To estimate the second term on the last display above, we write
  $$
  B^{(1)} = \bigsqcup_j Q_j,
  $$
  where the $Q_j$ are maximal dyadic cubes inside $B^{(1)}$. We next split $f$ into its bad and good parts
  $$
  f = \bracB{f\ind_{B^{(1)}} - \sum_j \langle f \rangle_{Q_j} \ind_{Q_j}} + \bracB{f\ind_{Q \setminus B^{(1)}} + \sum_j \langle f \rangle_{Q_j} \ind_{Q_j}} =: f_1+f_2.  
  $$
  By construction, $\supp(\Sh_Q f_1) \subset B^{(N)}$, while for each $j$, $|\langle f \rangle_{Q_j}| \leq 1$ and so $|f_2| \leq 1$. Therefore, using the $L^2$-boundedness of $\Sh_Q^*$ and sublinearity we get 
  \begin{align*}
    |\{x \in Q\setminus B^{(N)} :\, \Sh_Q^*f(x) > \lambda\}| &\leq \frac{1}{\lambda^2} \int \Sh_Q^* f_2(x)^2 \dd x \\
    &\leq \frac{\|\Sh_Q^*\|_{L^2\to L^2}^2}{\lambda^2} \int_Q |f_2(x)|^2 \dd x \\
    &\leq |Q|\frac{\|\Sh_Q^*\|_{L^2\to L^2}^2}{\lambda^2}.
  \end{align*}
  We now choose $\lambda ={r^{-\frac12}} {\|\Sh_Q^*\|_{L^2\to L^2}}$ and combine the above computations to get
  \begin{align*}
    |\{x \in Q:\, \Sh_Q^* f(x) > \lambda\}| &\leq |Q|\big((2^d+1)^Nr +r\big) = [(2^d+1)^N+1]r|Q|,
  \end{align*}
which implies the assertion.
\end{proof}

We can now prove Theorem \ref{th:theoremA} for Haar shift operators. We provide the precise statement below.

\begin{theorem}\label{th:thmAHaarShifts}
Let $f\in L^{\infty}(\R^d)$. Then there exists $r>0$ and a sparse family of cubes $\Ss \subseteq \D$ such that
\begin{equation}\label{eq:sparseForHaarShifts}
    |\Sh^* f| \lesssim_{s,t} {\|\Sh_Q^*\|_{L^2\to L^2}} \sum_{Q\in \Ss} \per_Q^r(\MaxAvg_Qf) \ind_Q.
\end{equation}
\end{theorem}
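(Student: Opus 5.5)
We will mimic the stopping-time iteration used in the proof of Corollary \ref{cor:thmAMT}, with Proposition \ref{dyadic:median_of_T} playing the role of Proposition \ref{thm:median_of_mt_and_sf}. By the usual reduction (splitting $\R^d$ into the $2^d$ quadrants and using the almost triviality of $\D_{-\infty}$), it suffices to prove a localized version: for a fixed $Q_0\in\D$ and bounded $f$ supported in $Q_0$, we seek a sparse $\Ss\subseteq\D(Q_0)$ with
$$
\Sh_{Q_0}^*f \lesssim_{s,t} \|\Sh^*_{Q_0}\|_{L^2\to L^2}\sum_{Q\in\Ss}\per_Q^r(\MaxAvg_Qf)\ind_Q \quad\text{on } Q_0.
$$
The contribution of the cubes above $Q_0$ is then handled by letting $Q_0$ grow. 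Here we choose $r$ small, say $r<1/(4C_0)$ with $C_0$ as in Proposition \ref{dyadic:median_of_T}.

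Fix $Q_0$ and set $\lambda_{Q_0}:=r^{-1/2}\|\Sh_{Q_0}^*\|_{L^2\to L^2}\per_{Q_0}^r(\MaxAvg_{Q_0}f)$. Consider the set $E=\{\Sh^*_{Q_0}f>\lambda_{Q_0}\}\cup\{\MaxAvg_{Q_0}f>\per_{Q_0}^r(\MaxAvg_{Q_0}f)\}$. By Proposition \ref{dyadic:median_of_T} and the definition of percentile, $|E|\le (C_0+1)r|Q_0|$. To make the stopping cubes genuinely control the tail, we will not stop directly on $E$. Instead we enlarge it to $E^{(N)}$, with $N=s+t+1$ as in the proof of Proposition \ref{dyadic:median_of_T}, which only costs a factor $(2^d+1)^N$. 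For $r$ small this gives $|E^{(N)}|\le\frac12|Q_0|$. We then let $\{P_j\}$ be the maximal dyadic cubes in $E^{(N)}$ and declare them the children of $Q_0$ in $\Ss$.

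Off $E^{(N)}$ we have $\Sh^*_{Q_0}f\le\lambda_{Q_0}$, so it remains to study $x\in P_j$. There we split the truncated sums defining $\Sh^*_{Q_0}f(x)$ according to the scale of the cube $R$ carrying $\alpha^R_{TS}$:
\begin{itemize}
\item[(a)] cubes $R\subseteq P_j$, which give exactly the localized operator $\Sh^*_{P_j}f(x)$;
\item[(b)] cubes $R\supsetneq P_j$, with $R\subseteq Q_0$.
\end{itemize}
For (b), every partial sum over $R\supsetneq P_j$ is constant on subcubes of $P_j$ at generation $\ell(P_j)2^{-s}$. This holds because $h_S$ with $S\in\D_s(R)$ and $R\supseteq\widehat{P_j}$ is constant on such pieces. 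Since $P_j$ is maximal in $E^{(N)}$, the parent $\widehat{P_j}$ meets $(E^{(N)})^c$, and by the property of the iterated enlargements quoted before Proposition \ref{dyadic:median_of_T}, a descendant of bounded generation meets $E^c$. Evaluating at such a point controls the truncated sum over $R\supsetneq P_j$ by $2\lambda_{Q_0}$, plus boundary terms with $R$ within $s+t$ generations of $P_j$. Each of those boundary terms is bounded by $\|\alpha\|_{\ell^\infty}$ times an average of $|f|$ over a cube meeting $E^c$, hence by $\lesssim\per^r_{Q_0}(\MaxAvg_{Q_0}f)$ after a further finite averaging. These Haar-coefficient bounds use that $\langle f,h_T\rangle h_S$ is dominated by averages $\langle f\rangle$ over children of $T$, and on $E^c$ we have $\MaxAvg_{Q_0}f\le\per^r$; this is the same mechanism that gave $|f_2|\le1$ in the proof of Proposition \ref{dyadic:median_of_T}.

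Altogether, on $P_j$ we get $\Sh^*_{Q_0}f\lesssim\lambda_{Q_0}+\Sh^*_{P_j}f$. We then iterate on each $P_j$, with $f\ind_{P_j}$ replaced by the localized data and $\MaxAvg_{P_j}$, $\per_{P_j}^r$. Since $|\bigcup_j P_j|\le\frac12|Q_0|$ at every step, the resulting family is sparse. Because $f$ is bounded, $\Sh^*_{P}f$ is finite a.e., so the iteration terminates a.e. and yields \eqref{eq:sparseForHaarShifts}.

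The main obstacle is step (b). Unlike the martingale transform setting, a Haar shift of complexity $(s,t)$ mixes scales: coefficients $\langle f,h_T\rangle$ with $T$ inside $P_j$ can feed cubes $R\supsetneq P_j$. We must therefore verify carefully that the terms straddling $P_j$ are controlled by evaluating at a nearby point of $E^c$. The $N$-fold enlargement is designed exactly for this, mirroring the support property $\supp(\Sh_Qf_1)\subseteq B^{(N)}$ used in Proposition \ref{dyadic:median_of_T}. If direct control fails, the fallback is to redo the Calderón--Zygmund splitting $f=f_1+f_2$ from that proof at each stopping cube, and bound the straddling terms through $f_2$, which is bounded by $\per^r(\MaxAvg f)$.
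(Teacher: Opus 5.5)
Your proposal is correct and is essentially the paper's proof. It uses the same exceptional set $E$ (the paper's $\Omega$), the same enlargement $E^{(N)}$ with $N=s+t+1$, stopping at its maximal dyadic cubes, splitting each truncation at the scale of $P_j$, and iterating on the stopping cubes. Two minor remarks. First, $r<1/(4C_0)$ is not small enough to give $|E^{(N)}|\le\frac12|Q_0|$: you need $(2^d+1)^N(C_0+1)r\le\frac12$, e.g.\ the paper's $r=\frac{1}{2(C_0+1)^2}$. Second, every cube of $\D_{N-1}(P_j)$ meets $E^c$ and $s\le N-1$, so you can pick $y\in E^c$ in the same generation-$s$ subcube of $P_j$ as $x$. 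Then the entire sum over $R\supsetneq P_j$ coincides at $x$ and $y$, and the boundary terms you estimate via averages over children of $T$ are unnecessary. The paper needs them only because it evaluates at a point of $\widehat{P_j}\setminus E^{(N)}$.
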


\begin{proof}[Proof (of Theorem \ref{th:theoremA} for Haar shifts)] By standard reductions,  we may assume that $f$ is a mean-zero function and that there exists $\tilde{Q}_0 \in \D$ such that $\supp(f) \subseteq \tilde{Q}_0$. Since $\langle f \rangle_{\tilde{Q}_0}=0$, defining $Q_0=\tilde{Q}_0^{(s+t+1)}$ yields
$$
\Sh^* f(x) = \Sh_{Q_0}^* f(x), \qquad x \in \R^d.
$$
Fix $r=\frac{1}{2(C_0+1)^2}$, where $C_0$ is the constant in the statement of Proposition \ref{dyadic:median_of_T}. We are going to prove the following claim: there exists a constant $C$ such that for each $Q\in \D$, there exists a pairwise disjoint collection $\F(Q)=\{Q_j\}_j\subseteq \D(Q)$ such that $\sum_j|Q_j| \leq \frac{1}{2}|Q|$ and
  \begin{align*}
    \Sh_Q^*f(x) \leq C \, \per_Q^r(\MaxAvg_Qf) + \sum_{j} |\Sh_{Q_j}^* f(x)|, \quad x\in Q.
  \end{align*}
Indeed, set 
$$
\lambda = r^{-\frac12}\sup_{Q\in \D} \left\{\|\Sh_Q^*\|_{L^2\to L^2}\right\} \per_Q^r(\MaxAvg_Qf),
$$ 
and consider the set 
$$
\Omega = \{x \in Q:\, \Sh_Q^*f(x) > \lambda\} \cup \{x \in Q:\, \MaxAvg_Qf(x) > \per_Q^r(\MaxAvg_Qf)\}.
$$ 
By Proposition \ref{dyadic:median_of_T} and the weak $(1,1)$-boundedness of $\MaxAvg_\D$, we have $|\Omega| \leq (C_0+1)r|Q|$, and so
$$
|\Omega^{(N)}|\leq (2^d+1)^N |\Omega| \leq \tfrac12 |Q|.
$$
We next choose $\F=\{Q_j\}_j$ to be the family of maximal cubes $R\in \D(Q)$ such that $R\subseteq \Omega^{(N)}$, getting
$$
\Omega^{(N)}=\bigsqcup_j Q_j.
$$
We now split
  \begin{align*}
    \Sh_Q^* f &\leq \lambda \ind_{Q \setminus \Omega^{(N)}} + \sum_{j} \ind_{Q_j} \Sh_Q^* f \\
    & \leq \lambda + \sum_{j} \ind_{Q_j}\Sh_Q^* f.
  \end{align*}
Fix $j$ and $x\in Q_j$, and assume $Q\in \D_{m_1}$, $Q_j \in \D_{m_2}$, which holds for some $m_2>m_1$. We may write
\begin{align*}
   \Sh_Q^* f(x) & = \sup_{m_1 \leq \ell < m} \,\absB{\sum_{k=\ell}^m \sum_{x\in R\in \D_k} \sum_{T\in \D_t(R)} \sum_{S \in \D_s(R)} \alpha_{TS}^R \langle f, h_T\rangle h_S(x)} \\
   & \leq \sup_{m_1 \leq \ell < m} \,\absB{\sum_{k=\ell}^{\min\{m,m_2\}} \sum_{x\in R\in \D_k} \sum_{T\in \D_t(R)} \sum_{S \in \D_s(R)} \alpha_{TS}^R \langle f, h_T\rangle h_S(x)}
    + \Sh_{Q_j}^* f(x).
\end{align*}
The second term on the last display above is exactly the one we want to iterate, so it only remains to estimate the first. To do so we rewrite it as
\begin{align*}
\sup_{m_1 \leq \ell < m} \,\absB{\sum_{k=\ell}^{\min\{m,m_2\}} \sum_{x\in R\in \D_k} & \sum_{T\in \D_t(R)} \sum_{S \in \D_s(R)} \alpha_{TS}^R \langle f, h_T\rangle h_S(x)} \\
& = \sup_{0\leq m \leq m_2-m_1}\absB{\sum_{\ell=1}^{m} \sum_{T\in \D_t(Q_j^{(\ell)})} \sum_{S \in \D_s(Q_j^{(\ell)})} \alpha_{TS}^{Q_j^{(\ell)}} \langle f, h_T\rangle h_S(x)} \\
& \leq \|\alpha\|_{\ell^\infty}\sum_{\ell=1}^{N+1} \sum_{T\in \D_t(Q_j^{(\ell)})} \sum_{S \in \D_s(Q_j^{(\ell)})} |\langle f, h_T\rangle h_S(x)|\\
& \hspace{0.5cm}+ \sup_{0\leq m \leq m_2-m_1} \absB{\sum_{\ell=N+2}^{m} \sum_{T\in \D_t(Q_j^{(\ell)})} \sum_{S \in \D_s(Q_j^{(\ell)})} \alpha_{TS}^{Q_j^{(\ell)}} \langle f, h_T\rangle h_S(x)},
\end{align*}
with the last sum being possibly empty. For each $R\in\D$ and $T\in\D_t(R)$, $S\in \D_s(R)$ we have
$$
|\langle f, h_T\rangle h_S| \lesssim_{s,t} \sup_{P\in \D_1(T)} |\langle f \rangle_P|.
$$
On the other hand, if $T\in \D_t(Q_j^{(\ell)})$ for $\ell>0$, then the construction of $\Omega^{(N)}$ implies that $|\langle f \rangle_P| \leq \per_Q^r(\MaxAvg_Qf)$ for all $P\in \D_1(T)$. Therefore, the first term on the right-hand side on the last display above can be estimated by a constant depending on $t,s$ and $d$ times $\per_Q^r(\MaxAvg_Qf)$. For the remaining term, we observe that the sum
$$
\sum_{\ell=N+2}^{m} \sum_{T\in \D_t(Q^{(\ell)}_j)} \sum_{S \in \D_s(Q^{(\ell)})} \alpha_{TS}^{Q^{(\ell)}} \langle f, h_T\rangle h_S
$$
is constant over $Q_j^{(1)}$, and so by maximality of $Q_j$ there must exist $y\in Q_j^{(1)} \setminus \Omega^{(N)}$ and so
\begin{align*}
\absB{\sum_{\ell=N+2}^{m} \sum_{T\in \D_t(Q_j^{(\ell)})} & \sum_{S \in \D_s(Q_j^{(\ell)})} \alpha_{TS}^{Q_j^{(\ell)}} \langle f, h_T\rangle h_S(x)} \\
&= \absB{\sum_{\ell=N+2}^{m} \sum_{T\in \D_t(Q_j^{(\ell)})} \sum_{S \in \D_s(Q_j^{(\ell)})} \alpha_{TS}^{Q_j^{(\ell)}} \langle f, h_T\rangle h_S(y)} 
 \leq |\Sh_Q^*(y)| \leq \lambda.
\end{align*}
This finishes the proof of the claim. Finally, we construct the sparse family as follows: initialize $\Ss=\cbrace{Q_0}$, and apply the claim to $Q_0$ and add the cubes in $\F(Q_0)$ to $\Ss$. Now apply the claim to each $Q\in \F(Q_0)$ and repeat the process inductively. The family $\Ss$ is sparse by construction, and the assertion follows.
\end{proof}

\begin{remark}
If one tracks the constant $C=C(t,s)$ in Theorem \ref{th:thmAHaarShifts}, one  sees that $C \gtrsim 2^{2d(s+t)}$, which is much larger than the optimal $s+t$ that is available via non-cancellative sparse domination, see for example \cite{CR16}. This means that our Theorem \ref{th:thmAHaarShifts} cannot be used as a tool to obtain a cancellative sparse domination result for Calder\'on-Zygmund operators via the dyadic representation theorem \cite{Hy12,Pe07}. In Section \ref{sec:sec3} we will therefore study this case separately. It remains as an interesting open question whether one can significantly improve the dependence on $(t,s)$ in the statement of Theorem \ref{th:theoremA}.
\end{remark}

\subsection{Failure of cancellative sparse domination without enlargement} In this subsection we work on the unit interval $I = [0,1)$ with the canonical dyadic system $\D$. We will show that the inequality 
\begin{equation*}
  \MaxAvg_\D f \lesssim \sum_{J \in \Ss} |\langle f \rangle_J| \ind_J
\end{equation*}
cannot hold in general for a sparse collection of cubes $\Ss \subseteq \D$. Of course, this immediately disproves similar estimates for any multiscale operator whose behavior is similar to that of $\MaxAvg_\D$, like a martingale transform or $\Sh$. For $J \in \D(I)$, let $\varphi_J$ be the affine bijection $J \to I$. One can make this unique by imposing that $\varphi_J$ is orientation-preserving. For convenience, we will write
\begin{align*}
  f_J = (f \circ \varphi_J) \ind_J
\end{align*}
so that in particular we have
\begin{align*}
  \langle f_J \rangle_J = \langle f \rangle_{I}.
\end{align*}
For $N \geq 1$ let $\D_N'$ be the subcollection of $\D_N(I)$ obtained by removing
\begin{itemize}
    \item The two intervals contained in $[0, 2^{-N+1})$.
    \item The two intervals contained in $[1-2^{-N+1},1)$.
\end{itemize}
Enumerate the intervals in $\D_{N}([0,1))$ as $\{J_0, J_1, J_2, \dots, J_{2^{N}-1}\}$.

\begin{lemma}\label{lem:auxiliaryCounterexample}
  Let $f\in L^1(I)$. Then, the transformation
  $$
  T_Nf = 2^{N-1}\sum_{n=2}^{2^{N}-1} (-1)^n f_{J_n} +2^{N-1} f_{[1-2^{-N+1},1)}
  $$
  has the following properties:
  \begin{enumerate}[(i)]
    \item\label{it:T1} $|\langle T_Nf \rangle_J| \geq 2^{N-1}|\langle f\rangle_I|$ for all $J \in \D_N'$.
    \item\label{it:T2}  $\langle T_Nf \rangle_{[0, 2^{-k})} = 0$ for all $k \in \{1, 2, \dots, N\}$
    \item\label{it:T3}  $\langle T_Nf \rangle_I = \langle f \rangle_I$.
  \end{enumerate}
\end{lemma}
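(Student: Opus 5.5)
The plan is a direct computation. Everything rests on one observation about the rescaled copies $f_J$. Since $\varphi_J$ is an affine bijection $J\to I$, each $f_J$ is supported on $J$ and satisfies $\int_J f_J = |J|\langle f\rangle_I$. Hence for any interval $K$ containing $J$ the contribution of $f_J$ to $\int_K$ is exactly $|J|\langle f\rangle_I$, and for $K$ disjoint from $J$ it is zero. So each of (i)--(iii) reduces to bookkeeping: determine which of the pieces $f_{J_n}$, $2\le n\le 2^N-1$, and $f_{[1-2^{-N+1},1)}$ are supported inside the interval in question, and add up their signed masses.

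First, (iii). Each $J_n$ has length $2^{-N}$, so the alternating sum contributes
\[
2^{N-1}\cdot 2^{-N}\langle f\rangle_I \sum_{n=2}^{2^N-1}(-1)^n .
\]
This sum has $2^N-2$ terms, an even number starting with $+1$, so it vanishes. The last piece contributes $2^{N-1}\cdot 2^{-N+1}\langle f\rangle_I=\langle f\rangle_I$, and dividing by $|I|=1$ gives (iii).

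Next, (i). An interval $J\in\D_N'$ is $J_n$ for some $2\le n\le 2^N-3$. The only piece supported on it is $(-1)^n2^{N-1}f_{J_n}$: the other $J_m$ are disjoint from $J_n$, and $[1-2^{-N+1},1)=J_{2^N-2}\cup J_{2^N-1}$ misses it. Hence $\langle T_Nf\rangle_J=(-1)^n2^{N-1}\langle f\rangle_I$, which gives (i) with equality.

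Finally, (ii). The interval $[0,2^{-k})$ is the union of $J_0,\dots,J_{2^{N-k}-1}$. For $k=N$ it is just $J_0$, on which $T_Nf$ vanishes. For $k<N$ it contains exactly the pieces $J_2,\dots,J_{2^{N-k}-1}$, again an even number of alternating terms, so their total mass cancels. It remains to check that $[0,2^{-k})$ does not meet $[1-2^{-N+1},1)$, that is, $2^{-k}\le 1-2^{-N+1}$. This holds whenever $N\ge2$ and $k\ge1$. This overlap condition is the only delicate point. For $N=1$ the last piece lives on all of $I$ and $\langle T_1f\rangle_{[0,1/2)}$ is the average of $f$ over the left half of $I$ rather than $0$, so in that case I would state the lemma for $N\ge2$ (presumably only large $N$ is needed in the counterexample). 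Apart from this, the argument is just parity counting.
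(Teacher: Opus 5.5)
Your proof is correct and follows the paper's approach: direct bookkeeping of the masses $\int f_J = |J|\langle f\rangle_I$. The paper writes out only the cancellation for (iii) and calls (i) and (ii) immediate from the definition. You are also right that (ii) fails for $N=1$, where $T_1f=f$; this does no harm, since the counterexample only applies $T_N$ with $N=nN_0\geq 3$.
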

\begin{proof}
Properties \ref{it:T1} and \ref{it:T2} follow directly from the definition of $T_N$. Furthermore, we have
  \begin{align*}
    \langle T_Nf \rangle_I &= 2^{N-1}\int f_{[1-2^{-N+1},1)} = \langle f_{[1-2^{-N+1},1)} \rangle_{[1-2^{-N+1},1)},
  \end{align*}
since all the other terms cancel out.
\end{proof}

\begin{figure}[H]
  \begin{tikzpicture}[scale=0.5]
    \draw[thick] (0, 0) -- (30, 0);
    \draw[thick] (0, 10) -- (30, 10);
    \draw[thick] (0, 0) -- (0, 10);
    \draw[thick] (30, 0) -- (30, 10);

    \draw[thick] (0, 2) -- (30, 2);
    \draw[thick] (0, 4) -- (30, 4);
    \draw[thick] (0, 6) -- (30, 6);
    \draw[thick] (0, 8) -- (30, 8);

    \draw[thick] (15, 0) -- (15, 8);
    
    \draw[thick] (7.5, 0) -- (7.5, 6);
    \draw[thick] (22.5, 0) -- (22.5, 6);
    
    \draw[thick] (3.75, 0) -- (3.75, 4);
    \draw[thick] (11.25, 0) -- (11.25, 4);
    \draw[thick] (18.75, 0) -- (18.75, 4);
    \draw[thick] (26.25, 0) -- (26.25, 4);
    
    \draw[thick] (1.875, 0) -- (1.875, 2);
    \draw[thick] (5.625, 0) -- (5.625, 2);
    \draw[thick] (9.375, 0) -- (9.375, 2);
    \draw[thick] (13.125, 0) -- (13.125, 2);
    \draw[thick] (16.875, 0) -- (16.875, 2);
    \draw[thick] (20.625, 0) -- (20.625, 2);
    \draw[thick] (24.375, 0) -- (24.375, 2);
    \draw[thick] (28.125, 0) -- (28.125, 2);

    \node at  (15, 9) {$\langle f \rangle$};

    \node at  (7.5, 7) {$0$};
    \node at  (22.5, 7) {$2\langle f \rangle$};

    \node at  (3.75, 5) {$0$};
    \node at  (11.25, 5) {$0$};
    \node at  (18.75, 5) {$0$};
    \node at  (26.25, 5) {$2^2 \langle f \rangle$};

    \node at  (1.875, 3) {$0$};
    \node at  (5.625, 3) {$0$};
    \node at  (9.375, 3) {$0$};
    \node at  (13.125, 3) {$0$};
    \node at  (16.875, 3) {$0$};
    \node at  (20.625, 3) {$0$};
    \node at  (24.375, 3) {$0$};
    \node at  (28.125, 3) {$2^3\langle f \rangle$};
    
    \node at  (0.9375, 1) {$0$};
    \node at  (2.8125, 1) {$0$};
    
    \node at  (4.6875, 1) {$2^3f$};
    \node at  (6.5625, 1) {$-2^3f$};
    \node at  (8.4375, 1) {$2^3f$};
    \node at  (10.3125, 1) {$-2^3f$};
    \node at  (12.1875, 1) {$2^3f$};
    \node at  (14.0625, 1) {$-2^3f$};
    \node at  (15.9375, 1) {$2^3f$};
    \node at  (17.8125, 1) {$-2^3f$};
    \node at  (19.6875, 1) {$2^3f$};
    \node at  (21.5625, 1) {$-2^3f$};
    \node at  (23.4375, 1) {$2^3f$};
    \node at  (25.3125, 1) {$-2^3f$};
  \end{tikzpicture}
  \caption{Construction of $T_4f$.}
\end{figure}

We are now ready to build our counterexample.

\begin{proposition}\label{prop:counterexample}
Fix $C_0>1$. There exists a constant $C_1>0$ and a family $\{f_n\}_{n\geq 0}$ with the following property: if $\Ss_n$ is an $\eta_n$-sparse family such that
\begin{equation}\label{eq:counterexample}
\MaxAvg_\D f_n \leq C_0 \sum_{J \in \Ss_n} |\langle f_n \rangle_J| \ind_J,   
\end{equation}
then $\eta_n\leq C_1n^{-1}$.    
\end{proposition}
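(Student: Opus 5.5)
The plan is to build $f_n$ by iterating the transformation of Lemma \ref{lem:auxiliaryCounterexample}. Fix $N$ large in terms of $C_0$, say with $2^{N-1}\geq 8C_0$. Set $f_0=\ind_I$ and $f_{n}=T_N f_{n-1}$, so that $f_n$ has $n$ nested generations of the self-similar pattern of Figure 1, with the $j$-th generation living at depth about $jN$. I would then show that the domination \eqref{eq:counterexample} forces, for each $j\leq n$, the sparse family $\Ss_n$ to contain an interval of depth comparable to $jN$ above a fixed proportion of $I$. A packing argument then gives $\eta_n\lesssim n^{-1}$.

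\textbf{Step 1: bookkeeping of averages.} Using the three properties of Lemma \ref{lem:auxiliaryCounterexample} and the scaling identity $\langle f_J\rangle_J=\langle f\rangle_I$, I would compute every dyadic average of $f_n$ by induction on $n$. Call an interval a generation-$j$ piece if it arises as some $J\in\D_N'$ at the $j$-th level of the iteration. The averages over generation-$j$ pieces equal $\pm 2^{j(N-1)}$. Strictly between generation levels there are only two kinds of intervals:
\begin{itemize}
\item left-spine intervals $[a,a+2^{-k}|J|)$ and unions of an even number of consecutive alternating pieces, whose averages are $0$ by (ii) and by the sign alternation;
\item right-spine intervals, where the averages grow like $2^{i}\cdot 2^{(j-1)(N-1)}$.
\end{itemize}

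\textbf{Step 2: a large set of forcing points.} In each generation-$(j-1)$ piece $P$, consider the portion $Z_P$ covered by the two removed left intervals of $\D_N(P)$, i.e. the image of $[0,2^{-N+1})$. This is where $f$ vanishes identically at that level (the removed right intervals are handled by the spine term). For $x\in Z_P$ we have $\MaxAvg_\D f_n(x)\geq |\langle f_n\rangle_P|=2^{(j-1)(N-1)}$. Every interval strictly inside $P$ containing $x$, however, has average $0$ by (ii), or is a deeper piece only if $x$ lies in a deeper generation. To avoid that last case, I would rather use points $x$ for which the relevant maximum is attained at generation $j-1$ and all smaller intervals through $x$ have zero average.

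At such $x$, \eqref{eq:counterexample} together with $2^{N-1}\gg C_0$ says the following. The ancestors of $P$ contribute at most a geometric sum bounded by $2\cdot 2^{(j-2)(N-1)}\cdot O(1)$, which is less than $2^{(j-1)(N-1)}/C_0$. Hence $P$ itself, or an interval between $P$ and its generation-$(j-2)$ parent, must belong to $\Ss_n$. Since $x$ ranges over a set of positive proportion of $P$ and the averages are constant in a way that makes this independent of $x$, this applies to every generation-$(j-1)$ piece $P$.

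\textbf{Step 3: packing.} The generation-$j$ pieces cover a proportion $(1-2^{2-N})^j\geq c>0$ of $I$ for all $j\leq n$, provided $n$ is not too large relative to $2^N$; otherwise I would allow $N$ to grow slowly with $n$. Hence
\[
\sum_{J\in\Ss_n}|J|\gtrsim \sum_{j\leq n} c\,|I|\gtrsim n|I|.
\]
On the other hand, $\eta_n$-sparsity gives disjoint sets $E_J\subseteq J$ with $|E_J|\geq\eta_n|J|$, so $\eta_n\sum_{J\in\Ss_n}|J|\leq|I|$. Together these yield $\eta_n\leq C_1 n^{-1}$.

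\textbf{Main obstacle.} The delicate step is Step 2, namely the exact control of the spine averages. The right-end term $2^{N-1}f_{[1-2^{-N+1},1)}$ produces intervals whose averages are as large as those of the pieces, and these could let $\Ss_n$ dominate using few intervals. I would handle this by restricting the forcing points $x$ to the left-spine zero regions, where all intervals below $P$ have vanishing averages. If the constant accounting fails there, I would replace $f_0=\ind_I$ by a suitably mean-adjusted seed. Alternatively, I would choose $N=N(C_0)$ larger so that the geometric sums from coarser generations are negligible.
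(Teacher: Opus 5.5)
Your construction (iterate $T_N$, use the zero regions $Z_P$ to force an interval of $\Ss_n$ in every generation, then pack) is correct, but only once the fallback in Step 3 is made mandatory rather than optional. With $N$ fixed in terms of $C_0$ alone, the generation-$j$ pieces have total measure $(1-2^{2-N})^j$. Hence $\sum_{j\le n}(1-2^{2-N})^j\le 2^{N-2}$ is bounded uniformly in $n$, and the packing gives only $\eta_n\lesssim 2^{-N}$, never $n^{-1}$. Since $C_0$ is fixed and $n\to\infty$, the ``otherwise'' case is the only relevant one. Choose $N=N_n$ with $2^{N_n}\ge\max\{16C_0,8n\}$. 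Then $(1-2^{2-N_n})^{j}\ge(1-\tfrac{1}{2n})^{n}\ge\tfrac12$ for $j\le n$, and Step 3 yields $\eta_n\le 2/n$.

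This is where your route genuinely differs from the paper's. The paper keeps $N_0=N_0(C_0)$ but increases the depth from layer to layer, $f_n=T_{nN_0}f_{n-1}$, so the surviving proportion is bounded below by $\prod_{k\ge1}(1-2^{1-kN_0})>0$, and it gets a single nested sequence. Your $f_n$ are rebuilt for each $n$ with a uniform depth $N_n\approx\log_2 n$. That is equally admissible, since the statement only asks for some family, and it keeps the bookkeeping of Step 1 identical across generations.

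In Step 2, the bound $O(2^{(j-2)(N-1)})$ holds only for the intervals containing the parent $P'$. It is at most $5\cdot 2^{(j-2)(N-1)}$, so $2^{N-1}\ge 8C_0$ suffices. The right-spine intervals of $P'$ that contain $P$ have averages as large as $2^{N-2}\cdot 2^{(j-2)(N-1)}$, and these really can replace $P$ in $\Ss_n$.

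This is harmless, and your ``main obstacle'' is not one. The forced interval satisfies $P\subseteq Q_P\subsetneq P'$, so for each fixed $j$ the forced intervals cover every generation-$(j-1)$ piece. A single spine interval serving many pieces costs nothing, because you count lengths. Also $|Q_P|\in[2^{-(j-1)N},2^{-(j-2)N})$, so forced intervals from different generations are distinct. Step 3 uses this fact silently and you should state it; no modified seed is needed. Your formulation is in fact more accurate than the paper's sketch, which claims $\Ss_n$ must contain the pieces of $\D'_{N}$ themselves.
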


\begin{proof}
Pick $N_0:=1+2\ceil{\log_2(C_0)}$ so that $2^{N_0-1}>C_0$. We define $\{f_n\}_{n\geq 0}$ inductively as follows:
\begin{align*}
  f_n = \begin{cases}
    1, &\text{if } n = 0, \\
    T_{nN_0}f_{n-1}, &\text{if } n \geq 1.
  \end{cases}
\end{align*}
$\MaxAvg_\D f_0 =1$ everywhere on $I$ and so $\Ss_0=\{I\}$ is a $1$-sparse family that satisfies \eqref{eq:counterexample}. Take now $n=1$. We have
\begin{align*}
  \MaxAvg_\D f_1(x) = \begin{cases}
    1, &\text{if } 0\leq x < 2^{-N_0+1}, \\
    2^{N_0-1}, &\text{if } 2^{-N_0+1} \leq x < 1.
  \end{cases}
\end{align*}
By Lemma \ref{lem:auxiliaryCounterexample}, $\langle f_1 \rangle_{[0,2^{-k})}=0$ except for $k = 0$, and so $\Ss_1$ must include $I$. By our choice of $N_0$, for \eqref{eq:counterexample} to hold $\Ss_1$ must include a partition of $[2^{-N_0+1},1)$ formed with cubes of side length at most $2^{-N_0}$. Therefore, the Carleson packing constant $\Car(\Ss_1)$ of $\Ss_1$ is at least
$$
\Car(\Ss_1) \geq 1 + (1-2^{-N_0+1}) = 2 - 2^{-N_0+1}.
$$
Therefore, $\Ss_1$ is at most $\eta_1=(2 - 2^{-N_0+1})^{-1}$-sparse by the Carleson packing characterization of sparse families (see \cite{LN15}).

Next, $\MaxAvg_\D f_2$ takes three different values, namely $1$, $2^{N_0-1}$, and 
$$
2^{N_0-1}\cdot 2^{2N_0-1} > C_0(1+2^{N_0-1}).
$$ 
The properties of the operator $T_N$ imply that $\Ss_2$ must then include $I$, $\D_{N_0}'$, $[1-2^{-N+1},1)$, plus an additional partition of a set of Lebesgue measure equal to $$(1-2^{-N_0+1})(1-2^{-2N_0+1})$$ formed with cubes of side length at most $2^{-3N_0}$. The corresponding Carleson constant is now 
$$
\Car(\Ss_2) \geq 1 + (1-2^{-N_0+1}) + (1-2^{-N_0+1})(1-2^{-2N_0+1}).
$$
Iterating the above reasoning, we find
$$
\Car(\Ss_n) \geq \sum_{j=0}^n \prod_{k=1}^j (1-2^{-kN_0+1}) \gtrsim n, 
$$
which yields $\eta_n \lesssim n^{-1}$, as desired.
\end{proof} 

\section{Calder\'on--Zygmund operators - Euclidean harmonic analysis}\label{sec:sec3}
We now turn to cancellative sparse domination in the Euclidean space $\R^d$ for Calder\'on--Zygmund operators in Theorem \ref{th:theoremC}, which will be based on a general sparse domination principle from \cite{LLO21}. Throughout this section, we omit dependence on $d$ in all implicit constants.

We start by recalling a special case of the main result in \cite{LLO21}.
 For every cube $Q \subseteq \R^d$  let $f_Q \colon Q\to {\R}$ be a measurable function. For  $R \in \D(Q)$ let
 $$
f_{Q,R}(x) := f_Q(x)-f_R(x), \qquad x \in R,
 $$
and define the local grand sharp maximal function   $\MaxAvg_Q^{\#}f \colon Q \to \R$ of the family $\cbrace{f_Q}$ by 
\begin{equation}\label{eq:grandmaxtrunc}
\MaxAvg_Q^{\#}f(x):=\sup_{\substack{R\in {\D}(Q):\\x \in R}}\,\esssup_{x',x'' \in R} \, \absb{f_{Q,R}(x')-f_{Q,R}(x'')}.
\end{equation}

\begin{theorem}[{\cite[Theorem 3.2]{LLO21}}] \label{th:theorem32Emiel}
Let $r= 2^{-d-3}$ and for every cube $Q \subseteq \R^d$ let $f_Q\colon Q \to \R$ be measurable. Then, for any cube $Q_0 \subseteq \R^d$, there exists a $\frac12$-sparse family ${\Ss}\subseteq \D(Q_0)$ such that 
$$
|f_{Q_0}(x)|\lesssim \sum_{Q\in {\Ss}}\left[\per_Q^{r}(|f_Q|)+\per_Q^r(\MaxAvg^{\#}_Qf)\right]\ind_Q(x), \qquad x \in Q_0.
$$
\end{theorem}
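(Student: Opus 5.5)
The plan is a Calderón–Zygmund stopping-time argument. I would prove a one-step recursive estimate on an arbitrary cube $Q$ and then iterate it starting from $Q_0$.

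\textbf{One-step estimate.} Fix a cube $Q$ and define the exceptional set
$$
E:=\{x\in Q:\ |f_Q(x)|>\per_Q^r(|f_Q|)\}\cup\{x\in Q:\ \MaxAvg^\#_Qf(x)>\per_Q^r(\MaxAvg_Q^\#f)\}.
$$
By the defining property of percentiles, $|E|\le 2r|Q|$. Let $\{P_j\}$ be the maximal cubes $P\in\D(Q)$ with $|P\cap E|>2^{-d-1}|P|$. The usual weak-type count gives
$$
\sum_j|P_j|\le 2^{d+1}|E|\le 2^{d+2}r|Q|=\tfrac12|Q|,
$$
which is exactly where the choice $r=2^{-d-3}$ enters. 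By the Lebesgue differentiation theorem, almost every $x\in Q\setminus\bigcup_jP_j$ lies outside $E$, so $|f_Q(x)|\le\per_Q^r(|f_Q|)$ there.

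\textbf{Points inside a stopping cube.} For $x\in P_j$, write $f_Q(x)=f_{Q,P_j}(x)+f_{P_j}(x)$. The term $f_{P_j}$ is the one we iterate, so it remains to control $f_{Q,P_j}$ on $P_j$.
\begin{itemize}
\item Maximality of $P_j$ means its dyadic parent $\widehat{P_j}$ satisfies $|\widehat{P_j}\cap E|\le 2^{-d-1}|\widehat{P_j}|=\tfrac12|P_j|$. Hence $|P_j\setminus E|\ge\tfrac12|P_j|$.
\item The set $\{y\in P_j:\ |f_{P_j}(y)|\le \per^r_{P_j}(|f_{P_j}|)\}$ has measure at least $(1-r)|P_j|>\tfrac12|P_j|$.
\item So these two sets intersect in positive measure, and we can pick a generic point $x'$ in both, avoiding the null sets where the essential suprema misbehave.
\end{itemize}
Then
$$
|f_{Q,P_j}(x)|\le |f_{Q,P_j}(x)-f_{Q,P_j}(x')|+|f_Q(x')|+|f_{P_j}(x')|.
$$
The first term is at most $\esssup_{y,z\in P_j}|f_{Q,P_j}(y)-f_{Q,P_j}(z)|\le \MaxAvg^\#_Qf(x')\le \per_Q^r(\MaxAvg^\#_Qf)$. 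The second is at most $\per_Q^r(|f_Q|)$. The third is at most $\per_{P_j}^r(|f_{P_j}|)$. This yields, a.e. on $Q$,
$$
|f_Q|\le 2\big[\per_Q^r(|f_Q|)+\per_Q^r(\MaxAvg^\#_Qf)\big]\ind_Q+\sum_j\big[\per^r_{P_j}(|f_{P_j}|)\ind_{P_j}+|f_{P_j}|\ind_{P_j}\big].
$$

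\textbf{Iteration.}
\begin{itemize}
\item Start with $\Ss=\{Q_0\}$, apply the one-step estimate to $Q_0$, add the resulting $P_j$ to $\Ss$, and apply the estimate again to each $|f_{P_j}|$.
\item The extra term $\per^r_{P_j}(|f_{P_j}|)$ is already a summand attached to $P_j\in\Ss$, so it is absorbed with constant $3$.
\item Sparsity: the sets $E_Q:=Q\setminus\bigcup_j P_j$ are pairwise disjoint and satisfy $|E_Q|\ge\tfrac12|Q|$, so $\Ss$ is $\tfrac12$-sparse.
\item Termination: the union of the $k$-th generation cubes has measure at most $2^{-k}|Q_0|$. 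Hence a.e. $x$ lies in only finitely many generations, the remainder term $|f_{P}|\ind_P$ eventually vanishes, and the pointwise bound follows.
\end{itemize}

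The main obstacle is the second step: bounding $f_{Q,P_j}$ pointwise on all of $P_j$. The sharp function only controls oscillation, so one needs an anchoring point $x'$ where $f_Q$ and $f_{P_j}$ are simultaneously controlled. This is why the density threshold $2^{-d-1}$, and hence $r=2^{-d-3}$, must be chosen so that the good sets for $f_Q$, for $\MaxAvg^\#_Q f$ and for $f_{P_j}$ all intersect in $P_j$. The remaining care is in handling null sets in the essential suprema.
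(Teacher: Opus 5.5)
Your argument is correct. The paper gives no proof of this statement and simply imports it from \cite{LLO21}. What you wrote is essentially the Lerner-type stopping-time argument behind that result: an exceptional set $E$ with $|E|\le 2r|Q|$, maximal dyadic cubes where $E$ has density above $2^{-d-1}$, an anchor point in $P_j\setminus E$ at which also $|f_{P_j}|\le \per^r_{P_j}(|f_{P_j}|)$, and an iteration in which the extra term is absorbed by the summand of $P_j$.

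As you correctly obtain, the estimate holds for almost every $x\in Q_0$, and that is the right reading of the statement. The right-hand side does not change if $f_{Q_0}$ is modified on a null set, so a bound at every point cannot hold.
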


To prove Theorem \ref{th:theoremC}, we will apply Theorem \ref{th:theorem32Emiel} with \begin{equation}\label{eq:choicefQ}
    f_Q := T(f\psi_{2Q}), 
\end{equation} where $T$ is an $s$-smooth Calder\'on--Zygmund operator, $f \in L^\infty_c(\R^d)$ and $\psi_{2Q}$ is an $L^{\infty}$-normalized smooth bump function supported on $3Q$ with $\psi_{2Q}\equiv 1$ on $2Q$. Let us start by introducing the involved operators.

\begin{definition}\label{def:CZO}
    Let $T \colon L^2(\R^d) \to L^2(\R^d)$ be a bounded linear operator and $s>0$. We call $T$ an $s$-smooth Calder\'on--Zygmund operator if there is a kernel $K \colon \R^d \times \R^d \setminus\cbrace{x=y}$ such that for all $f \in L^\infty_c(\R^d)$ and $x \notin \supp(f)$ we have
$$
Tf(x) = \int_{\R^d}K(x,y)f(y) \dd y
$$
and, writing $s = m+\delta$ with $m\in \N$ and $\delta \in (0,1]$, there is a constant $C_K>0$  such that
\begin{align*}
\abs{\partial^\alpha_y K(x,y)} &\leq C_K \frac{1}{\abs{x-y}^{d+\abs{\alpha}}}, && \abs{\alpha} \leq m,\\
\abs{\partial^\alpha_y K(x,y)-\partial^\alpha_y K(x,y')} &\leq C_K \frac{\abs{y-y'}^\delta}{\abs{x-y}^{d+\abs{\alpha}+\delta}}, && \abs{\alpha} = m,
\end{align*}and, in addition, there is an $\varepsilon>0$ such that
\begin{align*}
\abs{ \partial^\alpha_y K(x,y)- \partial^\alpha_y K(x',y)} &\leq C_K  \frac{ \abs{x-x'}^\varepsilon}{\abs{x-y}^{d+\abs{\alpha}+\varepsilon}} && \abs{\alpha} \leq m,  \\
\abs{\partial^\alpha_y K(x,y)-\partial^\alpha_y K(x',y)-\partial^\alpha_y K(x,y')+\partial^\alpha_y K(x',y')} &\leq C_K \frac{ \abs{y-y'}^\delta \abs{x-x'}^\varepsilon}{\abs{x-y}^{d+\abs{\alpha}+\delta+\varepsilon}}, && \abs{\alpha} = m,
\end{align*}
for $x\neq y$ and $\abs{x-x'},\abs{y-y'} \leq \frac12\abs{x-y}$.
\end{definition}
Note that Definition \ref{def:CZO} is not symmetric in the $x$- and $y$-variables. Indeed, we assume  smoothness of order $s$ in the $y$-variable, whereas we only assume smoothness of arbitrarily small order $\varepsilon>0$ in the $x$-variable.
  
Theorem \ref{th:theoremC} involves a $Q$-localized, smooth maximal operator on the right-hand side, which we define next. 
For $s\geq  0$ and a cube $Q \subseteq \R^d$ we denote by $\mathcal{F}_s(Q)$ the collection of  $s$-smooth, $L^\infty$-normalized bump functions localized in $Q$, that is, writing $s=m+\delta$ with $m \in \N$ and $\delta \in (0,1]$ (or $\delta=0$ when $s=0$),  the class of functions $\varphi \colon \R^d \to \R$ with $\supp (\varphi) \subseteq {Q}$ and such that for all $y,y' \in \R^d$
\begin{align*}
\abs{\partial^\alpha\varphi(y)} &\leq \frac{1}{\ell(Q)^{\abs{\alpha}}}, &&\abs{\alpha} \leq m,\\
\abs{\partial^\alpha\varphi(y)-\partial^\alpha\varphi(y')} &\leq \frac{\abs{y-y'}^{\delta}}{\ell(Q)^{\abs{\alpha}+\delta}}, &&  \abs{\alpha} =m.
\end{align*}
    Note that for $\varphi \in \mc{F}_s(Q)$ and $\psi \in \mc{F}_s(R)$ with $R\subseteq Q$, we have that $c_{s}\cdot\varphi\cdot \psi \in \mc{F}_s(R)$ for some constant $c_s>0$. 

\begin{definition}
    Let $s\geq 0$ and for $f \in L^1_{\loc}(\R^d)$ define the smooth maximal operator $\MaxAvg^sf$  by
\begin{equation*}
    \MaxAvg^{s}f(x):= \sup_{Q} \,\sup_{\varphi \in \mathcal{F}_s(Q)} \,\frac{1}{\abs{Q}}\absB{\int_{{Q}} f(y)\varphi(y)\dd y}\ind_Q(x),\qquad x \in \R^d,
\end{equation*}
where the supremum is taken over all cubes $Q \subseteq \R^d$ with sides parallel to the coordinate axes. For a cube $Q_0\subseteq \R^d$ define the local version $\MaxAvg_{Q_0}^sf$ by
\begin{equation*}
    \MaxAvg_{Q_0}^sf(x):= \sup_{Q\subseteq Q_0} \,\sup_{\varphi \in \mathcal{F}_s(Q)} \,\frac{1}{\abs{Q}}\absB{\int_{{Q}} f(y)\varphi(y)\dd y} \ind_Q(x),\qquad x \in \R^d.
\end{equation*}
\end{definition}
Let us make a few remarks on the definition of this smooth maximal operator.
\begin{remark}~\label{rem:defMs}
    \begin{enumerate}[(i)]
        \item For $0\leq s_0<s_1$ we have
$
  \MaxAvg^{s_1}f \lesssim_{s_0,s_1} \MaxAvg^{s_0}f.
$
\item  When $s=0$, the extremizer in the definition of $\MaxAvg^0f$ is $\varphi = \mathrm{sgn}\br{f} \ind_Q$, recovering the classical (non-cancellative) Hardy--Littlewood maximal operator, i.e.,
$$
\MaxAvg^0f(x) = \sup_{Q} \,\ip{|f|}_Q\ind_Q,\qquad x \in \R^d.
$$

\item\label{it:defMsiii} Since we can approximate an $s_1$-smooth function by a $C^\infty_c$-function with convergence in $C^{s_0}(\R^d)$ for $0< s_0<s_1$, we have for $x \in \R^d$
$$
  \MaxAvg^{s_1}f(x) \lesssim_{s_0,s_1} \sup_{Q} \,\sup_{\varphi \in \mathcal{F}_{s_0}(Q)\cap C_c^\infty(\R^d)} \,\frac{1}{\abs{Q}}\absB{\int_{{Q}} f(y)\varphi(y)\dd y} \ind_Q(x) \leq \MaxAvg^{s_0}f(x).
$$
The middle expression can be extended from $f \in L^1_{\loc}(\R^d)$ to (tempered) distributions.
    \end{enumerate}
\end{remark}

To prove Theorem \ref{th:theoremC} using $f_Q$ as in \eqref{eq:choicefQ}, we need to estimate both
\begin{align*}
    \per_Q^{r}\brb{\abs{T(f\psi_{2Q})}} \qquad \text{ and } \qquad \per_Q^r(\MaxAvg^{\#}_Qf)
\end{align*}
in terms of $\per_Q^{r}(\MaxAvg^s_{3Q}f)$. For the former term, which is the local contribution of $T$, this is contained in the following proposition.

\begin{proposition}\label{prop:estonTF}
    Let $r \in (0,\frac12]$, $s>0$ and let $T$ be an $s$-smooth Calder\'on--Zygmund operator. For any $Q \subseteq \R^d$ and all $f \in L^\infty_c(\R^d)$ supported on $3Q$  we have
    \begin{equation*}
\per_Q^{2r}\br{\abs{Tf}} \lesssim_{T,s} \tfrac{1}{r} \cdot\per_Q^{r}(\MaxAvg^s_{3Q}f)
    \end{equation*}
\end{proposition}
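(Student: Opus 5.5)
We follow the scheme of Proposition \ref{dyadic:median_of_T}, now in the continuous setting: a Calder\'on--Zygmund decomposition of $f$ adapted to the level set of $\MaxAvg^s_{3Q}f$, with the bad part handled by smooth (moment-free) localization, and the good part by $L^2$-boundedness. Normalize so that $\lambda:=\per_Q^r(\MaxAvg^s_{3Q}f)=1$ (if $\lambda=0$, a limiting argument or the trivial case applies) and set $B=\{x\in Q:\MaxAvg^s_{3Q}f(x)>1\}$, so $|B|\le r|Q|$. The goal is to show $|\{x\in Q:|Tf(x)|>C/r\}|\le 2r|Q|$.

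\emph{Decomposition.} Enlarge $B$ to the open set $\widetilde B=\{\MaxAvg^0(\ind_B)>c_d\}$, whose measure is $\lesssim r|Q|$. Take a Whitney decomposition $\widetilde B=\bigcup_j Q_j$ with $3Q_j\subseteq \widetilde B$ and $\dist(Q_j,\widetilde B^c)\eqsim \ell(Q_j)$, together with a smooth partition of unity $\{\chi_j\}$ with $c\chi_j\in\mc F_s(\tfrac98 Q_j)$. Because some point of $\widetilde B^c$ (and hence, by construction, a positive fraction of points outside $B$) lies within $C\ell(Q_j)$ of $Q_j$, we obtain $|\int f\varphi|\lesssim |Q_j|$ for all $\varphi\in\mc F_s(CQ_j)$. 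Now subtract from $f\chi_j$ its projection onto polynomials of degree $\le m$ relative to a fixed smooth weight adapted to $Q_j$. The resulting coefficients are pairings of $f$ with $s$-smooth bumps on $Q_j$, hence they are $O(1)$, and each bad piece $b_j$ has vanishing moments up to order $m$. The good part $g=f-\sum_j b_j$ then satisfies $|g|\lesssim 1$: on $\widetilde B^c$ we have $|f|\le 1$ a.e. off $B$ (by differentiation, since $\MaxAvg^s f\ge |f|$ at Lebesgue points up to a constant), while on $\widetilde B$, $g$ is a sum of boundedly many polynomial pieces with $O(1)$ coefficients. We must also check that everything stays supported in $3Q$, so that the bumps used are admissible for $\MaxAvg^s_{3Q}$; this requires some care near $\partial(3Q)$.

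\emph{Estimates.} For the good part, Chebyshev and $L^2$-boundedness give $|\{|Tg|>C/(2r)\}|\lesssim r^2\|g\|_2^2/C^2\lesssim r^2|Q|$. For the bad part, discard $\bigcup_j C'Q_j$, whose measure is $\lesssim r|Q|$. Off this set, Taylor-expand $K(x,\cdot)$ to order $m$ around the center of $Q_j$; the vanishing moments kill the polynomial terms, and the $\delta$-H\"older remainder yields $|Tb_j(x)|\lesssim \ell(Q_j)^{d+s}|x-c_j|^{-d-s}\cdot\ell(Q_j)^{-d}\int|b_j|$. The obstacle is that $\int|b_j|$ is \emph{not} controlled, since only smooth pairings of $f$ are bounded. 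To get around this, instead write $Tb_j(x)=\int f(y)\,[\chi_j(y)R_x(y)]\,dy$ minus the polynomial correction, where $R_x$ is the Taylor remainder of $K(x,\cdot)$. Up to normalization, the function $y\mapsto \chi_j(y)R_x(y)$ is an $s$-smooth bump on $\tfrac98Q_j$ of size $\ell(Q_j)^{s}|x-c_j|^{-d-s}$; this is the reason for the $y$-smoothness of order $s$ in Definition \ref{def:CZO}. Its pairing with $f$ is then $\lesssim |Q_j|\,\ell(Q_j)^s|x-c_j|^{-d-s}$. Summing over $j$ gives a Marcinkiewicz-type function, whose integral over $Q\setminus\bigcup_j C'Q_j$ is $\lesssim\sum_j|Q_j|\lesssim r|Q|$. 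Chebyshev at level $C/(2r)$ thus contributes $\lesssim r^2|Q|$.

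Adding the contributions gives a total of $\le 2r|Q|$ once constants are tuned; note that $|\widetilde B|$ needs the coefficient to be at most about $r$, which may force using $\per^r$ of a slightly modified level or absorbing constants into $r$. Bookkeeping the constant in front of $r|Q|$ is the delicate point. The main obstacle is the bad-part estimate: verifying that the remainder function $\chi_jR_x$, after the polynomial correction, lies in a constant multiple of $\mc F_s(CQ_j)$ uniformly in $x$.
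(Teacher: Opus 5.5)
There is a genuine gap in your choice of exceptional set. You take $B=\{x\in Q:\MaxAvg^s_{3Q}f(x)>1\}$ inside $Q$ only. But $f$ lives on $3Q$, and the hypothesis on $\per^r_Q(\MaxAvg^s_{3Q}f)$ says nothing about $f$ on $3Q\setminus Q$. So the claim $|g|\lesssim 1$ fails: off $\widetilde B$ you have $g=f$, and your justification ($\MaxAvg^s_{3Q}f\le 1$ off $B$) is only available at points of $Q$.

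Concretely, take $f=N\phi\cos(Mx_1)$ with $\phi$ a fixed bump supported in $3Q\setminus 2Q$. Every cube through a point of $Q$ that meets $\supp\phi$ has side $\gtrsim\ell(Q)$, so $\MaxAvg^s_{3Q}f\lesssim N(M\ell(Q))^{-s}$ on $Q$, which is $\le 1$ for $M$ large. Meanwhile, for small $r$, $\widetilde B$ does not reach $\supp\phi$. Hence $\|g\|_{L^2}\ge\|f\|_{L^2(\supp\phi)}\eqsim N|Q|^{1/2}$, which is not controlled by $\per^r_Q(\MaxAvg^s_{3Q}f)\,|Q|^{1/2}$, and the good-part Chebyshev bound collapses.

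The natural repair is to decompose relative to the full level set $\Omega=\{\MaxAvg^s_{3Q}f>1\}\subseteq 3Q$. But this destroys the smallness $|\widetilde B|\lesssim r|Q|$ on which your step ``discard $\bigcup_jC'Q_j$'' relies, since $|\Omega|$ may be comparable to $|3Q|$. Separately, the bookkeeping issue you flag is real, not cosmetic. Removing $B$ plus a dilated set of measure $C_dr|Q|$ with $C_d>1$ only gives $\per^{Cr}_Q(|Tf|)$ on the left. Rescaling $r$ cannot convert this into the stated $\per^{2r}_Q$ versus $\per^r_Q$, because $\per^r_Q$ increases as $r$ decreases.

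The missing idea, which is how the paper proceeds, is never to discard dilates at all:
\begin{enumerate}[(i)]
\item Take the Whitney decomposition $\{R_j\}$ of the full set $\Omega$, relative to $3Q\setminus\Omega$. Then $|c_j|\lesssim 1$ via a nearby point $z\in 3Q\setminus\Omega$, and $|g|\lesssim 1$ on all of $3Q$.
\item Estimate only $E_\lambda=\{x\in Q:|Tf(x)|>\lambda,\ \MaxAvg^s_{3Q}f(x)\le 1\}$. Every $x\in E_\lambda$ lies outside $\Omega$, hence already satisfies $\dist(x,R_j)\ge\sqrt d\,\ell(R_j)$ for all $j$.
\item So your device of moving the kernel into the test function, which is correct, applies at every point of $E_\lambda$. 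It gives $\int_{E_\lambda}\sum_j|Tb_j|\lesssim\sum_j|R_j|\le|3Q|$ with no smallness of $\Omega$ needed.
\item Together with the good part this yields $|E_\lambda|\lesssim(\lambda^{-2}+\lambda^{-1})|Q|$. Choosing $\lambda\eqsim 1/r$ makes $|E_\lambda|\le r|Q|$, whence $|\{x\in Q:|Tf|>\lambda\}|\le|E_\lambda|+|B|\le 2r|Q|$. This is exactly where the factor $\tfrac1r$ in the statement comes from.
\end{enumerate}

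Your higher-order moment subtraction is also unnecessary. Mean zero of $b_j=(f-c_j)\eta_j$ and subtracting $K(x,x_j)$ suffice. The function $y\mapsto\ell(R_j)^{-\delta}|x-x_j|^{d+\delta}(K(x,y)-K(x,x_j))$, cut off near $R_j$, is already a bounded multiple of an element of $\mc F_s(2R_j)$. The resulting decay $\ell(R_j)^{\delta}|x-x_j|^{-d-\delta}$ is integrable away from $R_j$.
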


\begin{proof}
Fix $\psi \in C^\infty_c(\R^d)$ such that $0\leq \psi \leq 1$,  $\psi\equiv 1$ on $[-\frac12,\frac12]^d$ and $\psi\equiv 0$ outside $(1+\frac{1}{8\sqrt{d}})[-\frac12,\frac12]^d$. For each cube $Q\subseteq \R^d$ define 
$$
\psi_Q(x):= \psi\brB{\frac{x-x_Q}{\ell(Q)}}, \qquad x \in \R^d.
$$
Note that $\psi_Q\equiv 1$ on $Q$.

Fix a cube $Q \subseteq \R^d$ and $f \in L^\infty_c(\R^d)$ supported on $3Q$. Abbreviate $m_Q:=\per_Q^{r}(\MaxAvg_{3Q}^s f)$ and for $\lambda>0$ define
\begin{align*}
E_\lambda&:=\cbraceb{x \in Q: \abs{Tf(x)}>\lambda m_Q \text{ and } {\MaxAvg_{3Q}^s f(x)}\leq m_Q}.
\end{align*}
If there exists a $\lambda>0$, depending only on $T,r,s,d$,  such that
\begin{align}\label{eq:Testimatelambda}
    \abs{E_\lambda}\leq r\abs{Q},
\end{align}
we have 
    \begin{align*}
         \absb{\cbraceb{x \in Q:  \abs{Tf(x)}>\lambda m_Q}} &\leq \abs{E_\lambda}+ \absb{\cbraceb{x \in Q: \MaxAvg_{3Q}^s f(x) >m_Q}}    
         \leq 2 r\abs{Q}.
    \end{align*}
and therefore $\per_Q^{2r}\br{\abs{Tf}} \leq \lambda m_Q$, finishing the proof.

It remains to show \eqref{eq:Testimatelambda}.  Define the open set
\begin{align*}
    \Omega := \cbraceb{x \in \R^d: \MaxAvg_{3Q}^s f(x) >m_Q}
\end{align*}
and note that $\Omega\subsetneq 3Q$. Let $\cbrace{R_j}_j$ be a Whitney decomposition of $\Omega$ (cf. \cite[Section J.1]{Gra14}), that is,
\begin{align*}
\textstyle\bigcup_jR_j &=\Omega , \\
\sqrt{d}\cdot \ell(R_j)\leq \dist(R_j, 3Q\setminus \Omega) &\leq 4\sqrt{d}\cdot \ell(R_j),\\
\textstyle\sum_j \ind_{\frac{9}{8}R_j} &\leq 12^d.
\end{align*}
In particular, we have $\supp(\psi_{R_j}) \subseteq \Omega$. Define
\begin{align*}
\eta_j(x) := \frac{\psi_{R_j}(x)}{\sum_k \psi_{R_k}(x)} \ind_\Omega(x), \qquad x \in \R^d,
\end{align*}
so that we have
\begin{align*}
    \textstyle\sum_j\eta_j &= \ind_\Omega,\\
    \int_{\R^d}{\eta_j(x)}\dd x&\eqsim \abs{R_j} ,\\
    \abs{\partial^\beta\eta_j} &\lesssim_{\beta} \ell(R_j)^{-\abs{\beta}}, \qquad \beta\in \N^d.
\end{align*}
Now define the smooth Calder\'on--Zygmund decomposition of $f$ as
\begin{align*}
c_j&:=\frac{1}{\int_{\R^d}\eta_j(y)\dd y}\int_{\R^d} f(y) \eta_j(y)\dd y,\\
b_j(x)&:= f(x)\eta_j(x) - c_j \eta_j(x), &&x \in \R^d,\\
g(x)&:= f(x)-\sum_j b_j(x), &&x \in \R^d.
\end{align*}
and note that the support of $g$ is contained in $3Q$. 
Then, using any $z \in {3Q}\setminus\Omega$ such that $\dist(z,R_j)\lesssim \ell(R_j)$, we have 
\begin{align}\label{eq:badpartest}
    \abs{c_j} \lesssim \frac{1}{\abs{R_j}}\absB{\int_{\R^d} f(y) \eta_j(y)\dd y}\lesssim_d  \MaxAvg_{3Q}^s f(z) \leq m_Q.
\end{align}
Moreover, by a smooth variant of the Lebesgue differentiation theorem, for a.e. $x \in {3Q} \setminus \Omega$ we have 
$$
\abs{f(x)}\lesssim \MaxAvg_{3Q}^s f(x) \leq m_Q
$$
and therefore we have for all $x \in 3Q$ that
$$\abs{g(x)} \leq \abs{f(x)} \ind_{{3Q}\setminus \Omega}(x) + \sum_j \abs{c_j}\eta_j(x)\lesssim m_Q.$$
Combined with the $L^2$-boundedness of $T$, this yields
\begin{align}\label{eq:estforg}
\begin{aligned}
    \abs{E_\lambda}&\lesssim  \frac{1}{\lambda^2 m_Q^2} \int_Q \abs{Tg(x)}^2\dd x &&+ \sum_j \frac{1}{\lambda m_Q} \int_{E_\lambda} \abs{Tb_j(x)}\dd x\\
&\leq  \frac{\nrm{T}_{L^2(\R^d) \to L^2(\R^d)}^2}{\lambda^2 m_Q^2} \int_{{3Q}} \abs{g(x)}^2\dd x &&+ \sum_j \frac{1}{\lambda m_Q} \int_{E_\lambda} \abs{Tb_j(x)}\dd x\\
&\lesssim   \frac{\nrm{T}_{L^2(\R^d) \to L^2(\R^d)}^2}{\lambda^2} \cdot\abs{Q} &&+ \sum_j \frac{1}{\lambda m_Q} \int_{E_\lambda} \abs{Tb_j(x)}\dd x.
\end{aligned}
\end{align}
It remains to estimate the sum in $j$. Fix a $j$ and denote the center of $R_j$ by  $x_j$. Furthermore, fix an $x \in E_\lambda\subseteq \Omega^c$ and note that $\dist\br{x,R_j}\geq \sqrt{d}\ell(R_j)$. Write $s=m+\delta$ with $m \in \N$ and $\delta \in (0,1]$ and define
\begin{align*}
\xi_j(y)&:= \frac{\abs{x-x_j}^{d+\delta}}{\ell(R_j)^\delta} \brb{K(x,y) - K\br{{x,x_j}}}, && y \in \R^d,\\
    \zeta_j(y)&:= \xi_j(y)\cdot \psi_{(1+\frac{1}{8\sqrt{d}})R_j}\br{y}, && y \in \R^d.
\end{align*}
For $y \in \supp(\zeta_j)$ we have
\begin{align*}
    \abs{y-x_j} &\leq \brb{1+\tfrac{1}{8\sqrt{d}}}^2 \cdot \tfrac{\sqrt{d}}2  \cdot \ell(R_j) \leq \brb{1+\tfrac{1}{2\sqrt{d}}} \cdot \tfrac{\sqrt{d}}2 \cdot \ell(R_j)\\
    \abs{x-x_j}&\geq (\tfrac12 +\sqrt{d}) \cdot \ell(R_j) = \brb{1+\tfrac{1}{2\sqrt{d}}} \cdot {\sqrt{d}} \cdot \ell(R_j),
\end{align*}
i.e., $\abs{y-x_j} \leq \frac12 \abs{x-x_j}$.  Therefore,  the assumptions on $K$ yield for all $y \in \supp(\zeta_j)$
\begin{align*}
    \abs{\xi_j(y)} &\lesssim_K  \frac{\abs{x-x_j}^{d+\delta}}{\ell(R_j)^\delta} \frac{\abs{y-x_j}^\delta}{\abs{x-x_j}^{d+\delta}} \lesssim 1,\intertext{and similarly, for $1\leq \abs{\alpha}\leq m$}
    \abs{\partial^\alpha \xi_j(y)} &\lesssim_K  \frac{\abs{x-x_j}^{d+\delta}}{\ell(R_j)^\delta} \frac{1}{\abs{x-y}^{d+\abs{\alpha}}} \lesssim \frac{1}{\ell(R_j)^\delta} \frac{1}{\abs{x-x_j}^{\abs{\alpha}-\delta}}\lesssim \frac{1}{\ell(R_j)^{\abs{\alpha}}}.
    \intertext{
    Moreover for all $y,y' \in \supp(\zeta_j)$ and $\abs{\alpha}= m$ we have}
    \abs{\partial^\alpha\xi_j(y)-\partial^\alpha\xi_j(y')} &\lesssim_K \frac{\abs{x-x_j}^{d+\delta}}{\ell(R_j)^\delta} \frac{\abs{y-y'}^\delta}{\abs{x-y}^{d+\abs{\alpha}+\delta}} \lesssim \frac{1}{\ell(R_j)^\delta} \frac{\abs{y-y'}^\delta}{\abs{x-x_j}^{\abs{\alpha}}}\lesssim \frac{\abs{y-y'}^{\delta}}{\ell(R_j)^{\abs{\alpha}+\delta}}.
\end{align*}
By the (fractional) product rule, this proves that $\zeta_j$ is, up to a multiplicative constant depending on $K$ and $d$, an element of $\mathcal{F}_s(2R_j)$. Since $\int_{\R^d}b_j(x)\dd x = 0$, we can estimate 
\begin{align*}
    \abs{Tb_j(x)} &= \absB{\int_{\R^d} K(x,y)b_j(y)\dd y}\\
    &=\absB{\int_{\R^d} \brb{ K(x,y) - K(x,x_j)}b_j(y)\dd y}\\
    &= \frac{\ell(R_j)^\delta}{\abs{x-x_j}^{d+\delta}} \cdot \absB{\int_{\R^d}b_j(y)\zeta_j(y)\dd y}\\
    &\lesssim \frac{\ell(R_j)^\delta}{\abs{x-x_j}^{d+\delta}} \cdot \absB{\int_{\R^d}f(y)\eta_j(y)\zeta_j(y)\dd y}+ \frac{\ell(R_j)^\delta}{\abs{x-x_j}^{d+\delta}} \abs{R_j} \abs{c_j}\\
    &\lesssim_{K}  \abs{R_j} \cdot\frac{\ell(R_j)^\delta}{\abs{x-x_j}^{d+\delta}}\cdot m_Q
\end{align*}
where, in the final estimate,  we used \eqref{eq:badpartest} and a similar estimate for the first term.
We conclude that
\begin{align*}
\sum_j \frac{1}{\lambda m_Q} \int_{E_\lambda} \abs{Tb_j(x)}\dd x \lesssim_{K} \sum_j \frac{\abs{R_j}}{\lambda} \int_{R_j^c} \frac{\ell(R_j)^\delta}{\abs{x-x_j}^{d+\delta}} \dd x \lesssim_{s} \sum_j \frac{\abs{R_j}}{\lambda}\lesssim \frac{\abs{Q}}{\lambda}.
\end{align*}
Combined with \eqref{eq:estforg}, this yields  $
\abs{E_\lambda}\lesssim_{T,s} (\lambda^{-2}+\lambda^{-1})\abs{Q}.
$
Taking $\lambda>0$ large enough we obtain \eqref{eq:Testimatelambda}, which finishes the proof.
\end{proof}

We are now ready to prove Theorem \ref{th:theoremC}.
\begin{proof}[Proof of Theorem \ref{th:theoremC}]
Let $f \in L^\infty_c(\R^d)$ be supported on a cube $Q_0 \subseteq \R^d$, and let $r:= 2^{-d-4}$. For each cube $Q\subseteq \R^d$ let $\psi_Q$ be as in Proposition \ref{prop:estonTF} and set
$$
f_Q := T(f\psi_{2Q}), \qquad f_{Q,R}=f_Q-f_R= T(f(\psi_{2Q}-\psi_{2R})).
$$
Noting that $f_{Q_0} = Tf$ on $Q_0$ and combining Theorem \ref{th:theorem32Emiel} and Proposition \ref{prop:estonTF}, we find a sparse family ${\Ss}\subseteq \D(Q_0)$ such that
$$
|Tf(x)|\lesssim_{T,s}  \sum_{Q\in {\Ss}}\left[\per_Q^{r}(\MaxAvg^s_{3Q}f)+\per_Q^{2r}(\MaxAvg^{\#}_{T,Q}f)\right]\ind_Q(x), \qquad x \in Q_0,
$$
where $\MaxAvg^{\#}_{T,Q}$ is as in \eqref{eq:grandmaxtrunc} with our specific choice for $f_Q$.
It remains to show that for all cubes $Q \in \D(Q_0)$ 
$$
\per_Q^{2r}(\MaxAvg^{\#}_{T,Q}f) \lesssim_{T,s} \per_Q^{r}(\MaxAvg^s_{3Q}f),
$$
for which it suffices to show that 
\begin{equation}\label{eq:themCtoshow}
    \MaxAvg^{\#}_{T,Q}f(x) \lesssim_{K,s} \MaxAvg_{3Q}^sf(x), \qquad x \in Q.
\end{equation}

 Fix a cube $Q \in \D(Q_0)$ and $x \in Q$. Take $R \in \D(Q)$ such that $x \in R$ and let $x',x'' \in R$. Let $n\in \N$ be the largest integer such that $2^nR\subseteq 2Q$ and note that $2^n\ell(R) \eqsim \ell(Q)$. Write $s = m+\delta$ for $m \in \N$ and $\delta \in (0,1]$ and for $k=1,\ldots,{n}$ define
\begin{align*}
\xi_k(y)&:= 2^{k(d+\varepsilon)}\abs{R} \cdot\brb{ K(x',y) - K(x'',y)}, \qquad &&y \in \R^d,\\
   \zeta_k(y)&:= \xi_k(y)\cdot \brb{\psi_{2^{k+1}R}(y) - \psi_{2^{k} R}(y)}, \qquad &&y \in \R^d,\, 1\leq k\leq n-1\\
   \zeta_n(y)&:= \xi_n(y)\cdot \brb{\psi_{2Q}(y) - \psi_{2^{n} R}(y)}, \qquad &&y \in \R^d.
\end{align*}
For $y \in \supp(\zeta_k)$ we note that $$\abs{x-y}\eqsim\abs{x'-y}\eqsim \abs{x''-y} \eqsim 2^k \ell(R),$$ so the assumptions on $K$ yield
 for all $y,y' \in \supp(\zeta_k)$ and $0\leq \abs{\alpha} \leq m$
\begin{align*}
    \abs{\partial^\alpha \xi_k(y)} &\lesssim_K  2^{k(d+\varepsilon)}\abs{R} \cdot \frac{\abs{x'-x''}^\varepsilon}{\abs{x'-y}^{d+\abs{\alpha}+\varepsilon}}
    \lesssim \frac{1}{(2^k\ell(R))^{\abs{\alpha}}},\intertext{and for $\abs{\alpha}=m$}
    \abs{\partial^\alpha\xi_k(y)-\partial^\alpha\xi_k(y')} &\lesssim_K 2^{k(d+\varepsilon)}\abs{R} \cdot \frac{\abs{y-y'}^\delta\abs{x'-x''}^\varepsilon}{\abs{x'-y}^{d+\abs{\alpha}+\delta+\varepsilon}} \lesssim  \frac{\abs{y-y'}^\delta}{(2^k\ell(R))^{\abs{\alpha}+\delta}}.
\end{align*}

By the (fractional) product rule, this proves that $\zeta_k$ is, up to a multiplicative constant depending on $K$ and $d$, an element of $$\mathcal{F}_s\brb{(1+\tfrac{1}{8\sqrt{d}}) 2^{k+1}R}$$ for $k=1,\ldots,n-1$. Furthermore, $\zeta_n$ is, up to a multiplicative constant depending on $K$ and $d$, an element of $\mc{F}_s(3Q)$. Therefore
\begin{align*}
  \abs{  f_{Q,R}(x')-f_{Q,R}(x'')}&= \absB{\int_{\R^d} \brb{ K(x',y) - K(x'',y)}(\psi_{2Q}(y) - \psi_{2R}(y))f(y)\dd y}\\
    &\leq \sum_{k=1}^n 2^{-k\varepsilon}\frac{1}{\abs{2^kR}}\absB{\int_{\R^d} \zeta_k(y)f(y)\dd y}\\
    &\lesssim_{K,\varepsilon} \MaxAvg_{3Q}^sf(x).
\end{align*}
Taking the supremum over all $x',x'' \in R$ and $R \in \D(Q)$, we obtain \eqref{eq:themCtoshow}. This finishes the proof.
\end{proof}

\section{Weighted norm estimates} \label{sec:sec4}

In this section we show how our cancellative sparse bounds lead to quantitative, and in some cases sharp, weighted Hardy space estimates. We begin with the $s$-smooth Calderón--Zygmund operators studied in Section \ref{sec:sec3}, as this case contains the main ideas and technical difficulties. In Remark \ref{rem:nonCZweight} we indicate how the arguments can be adapted to the martingale setting of Sections \ref{sec:sec1}-\ref{sec:sec2}.

\subsection{Muckenhoupt weighted Hardy spaces}
By a weight $w$ we mean a locally integrable $w \colon \R^d \to (0,\infty)$. For a measurable set $E\subseteq \R^d$ we write $w(E):= \int_E w(x)\dd x$. For $p \in (0,\infty)$ we define $L^p(w)$ as the space of all measurable $f \colon \R^d \to \C$ such that
$$
\nrm{f}_{L^p(w)}:= \brB{\int_{\R^d} \abs{f(x)}^p w(x)\dd x}^{1/p}<\infty.
$$
For $q \in (1,\infty)$ we say that $w$ belongs to the Muckenhoupt $A_q$-class and write $w \in A_q$ if
$$
[w]_{A_q}:=\sup_{Q} \,\ip{w}_{Q} \ip{w^{-{\frac{1}{q-1}}}}_{Q}^{q-1}<\infty,
$$
where the supremum is taken over all cubes $Q\subseteq \R^d$.
Furthermore, we say that $w$ belongs to the Muckenhoupt $A_\infty$-class and write $w \in A_{\infty}$ if
$$
[w]_{A_{\infty}}:=\sup_{Q}\frac{1}{w(Q)}\int_Q \sup_{R\subseteq Q} \ip{w}_R \ind_R(x) \dd x.$$
By \cite[Proposition 2.2]{HP13} we know 
  $[w]_{A_{\infty}}\lesssim [w]_{A_q},$
and thus $A_q\subseteq A_\infty$. In fact, we have $\bigcup_{q> 1} A_q = A_\infty$, i.e. for all $w \in A_\infty$ there exists a $q \in (1,\infty)$ such that $w \in A_q$. 
However,  both $q$ and $[w]_{A_q}$ can  grow exponentially with respect to $[w]_{A_\infty}$, see \cite{HP16} and the references therein. Therefore, to obtain quantitative weighted norm inequalities, we will assume $w \in A_q$ for some $q \in (1,\infty)$. The key property of $w \in A_q$ we will use is that, for a cube $Q \subseteq \R^d$ and $E\subseteq Q$, we have 
\begin{equation}\label{eq:RDq}
\brB{\frac{\abs{E}}{\abs{Q}}}^q \leq [w]_{A_q}  \cdot \frac{w(E)}{w(Q)}.
\end{equation}

Next, we introduce the weighted Hardy spaces $H^p(w)$ following \cite{ST89}, see in particular Chapters VI and VIII therein for further details. Let $\mc{S}(\R^d)$ and $\mc{S}'(\R^d)$ denote the spaces of Schwartz functions and  tempered distributions, respectively.  Fix a $\psi \in \mc{S}(\R^d)$ with $\int_{\R^d}\psi(x)\dd x =1$. For $t>0$ set $\psi_t(x):= t^{-d}\psi(x/t)$ and for $f \in  \mc{S}'(\R^d)$ define
\[
\MaxAvg^\psi f(x) := \sup_{\substack{ y \in \R^d,\, t>0:\\ \abs{x-y}\leq t} }\abs{\psi_t*f(y)}, \qquad x \in \R^d.
\]
For $p \in (0,\infty)$ and $w \in A_\infty$, we define
\[
H^p(w):= \cbraceb{f \in \mc{S}'(\R^d) : \MaxAvg^\psi f \in L^p(w)}
\]
with quasi-norm
\[
\nrm{f}_{H^p(w)}:= \nrm{ \MaxAvg^\psi f}_{L^p(w)}.
\]
Then $H^p(w)$ is a quasi-Banach space and different choices of $\psi$ yield equivalent quasi-norms. We refer to \cite[Chapters VI and VIII]{ST89} for numerous other equivalent quasi-norms on $H^p(w)$. Since the constants in these equivalences depend on the weight characteristic of 
$w$, one must fix a specific quasi-norm in order to obtain \emph{quantitative} weighted Hardy space estimates. We shall therefore work with the quasi-norm defined via the smooth maximal operator $\MaxAvg^s$ introduced in Section \ref{sec:sec3}. As we show in the next proposition, this quasi-norm is equivalent to the one defined above.

\begin{proposition}\label{prop:Mschar}
Let $p \in (0,\infty)$, $s>0$ and assume $q :=p(1+\frac{s}d)>1$. For $w \in A_q$ and  $f \in 
H^p(w) \cap L^1_{\loc}(\R^d)$ we have 
    $$
\nrm{f}_{\Hardy^p(w)} \lesssim_{s,\psi} \nrm{\MaxAvg^s f}_{L^p(w)} \lesssim_{p,s,w,\psi} \nrm{f}_{\Hardy^p(w)}
    $$
\end{proposition}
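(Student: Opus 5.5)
The first inequality will follow from a pointwise bound, and the second from weighted atomic/grand maximal theory plus an $A_q$ argument.

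\emph{First inequality.} I will show the pointwise bound $\MaxAvg^\psi f \lesssim_{s,\psi} \MaxAvg^s f$. Fix $x$, and $y,t$ with $\abs{x-y}\le t$. Decompose $\psi_t(y-\cdot)$ dyadically in space. Take a smooth partition of unity $1=\sum_{k\ge0}\theta_k$ with $\theta_0$ supported in $B(y,2t)$ and $\theta_k$ supported in the annulus $\{2^{k-1}t\le \abs{\cdot-y}\le 2^{k+1}t\}$, and put $Q_k$ the cube centred at $y$ of side $\eqsim 2^{k+2}t$; every $Q_k$ contains $x$. Then $\psi_t(y-\cdot)\theta_k = c_k\abs{Q_k}^{-1}\varphi_k$ with $\varphi_k\in\mc{F}_s(Q_k)$. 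Because $\psi$ is Schwartz, $c_k\lesssim_N 2^{-kN}$ for any $N$: the $C^s$ seminorms of $\psi_t(y-\cdot)\theta_k$ at scale $2^kt$ are controlled by $2^{kd}t^{-d}\cdot 2^{-kN}$ times $\abs{Q_k}^{-1}$-normalisation. Summing gives $\abs{\psi_t*f(y)}\le\sum_k c_k \MaxAvg^sf(x)\lesssim \MaxAvg^sf(x)$. Since $f\in L^1_{\loc}$, the pairing is an honest integral, and convergence of the sum is justified for $f\in H^p(w)$ (tempered). Raising to the power $p$ and integrating against $w$ gives the first inequality, with a constant independent of $w$.

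\emph{Second inequality.} The plan is to pass through the grand maximal function and then use the atomic/Stein--Weiss theory in a local form. Concretely:
\begin{enumerate}[(a)]
\item First I bound $\MaxAvg^s f$ by a Peetre-type or nontangential grand maximal function with smoothness $s$. For $\varphi\in\mc{F}_s(Q)$, I write $\varphi = \varphi*\psi_\epsilon + (\varphi-\varphi*\psi_\epsilon)$, or more systematically expand $\varphi$ via a Calder\'on reproducing formula $\varphi=\sum_j \varphi * \phi_j*\psi_{2^{-j}}$. Pairing with $f$ then yields
\[
\frac{1}{\abs{Q}}\absB{\int f\varphi}\lesssim \sum_{j} 2^{-js'}\cdot(\text{averages of }\abs{\psi_{2^{-j}\ell(Q)}*f}\text{ over }Q).
\]
Since $\varphi$ has only $s$ derivatives, I can only gain decay $2^{-js}$ at fine scales $2^{-j}\ell(Q)$. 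The fine-scale averages of $\abs{\psi_t*f}$ over $Q$ are bounded by $M(\MaxAvg^\psi f)$ restricted appropriately, but with the scaling loss producing exactly $\MaxAvg^s f\lesssim \big(M((\MaxAvg^\psi f)^{u})\big)^{1/u}$ with $u=d/(d+s)$.
\item I therefore aim for the pointwise estimate $\MaxAvg^sf\lesssim \big(M(\MaxAvg^\psi f)^{u}\big)^{1/u}$, $u = \frac{d}{d+s}$, which is the standard Fefferman--Stein-type bound for smoothness-$s$ grand maximal functions (the fine scales contribute $2^{-js}\cdot 2^{jd/u-jd}$-type factors, borderline summable; I may need $u$ slightly above $d/(d+s)$, which is allowed by openness of $A_q$).
\item Then $\nrm{\MaxAvg^sf}_{L^p(w)}\lesssim \nrm{M(\MaxAvg^\psi f)^u}_{L^{p/u}(w)}^{1/u}$. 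Because $p/u = q>1$ and $w\in A_q$, Muckenhoupt's theorem bounds this by $\nrm{\MaxAvg^\psi f}_{L^p(w)}$ with a constant depending on $[w]_{A_q}$.
\end{enumerate}

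\emph{Main obstacle.} The main obstacle is step (b): making the limited-smoothness test functions decay fast enough that the exponent $u$ matches $q=p(1+s/d)$ exactly. If the endpoint fails, I would first try an $\epsilon$-loss, using the self-improvement $w\in A_{q-\epsilon}$, which is acceptable since the constant is allowed to depend on $w$. Failing that, I would instead route the argument through the atomic decomposition of $H^p(w)$ from \cite{ST89} and estimate $\MaxAvg^s$ directly on atoms.
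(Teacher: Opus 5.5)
Your first inequality uses the same annulus decomposition as the paper. For the second inequality you take a genuinely different route.

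\textbf{The paper's route.} It reduces by density to $C_c^\infty$ functions with vanishing moments and uses the finite atomic decomposition of $\Hardy^p(w)$. Each atom is bounded by $\MaxAvg^s a_k\lesssim M(\ind_{Q_k})^{(d+s)/d}$ via Taylor expansion. The proof closes with the weighted Fefferman--Stein inequality for $\ell^{(d+s)/d}$-valued functions in $L^{q}(w)$. This works at exactly $q=p(1+\frac sd)$ and gives the explicit power $[w]_{A_q}^{\max\{1,q'/p\}}$. That power is only relative to the atomic quasi-norm, whose comparison with $\nrm{\cdot}_{\Hardy^p(w)}$ is not quantified.

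\textbf{Your route.} You aim for the scalar pointwise bound $\MaxAvg^s f\lesssim_u M((\MaxAvg^\psi f)^u)^{1/u}$, followed by Muckenhoupt's theorem on $L^{p/u}(w)$. This avoids atoms and density and compares $\MaxAvg^s f$ directly with $\MaxAvg^\psi f$. The price is that the constant blows up as $u\downarrow\frac{d}{d+s}$; at the endpoint your $j$-sum is $\sum_j 1$. So the openness $w\in A_{q-\epsilon}$, with $u=p/(q-\epsilon)\in(\frac{d}{d+s},1)$, is genuinely needed. The $w$-dependent constant in the statement permits this, and a quantitative openness lemma would even make the dependence explicit in $[w]_{A_q}$.

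\textbf{What step (b) actually requires.} Step (b) is not the textbook Fefferman--Stein/Peetre estimate. Passing through $\sup_{y,t}\abs{\psi_t*f(y)}(1+\abs{x-y}/t)^{-b}$ forces $b<s$, hence only $u>d/s$. That loses $d$ derivatives and does not reach $A_q$. What makes (b) true is averaging:
\begin{itemize}
\item Fix $\tau=2^{-j}\ell(Q)$ and let $P$ run over the $\eqsim 2^{jd}$ subcubes of $Q$ of diameter at most $\tau$.
\item The aperture in $\MaxAvg^\psi$ gives $\abs{\psi_\tau*f}\le\inf_P\MaxAvg^\psi f\le\ip{(\MaxAvg^\psi f)^u}_P^{1/u}$ on each $P$.
\item The embedding $\ell^1\subseteq\ell^{1/u}$ then yields $\ip{\abs{\psi_\tau*f}}_Q\lesssim 2^{jd(\frac1u-1)}\ip{(\MaxAvg^\psi f)^u}_Q^{1/u}$.
\item Combine this with $\nrm{\varphi*\phi_j}_{L^\infty}\lesssim 2^{-js}$ for $\varphi\in\mc{F}_s(Q)$. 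Run the same count on dilates $2^iQ$ to absorb the rapidly decaying tails of $\varphi*\phi_j$.
\end{itemize}
The resulting sum converges precisely when $u>\frac{d}{d+s}$. With this step written out, your argument is complete.
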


Before turning to the proof, we record a few remarks concerning Proposition~\ref{prop:Mschar}.
\begin{remark}~\label{rem:Mschar}
\begin{enumerate}[(i)]
    \item We restrict to the dense subspace $H^p(w) \cap L^1_{\loc}(\R^d)$ in Proposition~\ref{prop:Mschar}, since $\MaxAvg^s f$ is not necessarily well-defined for arbitrary $f \in \mc{S}'(\R^d)$. Alternatively, one can modify the definition of $\MaxAvg^s$ by restricting the supremum to test functions in $\mc{F}_s(Q)\cap C^\infty_c(\R^d)$, see also Remark~\ref{rem:defMs}\ref{it:defMsiii}.
    \item We do not make the dependence of the implicit constant on $w$ explicit in the second estimate of Proposition~\ref{prop:Mschar}, although the proof shows that it depends only on $[w]_{A_q}$. More precisely, inspection of the argument yields
    \[
\nrm{\MaxAvg^s f}_{L^p(w)} \lesssim_{p,s,\psi} [w]_{A_q}^{\max\cbrace{1,\frac{q'}{p}}} \cdot\nrm{f}_{H^p_{\mathrm{at}}(w)},
    \]
    where $\nrm{\cdot}_{H^p_{\mathrm{at}}(w)}$ denotes the equivalent quasi-norm on $H^p(w)$ induced by the atomic decomposition. As far as the authors are aware, the quantitative dependence on $[w]_{A_q}$ in the estimate
    \[
\nrm{\cdot}_{H^p_{\mathrm{at}}(w)} \lesssim_{p,w,\psi} \nrm{\cdot}_{{H^p}(w)}
    \]
    has not been tracked in the literature.
\end{enumerate}
\end{remark}

\begin{proof}[Proof of Proposition \ref{prop:Mschar}]
Take $f \in 
H^p(w) \cap L^1_{\loc}(\R^d)$. We first note that 
$$
\MaxAvg^{\psi}f(x) \lesssim_{s,\psi} \MaxAvg^sf(x), \qquad x \in \R^d,
$$
which is trivial if $\psi$ has compact support and otherwise follows by decomposing into annuli and using that $\psi \in \mc{S}(\R^d)$. This proves the first claimed norm estimate. 

For the second estimate, 
define $N:=\ceil{s}$ and note that by density (see \cite[Chapter~VII]{ST89}) we may assume without loss of generality that $f \in C_c^\infty(\R^d)$ and 
$$
\int_{\R^d} x^\alpha f(x) \dd x =0, \qquad \abs{\alpha} \leq N.
$$
Now, by the finite atomic decomposition of $\Hardy^p(w)$ (see, e.g., \cite[Theorem 2.6]{CMN19}), we can write $f = \sum_{k=1}^n \lambda_k a_k$ for a sequence of non-negative numbers $\cbrace{\lambda_k}_{k=1}^n$ and a sequence of $(N,\infty)$-atoms $\cbrace{a_k}_{k=1}^n$ supported on cubes $\cbrace{Q_k}_{k=1}^n$ with
\begin{equation}\label{eq:atomicest}
    \nrmB{\sum_{k=1}^n \lambda_k\ind_{Q_k}}_{L^p(w)} \lesssim_{p,w,\psi} \nrm{f}_{H^p(w)}.
\end{equation}
Here an $(N,\infty)$-atom (normalized as in \cite{CMN19}) is a function $a$ supported on a cube $Q$ such that $\nrm{a}_{L^\infty(Q)} \leq 1$ and
$$
\int_{\R^d} x^\alpha a(x) \dd x =0, \qquad  \abs{\alpha} \leq N.
$$

Fix $1\leq k \leq n$ and denote the center of $Q_k$ by $x_k$. We trivially have 
\begin{align*}
     \MaxAvg^s a_k(x)\cdot  \ind_{3Q_k}(x) \leq \nrm{a_k}_{L^\infty(Q_k)} \cdot  \ind_{3Q_k}(x) \lesssim_{s} \nrm{a_k}_{L^\infty(Q_k)} \cdot  M (\ind_{Q_k}) (x)^{\frac{d+s}{d}},
\end{align*}
where $M$ denotes the Hardy--Littlewood maximal operator.
For $x \notin 3Q_k$, let $R\subseteq \R^d$ be a cube such that $x \in R$ and $R\cap Q_k\neq \emptyset$.  Then $\ell(Q_k) \leq \ell(R)$ and
$$
\abs{x-x_k} \leq \sqrt{d}\brb{\ell(Q_k)+\ell(R)} \lesssim \ell(R).
$$
Write $s = m+\delta$ with $m \in \N$ and $\delta \in (0,1]$. Take $z \in R\cap Q_k$ and for $\varphi \in \mc{F}_s(R)$ let $P_{z}^m (\varphi)$ denote the Taylor polynomial of $\varphi$ at $z$ of degree $m$. Since $\varphi$ is $s$-smooth and supported on $R$, we have
$$
\abs{\varphi(y) - P_{z}^m (\varphi)(y)} \lesssim_{s} \frac{\abs{y-z}^s}{\ell(R)^s}, \qquad y \in \R^d,
$$
and therefore, using the cancellative properties of $a_k$, we have
\begin{align*}
\sup_{\varphi \in \mathcal{F}_s(R)} \,\frac{1}{\abs{R}} \absB{\int a_k(y)\varphi(y)\dd y} &= \sup_{\varphi \in \mathcal{F}_s(R)} \,\frac{1}{\abs{R}} \absB{\int_{Q_k} a_k(y)\brb{\varphi(y)-P_{z}^m (\varphi)(y)}\dd y}\\
&\lesssim_{s} \frac{\nrm{a_k}_{L^1(Q_k)}}{\abs{R}} \frac{{\ell(Q_k)}^s}{\ell(R)^s} \lesssim  \nrm{a_k}_{L^\infty(Q_k)} \cdot \frac{\ell(Q_k)^{d+s}}{\abs{x-x_k}^{d+s}}\\&\eqsim \nrm{a_k}_{L^\infty(Q_k)} \cdot M(\ind_{Q_k})(x)^{\frac{d+s}{d}}.
\end{align*} 
Hence, denoting $t:=\frac{d}{d+s}$, we have
$$
\MaxAvg^s f(x) \leq  \sum_{k=1}^n \lambda_k \cdot \MaxAvg^sa_k(x) \lesssim_{s}   \sum_{k=1}^n \lambda_k \cdot M(\ind_{Q_k})^{1/t} =  \sum_{k=1}^n  M(\lambda_k^{t}\ind_{Q_k})^{1/t}.
$$
By assumption, we have $p/t=q> 1$ and  $1/t> 1$, so by the weighted Fefferman-Stein inequality \cite[Theorem 1.12]{CMP12} we obtain
\begin{align*}
    \nrm{\MaxAvg^s f}_{L^p(w)} &\lesssim_{s}  \nrmB{\sum_{k=1}^n  M(\lambda_k^{t}\ind_{Q_k})^{1/t}}_{L^p(w)}\\
    &=\nrmB{\brB{\sum_{k=1}^n  M(\lambda_k^{t}\ind_{Q_k})^{1/t}}^{t}}_{L^{p/t}(w)}^{1/t}\\
    &\lesssim_{p,s} [w]_{A_{p/t}}^{\frac1t \cdot \max\cbrace{t,\frac{t}{p-t}}} \nrmB{\brB{\sum_{k=1}^n  \lambda_k\ind_{Q_k}}^{t}}_{L^{p/t}(w)}^{1/t}\\
    &= [w]_{A_{q}}^{\max\cbrace{1,\frac{q'}{p}}} \nrmB{\sum_{k=1}^n  \lambda_k\ind_{Q_k}}_{L^{p}(w)}.
\end{align*}
Combined with \eqref{eq:atomicest}, this finishes the proof.
\end{proof}

\subsection{Weighted norm estimates for Calder\'on--Zygmund operators}
We now turn to weighted norm estimates for operators from $H^p(w)$ to $L^p(w)$ for $0<p<\infty$ and $w \in A_\infty$. Combining Theorem \ref{th:theoremC} and Proposition \ref{prop:Mschar}, this has been reduced to the $L^p(w)$-boundedness of the cancellative sparse operator given by 
$$
f \mapsto \sum_{Q\in \mc{S}} \per^r_Q(|f|)\ind_Q.
$$
The following proposition contains the key estimate.
\begin{proposition}\label{prop:weightedsparse}
    Let $\mc{S}$ be an $\eta$-sparse collection of cubes. Let $p,t \in (0,\infty)$, $q \in (1,\infty)$, $r \in (0,1)$ and $w \in A_q$. Then for all $f \in L^p(w)$
    \begin{equation*}
        \nrmB{\brB{ \sum_{Q\in \mc{S}} \per^r_Q(|f|)^t\ind_Q}^{1/t} }_{L^p(w)} \lesssim_{p,q,r,t,\eta} [w]_{A_q}^{\frac1p} [w]_{A_\infty}^{(\frac1{t}-\frac1{p})_+}\cdot \nrm{f}_{L^p(w)}.
    \end{equation*}
\end{proposition}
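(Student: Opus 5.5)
The plan is to reduce everything to a classical, non-cancellative weighted sparse estimate at a very high integrability exponent, where the $A_q$ characteristic enters only through the first power of $[w]_{A_q}$ in the $p$-th power of the norm.

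\textbf{Step 1: pointwise reduction.} The first step is the bound already obtained in the proof of Lemma~\ref{lem:LpboundCancellativeSparseOp}. For every $\sigma\in(0,\infty)$,
\[
\per^r_Q(|f|)\le \bigl(r^{-1}\ip{|f|^\sigma}_Q\bigr)^{1/\sigma}.
\]
I fix $\sigma>0$ so small that $P:=p/\sigma\ge q$ and $P\ge t/\sigma$. Writing $g:=|f|^\sigma$, so that $\nrm{g}_{L^P(w)}^P=\nrm{f}_{L^p(w)}^p$, the left-hand side of the proposition is bounded by
\[
r^{-1/\sigma}\,\nrmB{\brB{\sum_{Q\in\mc S}\ip{g}_Q^{\,t/\sigma}\ind_Q}^{\sigma/t}}_{L^{P}(w)}^{1/\sigma}.
\]
The cancellation has now disappeared: we face an $\ell^{u}$-valued sparse operator with $u:=t/\sigma$, acting on $L^P(w)$ with $P$ as large as we like. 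Since $A_q\subseteq A_P$ with $[w]_{A_P}\le[w]_{A_q}$ for $P\ge q$, it suffices to prove
\[
\nrmB{\brB{\sum_{Q\in\mc S}\ip{g}_Q^{u}\ind_Q}^{1/u}}_{L^P(w)}^P\lesssim [w]_{A_P}\,[w]_{A_\infty}^{(P/u-1)_+}\nrm{g}_{L^P(w)}^P .
\]
Taking $1/P$-th powers and then the outer power $1/\sigma$ gives exactly $[w]_{A_q}^{1/p}[w]_{A_\infty}^{(1/t-1/p)_+}$, because $P\sigma=p$ and $u\sigma=t$.

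\textbf{Step 2: the case $u\ge P$ (that is, $t\ge p$).} Here $\ell^u\subseteq\ell^P$ pointwise, so it is enough to bound $\sum_Q\ip{g}_Q^P w(Q)$. I will use the sparse sets $E_Q$ and \eqref{eq:RDq} in the form $w(Q)\le\eta^{-P}[w]_{A_P}w(E_Q)$. Then
\[
\sum_Q\ip{g}_Q^Pw(Q)\le\eta^{-P}[w]_{A_P}\int_{\R^d}(Mg)^Pw.
\]
It then remains to control $\int (Mg)^P w$, where the difficulty is to avoid a second factor of $[w]$. I would try first the standard trick of writing $\ip{g}_Q=\ip{g\,\sigma_P^{-1}\sigma_P}_Q$ with $\sigma_P:=w^{-1/(P-1)}$, and bounding via the weighted maximal function $M^{\sigma_P}_{\D}$, whose $L^{P}(\sigma_P)$ norm is universal (Doob). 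The characteristic enters only as $\ip{w}_Q\ip{\sigma_P}_Q^{P-1}\le[w]_{A_P}$, once per cube.

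\textbf{Step 3: the case $u<P$ (that is, $t<p$).} I expect this case to be the main obstacle. Here one cannot pass to $\ell^P$, and the extra factor $[w]_{A_\infty}^{P/u-1}$ must appear, as in Hytönen--Pérez type square function bounds. The plan is to write
\[
\nrm{F}_{L^P(w)}^P=\nrmb{F^u}_{L^{P/u}(w)}^{P/u}
\]
and dualize $L^{P/u}(w)$ against $h\in L^{(P/u)'}(w)$. This gives
\[
\sum_Q\ip{g}_Q^u\,\ip{h w}_Q|Q| .
\]
The $g$-part is handled as in Step 2, and the sum over $Q$ is controlled through the weighted Carleson embedding. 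For this I use that a sparse family is $w$-Carleson with constant $\lesssim[w]_{A_\infty}$, i.e.
\[
\sum_{Q\subseteq R,\,Q\in\mc S}w(Q)\lesssim[w]_{A_\infty}w(R).
\]
Iterating this through the $P/u-1$ excess powers (or, alternatively, via a good-$\lambda$/layer-cake argument over maximal stopping cubes) should produce the factor $[w]_{A_\infty}^{P/u-1}$. Keeping track that only one $[w]_{A_P}$ factor appears, and that it is not squared when the Carleson step is combined with Step 2, is the delicate bookkeeping.
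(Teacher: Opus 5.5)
\paragraph{The reduction in Step 1 targets a false inequality.} Your pointwise bound in Step 1 is correct. The inequality you then declare sufficient is false, and this is where the argument breaks. Already for $u=P$ (that is, $t=p$) it reads $\sum_{Q\in\mathcal S}\langle g\rangle_Q^P\, w(Q)\lesssim [w]_{A_P}\|g\|_{L^P(w)}^P$ for all $w\in A_P$. Take $d=1$, $\mathcal S=\{[0,2^{-k}):k\geq 0\}$, $w(x)=|x|^{(P-1)(1-\delta)}$ and $g=\sigma_P\ind_{[0,1)}=|x|^{-1+\delta}\ind_{[0,1)}$. Then:
\begin{itemize}
\item $\|g\|_{L^P(w)}^P=\delta^{-1}$ and $[w]_{A_P}\eqsim_P\delta^{1-P}$;
\item $\langle g\rangle_{[0,2^{-k})}^P\, w([0,2^{-k}))\eqsim_P\delta^{-P}2^{-k\delta}$, so the left-hand side is $\eqsim_P\delta^{-P-1}$.
\end{itemize}
That is a factor $\delta^{-1}$ too large.

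Neither of your mechanisms for avoiding the ``second factor'' can work.
\begin{itemize}
\item \emph{Via $\int (Mg)^Pw$.} Once you have spent $[w]_{A_P}$ on $w(Q)\lesssim w(E_Q)$, you would need $\|M\|_{L^P(w)}^P\lesssim 1$ independently of $w$. Testing on $\sigma_P\ind_Q$ gives $\|M\|_{L^P(w)}^P\geq [w]_{A_P}$, and $[w]_{A_P}$ can be comparable to $[w]_{A_q}$ (e.g.\ $w=|x|^{-1+\delta}$).
\item \emph{Via the $\sigma_P$-trick.} The identity $\langle g\rangle_Q^Pw(Q)=\langle w\rangle_Q\langle\sigma_P\rangle_Q^{P-1}\,\bigl(\langle g\sigma_P^{-1}\rangle^{\sigma_P}_Q\bigr)^P\sigma_P(Q)$ is fine. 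But Doob's inequality for $M^{\sigma_P}_{\D}$ only closes the argument if $\sigma_P(Q)\lesssim\sigma_P(E_Q)$, i.e.\ if $\mathcal S$ is Carleson with respect to $\sigma_P$. That constant is controlled only by $[\sigma_P]_{A_\infty}$, and in the example above it is $\eqsim\delta^{-1}$.
\end{itemize}
Step 3 rests on Step 2 and is in any case only a sketch.

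\paragraph{The missing idea: keep the percentile.} An average can be large because of a tiny set; a percentile cannot. That is exactly what buys a single factor of $[w]_{A_q}$, so do not trade the percentile for an average at all. The paper argues as follows.
\begin{itemize}
\item It reduces to $t=p$: the case $t>p$ follows from $\ell^p\hookrightarrow\ell^t$, and $t<p$ from \cite[Theorem 4.2]{NSS24}.
\item It makes $\mathcal S$ $(1-\frac r2)$-sparse via \cite[Lemma 6.6]{LN15}.
\item The set $F_Q=\{x\in Q:|f(x)|>\frac12\per^r_Q(|f|)\}$ has $|F_Q|\geq r|Q|$, so $G_Q=E_Q\cap F_Q$ has $|G_Q|\geq\frac r2|Q|$.
\item Then \eqref{eq:RDq} gives $w(Q)\leq(2/r)^q[w]_{A_q}w(G_Q)$.
\end{itemize}
Hence
\[
\sum_{Q\in\mathcal S}\per^r_Q(|f|)^p\,w(Q)\lesssim[w]_{A_q}\sum_{Q\in\mathcal S}\per^r_Q(|f|)^p\,w(G_Q)\leq 2^p[w]_{A_q}\sum_{Q\in\mathcal S}\int_{G_Q}|f|^pw\leq 2^p[w]_{A_q}\|f\|_{L^p(w)}^p.
\]
This uses only the disjointness of the $G_Q$; no maximal operator appears.

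If you want to keep your averaging reduction, you must not pass from $[w]_{A_q}$ to $[w]_{A_P}$; you would have to exploit $P>q$. With $s=\frac{P-1}{q-1}$, $\sigma_q=w^{-1/(q-1)}=\sigma_P^s$ and $\rho_Q=\langle\sigma_q\rangle_Q^{1/s}/\langle\sigma_P\rangle_Q$, one has
\[
\langle w\rangle_Q\langle\sigma_P\rangle_Q^{P-1}=\langle w\rangle_Q\langle\sigma_q\rangle_Q^{q-1}\,\rho_Q^{-(P-1)}.
\]
So a large reverse H\"older ratio of $\sigma_P$, which is the source of a large $\sigma_P$-Carleson constant, is paid for by a small $A_P$-quantity on that cube. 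Splitting $\mathcal S$ according to $\rho_Q$ is an additional argument that is not in your plan.
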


\begin{proof}
We first note that it suffices to prove the statement for $p=t$. Indeed, the case $p<t$  follows from the embedding $\ell^p \hookrightarrow \ell^t$ and the case $p>t$ from \cite[Theorem 4.2]{NSS24} and the consequence stated directly below. Furthermore, using \cite[Lemma 6.6]{LN15}, we may assume without loss of generality that $\mc{S}$ is $(1-\frac{r}{2})$-sparse. Finally, we may assume that $\per^r_Q(|f|)>0$ for all $Q \in \mathcal{S}$.

Take $p=t$ and for every $Q \in \mc{S}$ let $E_Q\subseteq Q$ be such that $\abs{E_Q}\geq (1-\frac{r}{2})\abs{Q}$ and such that the $E_Q$'s are disjoint. Furthermore, let $$F_Q:= \cbraceb{x \in Q:|f(x)|>\tfrac12\per^r_Q(|f|)},$$ for which we note that $\abs{F_Q} \geq r\abs{Q}$ by definition. Hence, for $G_Q:=E_Q\cap F_Q$ we have
\[
\abs{G_Q} = \abs{F_Q \setminus E_Q^{\mathrm{c}}}\geq r\abs{Q} - \tfrac{r}{2}\abs{Q}  = \tfrac{r}{2}\abs{Q}. 
\]
Since $w \in A_q$, we deduce from \eqref{eq:RDq} that
\[
w(G_Q) \geq [w]_{A_q}^{-1} \brb{\tfrac{r}{2}}^q \cdot w(Q).
\]
We can now calculate
\begin{align*}
     \nrmB{\brB{\sum_{Q\in \mc{S}} \per^r_Q(|f|)^p \ind_Q}^{1/p} }_{L^p(w)}^p &= \sum_{Q \in \mc{S}} \per^r_Q(|f|)^p w(Q)\\
     &\lesssim_{r,q} [w]_{A_q} \sum_{Q \in \mc{S}} \per^r_Q(|f|)^p w(G_Q)\\
     &\lesssim_p [w]_{A_q} \sum_{Q \in \mc{S}} \int_{G_Q} |f(x)|^pw(x)\dd x\\
     &\leq [w]_{A_q} \int_{\R^d} |f(x)|^pw(x)\dd x = [w]_{A_q}\nrm{f}_{L^p(w)}^p,
\end{align*}
finishing the proof.
\end{proof}

Before using Proposition \ref{prop:weightedsparse} to deduce $H^p(w)$ to $L^p(w)$ estimates for Calder\'on--Zygmund operators, we note that the weight dependence in Proposition \ref{prop:weightedsparse} is sharp. Recall that $[w]_{A_\infty}\lesssim_q [w]_{A_q}$ for all $q \in [1,\infty)$.
\begin{lemma}
     Let $p,t \in (0,\infty)$, $q \in (1,\infty)$, $r \in (0,1)$ and $w \in A_q$. If $\alpha \geq 0$ is such that for all $\eta$-sparse collections of cubes $\mc{S}$ and all $f \in L^p(w)$ we have
    \begin{equation*}
        \nrmB{\brB{\sum_{Q\in \mc{S}} \per^r_Q(|f|)^t \ind_Q}^{1/t} }_{L^p(w)} \lesssim [w]_{A_q}^{\alpha}\cdot \nrm{f}_{L^p(w)},
    \end{equation*}
     with implicit constant independent of $w$, then $\alpha \geq \max\cbrace{\frac1p,\frac1t}$.
\end{lemma}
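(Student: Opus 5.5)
We need two lower bounds: $\alpha \geq \frac1p$ and $\alpha\geq \frac1t$. For each we test the inequality against power weights $w(x)=\abs{x}^{\gamma}$ with $\gamma\uparrow d(q-1)$, for which $[w]_{A_q}\eqsim_{d,q} (d(q-1)-\gamma)^{-(q-1)}$, or against $\gamma\downarrow -d$, where $[w]_{A_q}\eqsim (\gamma+d)^{-1}$. We compute both sides explicitly and let the parameter tend to the endpoint.

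\emph{Step 1: $\alpha\geq \frac1p$.} We take the sparse family consisting of the single cube $\mc{S}=\{Q_0\}$, with $Q_0=[0,1)^d$ (trivially sparse). We choose $f=\ind_{F}$ with $F\subseteq Q_0$ of Lebesgue measure slightly more than $r\abs{Q_0}$, so that $\per^r_{Q_0}(\abs f)= 1$. The left-hand side is then $w(Q_0)^{1/p}$ and the right-hand side is $[w]_{A_q}^\alpha w(F)^{1/p}$. We take $w(x)=\abs{x}^{\gamma}$ with $\gamma\downarrow -d$, so that the mass of $w$ concentrates at the origin, and place $F$ away from the origin, e.g.\ $F\subseteq\{x\in Q_0: \abs{x}\geq c_r\}$, which is possible if $r<1-\epsilon$ (for $r$ near $1$ take a cube further away). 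Then $w(F)\eqsim 1$ while $w(Q_0)\eqsim (\gamma+d)^{-1}\eqsim [w]_{A_q}$. This gives $[w]_{A_q}^{1/p}\lesssim [w]_{A_q}^{\alpha}$, and letting $\gamma\to-d$ forces $\alpha\geq\frac1p$.

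\emph{Step 2: $\alpha\geq \frac1t$.} This is relevant when $t<p$, and it is where a genuinely multiscale family is needed. We take $\mc{S}=\{Q_k\}_{k\geq0}$ with $Q_k=[0,2^{-k})^d$, which is a sparse chain, and the same weight $\abs{x}^\gamma$ with $\gamma\downarrow -d$, writing $\epsilon=\gamma+d$. We take $f=\sum_k a_k \ind_{F_k}$, where $F_k\subseteq Q_k\setminus Q_{k+1}$ has a fixed proportion $>r$ of $\abs{Q_k}$ (possible if $r<1-2^{-d}$, adjusting the chain ratio for larger $r$). We choose the $a_k$ so that $\per^r_{Q_k}(\abs f)\geq a_k$, e.g.\ $a_k$ decreasing, and tune them to the weight. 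Since $w(Q_k\setminus Q_{k+1})\eqsim 2^{-k\epsilon}$, we get $\nrm{f}_{L^p(w)}^p\eqsim \sum_k a_k^p 2^{-k\epsilon}$. On $Q_j\setminus Q_{j+1}$ the square function is $(\sum_{k\leq j} a_k^t)^{1/t}$. Choosing $a_k\equiv1$ for $k\le K=\epsilon^{-1}$ and $0$ afterwards yields a left side of order $(\sum_{j\le K} j^{p/t}2^{-j\epsilon})^{1/p}\eqsim \epsilon^{-1/t-1/p}$ against a right side of order $\epsilon^{-1/p}$. The ratio is $\epsilon^{-1/t}\eqsim [w]_{A_q}^{1/t}$, so $\alpha\geq\frac1t$.

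The main obstacle is the percentile bookkeeping: we must ensure that $\per^r_{Q_k}(\abs f)\gtrsim a_k$ uniformly in $k$ for the chosen geometry and arbitrary $r\in(0,1)$, and that the $A_q$ constant of the power weight behaves as claimed. We handle general $r$ by replacing dyadic halving with a chain $Q_{k+1}$ of side $\theta\ell(Q_k)$, where $\theta$ is small depending on $r$, so that annuli carry proportion $>r$; this only changes implicit constants.
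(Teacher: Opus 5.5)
Your argument is correct and is essentially the paper's. For $\alpha\geq\frac1p$ you test a single cube against a weight with $w(Q_0)/w(F)\eqsim[w]_{A_q}$, and for $\alpha\geq\frac1t$ you use the chain $Q_k=[0,2^{-k})^d$ with the power weight $\abs{x}^{-d+\epsilon}$ and the count $\sum_j j^{p/t}2^{-j\epsilon}\eqsim\epsilon^{-1-p/t}$.

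The differences are cosmetic. For $\frac1p$ the paper uses the two-valued weight $\varepsilon\ind_R+\ind_{\R^d\setminus R}$ with $f=\varepsilon^{-1/p}\ind_R$ instead of a power weight. For $\frac1t$ it simply takes $f=\ind_{Q_0}$, so $\per^r_{Q_k}(\abs{f})=1$ for all $k$ and all $r\in(0,1)$. This makes your sets $F_k$, the cutoff $K$ and the $\theta$-chain unnecessary; the paper instead reduces to $\eta=\frac12$ via \cite[Lemma 6.6]{LN15}. Your $\theta$-chain would also cover large $\eta$, but only if $\theta$ is chosen small depending on $\eta$ as well, which you did not state.
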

\begin{proof} By 
\cite[Lemma 6.6]{LN15}, we may assume without loss of generality that $\eta=\frac12$ and by monotonicity we may assume $r<\frac12$.
   For all $k \geq 0$ define $Q_k:= [0,2^{-k})^d$ and set $\mc{S}:=\cbrace{Q_k:k\geq 0}$, which is $\frac12$-sparse.

   First let $R\subseteq Q_0$ such that $\abs{R} = 2r\abs{Q_0}$. For $\varepsilon\in(0,1)$ define
   \begin{align*}
       w_\varepsilon:= \varepsilon\ind_R +\ind_{\R^d\setminus R},
   \end{align*}
   for which we have $[w_\varepsilon]_{A_q} \eqsim \varepsilon^{-1}$. Take $f_{\varepsilon} = \varepsilon^{-1/p} \ind_R$, for which we note that $$\nrm{f_\varepsilon}_{L^p(w_\varepsilon)} =\abs{R}^\frac1p\eqsim_{p,r} 1.$$ Finally, we calculate
   $$
\nrmB{\brB{\sum_{Q\in \mc{S}} \per^r_Q(|f_\varepsilon|)^t \ind_Q}^{1/t} }_{L^p(w_\varepsilon)}  \geq \nrmb{\per^r_{Q_0}(|f_{\varepsilon}|) \ind_{Q_0}}_{L^p(w_\varepsilon)}  = \varepsilon^{-1/p}\cdot w_\varepsilon(Q_0)^{1/p} \eqsim_{p} [w_\varepsilon]_{A_q}^{1/p},
   $$
   so $\alpha \geq \frac1p$.

   Next, for $\varepsilon\in (0,1)$ define
   $$
w_{\varepsilon}(x):= \varepsilon \abs{x}^{-d+\varepsilon} \ind_{Q_0}(x) + \ind_{\R^d\setminus Q_0}(x), \qquad x \in \R^d,
   $$ for which we have $[w_\varepsilon]_{A_q} \eqsim \varepsilon^{-1}$. Furthermore, define $f:=\ind_{Q_0}$, which satisfies $\nrm{f}_{L^p(w_{\varepsilon})}\eqsim 1$. We again calculate
   \begin{align*}
       \nrmB{\brB{\sum_{Q\in \mc{S}} \per^r_Q(|f|)^t \ind_Q}^{1/t} }_{L^p(w_{\varepsilon})}&\geq \nrmB{\brB{\sum_{k=0}^\infty\ind_{Q_k}}^{1/t} }_{L^p(w_{\varepsilon})} \\&\geq \brB{\sum_{k=1}^\infty k^{p/t} w(Q_{k-1} \setminus Q_{k})}^{1/p}\\
       &\eqsim_{p} \brB{\sum_{k=1}^\infty k^{p/t} \varepsilon 2^{-k\varepsilon}}^{1/p}\eqsim_{p,t} \varepsilon^{-1/t} \eqsim [w_{\varepsilon}]_{A_q}^{1/t},
   \end{align*}
   so $\alpha \geq \frac1t$.
\end{proof}

Combining Theorem \ref{th:theoremC} with Propositions \ref{prop:Mschar} and \ref{prop:weightedsparse}, we get quantitative weighted estimates for $s$-smooth Calder\'on--Zygmund operators from $H^p(w)$ to $L^p(w)$.

\begin{theorem}\label{th:theoremCZO}
Let $s>0$ and let $T$ be an $s$-smooth Calder\'on-Zygmund operator. Let $p \in (0,\infty)$ and $w \in A_\infty$. For all $f \in L^\infty_c(\R^d)$  we have
\begin{align} \label{eq:TfweightA_infty}
\nrm{Tf}_{L^p(w)}  &\lesssim_{T,p,s}  [w]_{A_\infty} \cdot \nrmb{\sup_{Q} \per^r_Q(\MaxAvg^s_{3Q}f)\ind_Q}_{L^p(w)}\intertext{with $r\in (0,1)$ only depending on $d$. For any $q \in (1,\infty)$ such that $w \in A_q$, we have}
\nrm{Tf}_{L^p(w)}  &\lesssim_{T,p,q,s}  [w]_{A_q}^{\frac1p} [w]_{A_\infty}^{(1-\frac1{p})_+} \cdot \nrm{\MaxAvg^sf}_{L^p(w)}\label{eq:TfweightA_q}
\end{align}
In particular, if $s\geq d(\frac{q}{p}-1)$, then $T$ is bounded from $H^p(w)$ to $L^p(w)$.
\end{theorem}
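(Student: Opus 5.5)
The plan is to combine the local sparse bound of Theorem \ref{th:theoremC} with weighted estimates for sparse operators. First I would globalize Theorem \ref{th:theoremC}. Given $f\in L^\infty_c(\R^d)$, fix a cube $Q_0\supseteq\supp f$ and apply Theorem \ref{th:theoremC} to the cubes $Q_k:=2^kQ_0$ for $k\geq 0$; this is allowed since $f$ is supported in each $Q_k$. This yields $\tfrac12$-sparse families $\Ss_k\subseteq\D(Q_k)$ such that
\begin{equation*}
|Tf|\ind_{Q_k}\lesssim \sum_{Q\in\Ss_k} a_Q\ind_Q, \qquad a_Q:=\per^r_Q(\MaxAvg^s_{3Q}f).
\end{equation*}
Every estimate below will be uniform in $k$, so monotone convergence as $k\to\infty$ transfers the bounds from $Q_k$ to all of $\R^d$. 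If the dyadic lattices $\D(Q_k)$ are not nested, this causes no harm, because the weighted bounds only depend on the sparseness constant.

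For \eqref{eq:TfweightA_infty}, the task is to bound an $\ell^1$-sparse sum $\sum_{Q\in\Ss}a_Q\ind_Q$ in $L^p(w)$ by $[w]_{A_\infty}$ times $\nrm{\sup_{Q}a_Q\ind_Q}_{L^p(w)}$. I would invoke \cite[Theorem 4.2]{NSS24}, the same result already used in the proof of Proposition \ref{prop:weightedsparse}. As a fallback argument, sparse families satisfy the $w$-Carleson packing condition
\begin{equation*}
\sum_{Q\in\Ss,\,Q\subseteq R}w(Q)\lesssim[w]_{A_\infty}\,w(R).
\end{equation*}
Combined with a stopping-time or Carleson-embedding argument, in which each $a_Q$ is dominated by $\inf_Q \sup_{Q'} a_{Q'}\ind_{Q'}$, this gives the claim. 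I expect this step to be the main obstacle, because the exact power of $[w]_{A_\infty}$ must be tracked uniformly in $p\in(0,\infty)$, and the cases $p\leq 1$ and $p>1$ behave differently.

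For \eqref{eq:TfweightA_q}, the route is more direct. Since $\MaxAvg^s_{3Q}f\leq\MaxAvg^s f$ pointwise and $\per^r_Q$ is monotone, we have $a_Q\leq\per^r_Q(\MaxAvg^sf)$. Proposition \ref{prop:weightedsparse} with $t=1$ then gives
\begin{equation*}
\nrm{Tf}_{L^p(w)}\lesssim[w]_{A_q}^{\frac1p}[w]_{A_\infty}^{(1-\frac1p)_+}\nrm{\MaxAvg^sf}_{L^p(w)}.
\end{equation*}
Finally, suppose $s\geq d(\frac qp-1)$. Then $\tilde q:=p(1+\frac sd)\geq q>1$, and hence $w\in A_q\subseteq A_{\tilde q}$. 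Proposition \ref{prop:Mschar} gives $\nrm{\MaxAvg^sf}_{L^p(w)}\lesssim\nrm{f}_{H^p(w)}$ for $f\in H^p(w)\cap L^\infty_c(\R^d)$. The $H^p(w)\to L^p(w)$ boundedness then follows by density of such functions, using the finite atomic decomposition cited in the proof of Proposition \ref{prop:Mschar}.
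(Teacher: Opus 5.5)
Your proposal is correct and follows essentially the same proof as the paper. The three steps match. First, Theorem \ref{th:theoremC} combined with \cite[Theorem 4.2]{NSS24}, followed by enlarging the supremum to all cubes, gives \eqref{eq:TfweightA_infty}. Second, Theorem \ref{th:theoremC} together with $\MaxAvg^s_{3Q}f\le\MaxAvg^sf$ and Proposition \ref{prop:weightedsparse} at $t=1$ gives \eqref{eq:TfweightA_q}. Third, Proposition \ref{prop:Mschar}, which applies since $p(1+\frac sd)\ge q>1$, together with density of $H^p(w)\cap L^\infty_c(\R^d)$ gives the Hardy space bound. Your exhaustion by the cubes $2^kQ_0$ just makes explicit a globalization step the paper leaves implicit. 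One caution: for $p<1$, your Carleson-packing fallback carried out naively via $p$-subadditivity only gives $[w]_{A_\infty}^{1/p}$, so the citation of \cite{NSS24} (or an exponential-decay stopping-time argument) is genuinely needed there for the linear dependence.
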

\begin{proof}
\eqref{eq:TfweightA_infty} follows  by first  combining Theorem \ref{th:theoremC} with \cite[Theorem 4.2]{NSS24} and then expanding the supremum to all cubes $Q\subseteq \R^d$. Similarly, \eqref{eq:TfweightA_q} is a corollary of Theorem \ref{th:theoremC} combined with Proposition \ref{prop:weightedsparse} using $t=1$. The last statement follows from \eqref{eq:TfweightA_q} by Proposition~\ref{prop:Mschar} and the density of $H^p(w) \cap L^\infty_{c}(\R^d)$ in $H^p(w)$, which follows from the atomic decomposition of $H^p(w)$ (see \cite[Chapter 8]{ST89}).
\end{proof}

Qualitatively, the boundedness of $T$ from $H^p(w)$ to $L^p(w)$ in Theorem \ref{th:theoremCZO} is already known; see, for example, \cite{HO17} and the references therein.
The dependence on the weight characteristic in \eqref{eq:TfweightA_q} is sharp for $p \geq 1$, as we will show in Lemma \ref{lem:SharpHilber} below. Hence, Theorem \ref{th:theoremCZO} may be regarded as an $H^1(w)\to L^1(w)$ analogue of the $A_2$ theorem \cite{Hy12}. To the best of the authors' knowledge, Theorem \ref{th:theoremCZO} is the first quantitatively sharp weighted estimate for $s$-smooth Calder\'on--Zygmund operators in the Hardy space setting. 
Moreover, it recovers several results from the literature within a  cancellative framework.
For example:
\begin{itemize}
\item In \cite{CF74} it was shown  for $p \in (0,\infty)$ and $w \in A_\infty$ that
\begin{align*}
\nrm{Tf}_{L^p(w)} \lesssim_{T,p,w} \nrm{Mf}_{L^p(w)}, \qquad f \in L^\infty_c(\R^d).
\end{align*}
\eqref{eq:TfweightA_q} is an improvement of this result, replacing the Hardy--Littlewood maximal operator $M$ on the right-hand side by the cancellative maximal operator $\MaxAvg^s$, which is smaller. Furthermore, \eqref{eq:TfweightA_q}  provides (sharp) quantitative dependence on $[w]_{A_q}$.
\item By the weak $L^1$-boundedness of the Hardy--Littlewood maximal operator $M$, we can estimate $$\per^r_Q(\MaxAvg^s_{3Q}f)\ind_Q \leq \per^r_Q(M(f\ind_{3Q}))\ind_{3Q}   \lesssim \tfrac1r\, \ipb{\abs{f}}_{3Q} \ind_{3Q},$$  \eqref{eq:TfweightA_infty}  implies 
$$
\nrm{Tf}_{L^p(w)}  \lesssim_{T,p}  [w]_{A_\infty} \cdot \nrm{Mf}_{L^p(w)},\qquad f \in L^\infty_c(\R^d).
$$
which for $p=1$ was previously obtained in \cite[Lemma 2.1]{LOP2009}.
\end{itemize}

\begin{lemma}\label{lem:SharpHilber}
     Let $s>0$, $p \in (0,\infty)$, $q \in (1,\infty)$ and suppose $w \in A_q$. Suppose $s>\frac1p-1$ and let $H$ denote the Hilbert transform. If $\alpha \geq 0$ is such that for all $f \in H^p(w) \cap L^1_{\loc}(\R)$ we have
    \begin{equation*}
        \nrm{Hf}_{L^p(w)} \lesssim [w]_{A_q}^{\alpha}\cdot \nrm{\MaxAvg^sf}_{L^p(w)},
    \end{equation*}
    with implicit constant independent of $w$, then $\alpha \geq 1$.
\end{lemma}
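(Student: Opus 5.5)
We construct a one-parameter family of power weights $w_\varepsilon$ on $\R$ with $[w_\varepsilon]_{A_q}\eqsim\varepsilon^{-1}$, together with a fixed test function $f$, such that $\nrm{\MaxAvg^s f}_{L^p(w_\varepsilon)}\lesssim 1$ uniformly in $\varepsilon$ while $\nrm{Hf}_{L^p(w_\varepsilon)}\gtrsim\varepsilon^{-1}$. Letting $\varepsilon\to 0$ then forces $\alpha\geq 1$. The intended mechanism is the one used in the $A_2$-sharpness examples: a weight that is nearly non-integrable at the origin makes the logarithmic tail of the Hilbert transform cost a factor $\varepsilon^{-1}$.

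\textbf{Choice of weight and test function.} Take $w_\varepsilon(x)=\abs{x}^{(1-\varepsilon)(q-1)}$ near the origin, extended suitably (for instance by $\abs{x}^{(1-\varepsilon)(q-1)}$ everywhere, or by truncation). This is the standard extremal power weight: $w_\varepsilon^{-1/(q-1)}=\abs{x}^{-1+\varepsilon}$ is barely integrable at $0$, so $[w_\varepsilon]_{A_q}\eqsim\varepsilon^{-(q-1)}$. This exponent is not $\varepsilon^{-1}$, so we rescale: choose the exponent $a=(1-\varepsilon)$ times the critical value $q-1$ on the appropriate side, and then rename $\varepsilon$ so that $[w]_{A_q}\eqsim\varepsilon^{-1}$.

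For $f$, we need $\MaxAvg^s f$ to decay like $\abs{x}^{-1-s}$ away from its support, which requires vanishing moments. Take $f=\varphi'$ for a smooth bump $\varphi$ (more generally $\varphi^{(N)}$ with $N\ge s$), supported near a point where the weight is harmless, e.g.\ on $[1,2]$ or at the origin. The condition $s>\frac1p-1$ makes $\abs{x}^{-(1+s)p}$ integrable at infinity, and the weight's power must also be accommodated there. The main calculation will show that $\MaxAvg^s f\lesssim \min(1,\abs{x}^{-1-s})$ (by the Taylor argument in Proposition~\ref{prop:Mschar}), so $\nrm{\MaxAvg^s f}_{L^p(w)}\lesssim 1$ provided the weight grows slower than $\abs{x}^{(1+s)p-1}$ at infinity. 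For this reason it is cleaner to put the singular part of $w$ at infinity or the origin and $f$ elsewhere.

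\textbf{Where the factor $\varepsilon^{-1}$ comes from, and the main obstacle.} Since $f=\varphi'$ has mean zero, $Hf=(H\varphi)'$ decays like $\abs{x}^{-2}$ and carries no logarithmic gain. This is the main obstacle: a cancellative $f$ kills the standard tail. We would therefore instead take $f$ with only the moments that $\MaxAvg^s$ actually sees. Note that $\MaxAvg^s f\gtrsim$ the tail $\abs{x}^{-1}\abs{\int f}$ whenever $\int f\neq 0$, so we must have $\int f=0$, and then $Hf\sim \abs{x}^{-2}$ as well.

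The plan is thus to exploit the dual endpoint: place $f$ adapted to the weight at scale $\varepsilon$. Concretely, take $w=\abs{x}^{-1+\varepsilon}$-type (so $[w]_{A_q}\eqsim\varepsilon^{-1}$ from $w$'s own near-nonintegrability) and $f=\ind_{[0,1]}-\ind_{[-1,0]}$ composed appropriately, or an atom $f=h_{[0,1)}$. Then $Hf(x)\approx\log\abs{1/x}$ near $0$, so $\int_{\abs{x}<1}\abs{Hf}^p\abs{x}^{-1+\varepsilon}\,dx\eqsim\varepsilon^{-1-p}$. Meanwhile $\MaxAvg^s f\lesssim1$ near $0$ and decays like $\abs{x}^{-1-s}$, so $\nrm{\MaxAvg^s f}^p_{L^p(w)}\eqsim\varepsilon^{-1}$. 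The ratio is $\varepsilon^{-1}=[w]_{A_q}$, giving $\alpha\ge1$. The delicate checks are the upper bound for $\MaxAvg^s h$ near the jump, which is fine since $\MaxAvg^s f\le\nrm f_\infty$, and the lower bound for $\abs{Hf}$ near $0$, which follows from an explicit formula.
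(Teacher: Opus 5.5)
Setting aside your exploratory first attempts, your final construction uses the same mechanism as the paper. The weight is barely integrable at the origin with $[w_\varepsilon]_{A_q}\eqsim\varepsilon^{-1}$. The test function has a jump at $0$, so $\abs{Hf(x)}\gtrsim\log(1/\abs{x})$ there. The proof then compares $\int_0^{1/2}\log(1/x)^p x^{-1+\varepsilon}\,dx\eqsim\varepsilon^{-1-p}$ with $\nrm{\MaxAvg^s f}_{L^p(w_\varepsilon)}^p\lesssim\varepsilon^{-1}$. The paper's $w_\varepsilon=\varepsilon x^{-1+\varepsilon}\ind_{(0,1)}+\ind_{\R\setminus(0,1)}$ is just normalized so that $w_\varepsilon((0,1))=1$, which is cosmetic.

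However, one claim you make is false. Both $h_{[0,1)}$ and $\ind_{[0,1]}-\ind_{[-1,0]}$ have only their zeroth moment vanishing; for instance $\int y\,h_{[0,1)}(y)\,dy=-\frac14$. Take an interval $I\ni x$ of length $\eqsim\abs{x}$ containing $[0,1]$, and test $\MaxAvg^s f(x)$ with $\varphi\in\mathcal{F}_s(I)$ equal to $c\,y/\abs{I}$ near $[0,1]$. This gives $\MaxAvg^s f(x)\gtrsim\abs{x}^{-2}$, so the decay $\abs{x}^{-1-s}$ you assert fails once $s>1$.

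Whether this matters depends on how you extend the weight, which the phrase ``$\abs{x}^{-1+\varepsilon}$-type'' leaves open.

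If $w_\varepsilon=\abs{x}^{-1+\varepsilon}$ on all of $\R$, the error is harmless. The weight decays at infinity, so even the crude bound $\MaxAvg^s f\le Mf\lesssim\min\{1,\abs{x}^{-1}\}$ gives a tail $\int_{\abs{x}>1}\abs{x}^{-p-1+\varepsilon}\,dx\le 4/p$ for $\varepsilon<p/2$, and your argument closes. Membership $f\in H^p(w_\varepsilon)$ follows from $\MaxAvg^\psi f\lesssim Mf$. This route needs neither cancellation of $f$ nor the hypothesis $s>\frac1p-1$, so your remark that one must have $\int f=0$ is not right for this weight.

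If instead the weight equals $1$ away from the origin, as in the paper, the tail is $\int_{\abs{x}>1}\abs{x}^{-2p}\,dx=\infty$ for $p\le\frac12$. That range is permitted by $s>\frac1p-1$. This is exactly why the paper uses $f=\sum_{k=0}^{m+1}(-1)^k\binom{m+1}{k}\ind_{(k,k+1)}$: it keeps a jump at $0$ but has vanishing moments up to order $m$. The Taylor argument then gives $\MaxAvg^s f\lesssim\abs{x}^{-1-s}$, which is $p$-integrable at infinity precisely when $s>\frac1p-1$.

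So either commit to the global power weight and replace the decay claim by the $\abs{x}^{-1}$ bound, or keep the truncated weight and use the paper's higher-order difference. With either fix, the proof is complete.
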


\begin{proof}
    For $\varepsilon \in(0,1)$ define
    $$
w_\varepsilon(x):=\varepsilon x^{-1+\varepsilon} \ind_{(0,1)}(x)+\ind_{\R\setminus (0,1)}(x), \qquad x \in \R,
    $$
    for which we have
   $
[w_\varepsilon]_{A_q} \eqsim \varepsilon^{-1}.$ Write $s = m+\delta$ with $m\in \N$ and $\delta \in (0,1]$ and define
$$
f(x):= \sum_{k=0}^{m+1} (-1)^k {\binom{m+1}{k}}\ind_{(k,k+1)}(x), \qquad x\in \R,
$$
for which we note that
$$
\int_{\R} x^k f(x) \dd x =0, \qquad 0\leq k\leq m.
$$
Since $f$ has a jump discontinuity in $x=0$, we know that $\abs{Hf(x)} \eqsim \log (\frac1{|x|})$ for $x \in (-\frac12,\frac12)$. Therefore
\begin{align*}
    \nrm{Hf}_{L^p(w_\varepsilon)} &\gtrsim \brB{\int_0^\frac12 \log(\tfrac{1}{x})^p \varepsilon x^{-1+\varepsilon} \dd x}^{1/p}\gtrsim \varepsilon^{-1}\eqsim [w_\varepsilon]_{A_q}.
\end{align*}
Now to estimate $\MaxAvg^sf$, fix $x \in \R\setminus(-1,m+3)$ and take an interval $I$ containing $x$ such that $I\cap (0,m+2) \neq \emptyset$. Furthermore, fix a $\varphi \in \mc{F}_s(I)$ and let $P^m_0 (\varphi)$ denote the Taylor polynomial of $\varphi$ at $0$ of degree $m$. Since $\varphi$ is $s$-smooth and supported on $I$, we have
$$
\abs{\varphi(y) - P_{0}^m (\varphi)(y)} \lesssim_{s} \frac{\abs{y}^s}{|I|^s}, \qquad y \in I,
$$
and therefore, using the cancellative properties of $f$, we have
\begin{align*}
     \frac{1}{|I|}\absB{\int_\R f(y)\varphi(y)\dd y}  = \frac{1}{|I|}\absB{\int_\R f(y)\brb{\varphi(y)- P_{0}^m (\varphi)(y)}\dd y}\lesssim_s \frac{1}{|I|^{1+s}}.
\end{align*}
Hence, taking the supremum over all $\varphi$ and $I$ and noting that $|I|\gtrsim_m \abs{x}$, we deduce
\begin{align*}
    \MaxAvg^sf(x) \lesssim_s \abs{x}^{-1-s},\qquad x \in \R\setminus(-1,m+3).
\end{align*}
Therefore, since $s>\frac1p-1$, we have
\begin{align*}
    \nrm{\MaxAvg^sf}_{L^p(w_\varepsilon)} \lesssim_s \nrm{\ind_{(-1,m+2)}}_{L^p(w_\varepsilon)}+\nrm{x\mapsto \abs{x}^{-1-s}\ind_{\R\setminus (-1,m+3)}(x)}_{L^p(w_\varepsilon)} \lesssim_{p,s} 1
\end{align*}
proving that $\alpha \geq 1$.
\end{proof}

\begin{remark}\label{rem:nonCZweight}
    Theorem \ref{th:theoremA} and Proposition \ref{prop:weightedsparse} can be combined to give a version of Theorem \ref{th:theoremCZO} for Haar shifts. Indeed, with the same proof (only specializing definitions to the dyadic setting) one can prove that for all $f \in L^\infty_c(\mathbb{R}^d)$ we have
    \begin{align*}
        \|\Sh f\|_{L^p(w)} \lesssim_{s,t} [w]_{A_\infty} \|\sup_Q \per_Q^r(\MaxAvg_Q f)\ind_Q\|_{L^p(w)}
    \end{align*}
    and
    \begin{align} \label{weightedSha}
        \|\Sh f\|_{L^p(w)} \lesssim_{s,t} [w]_{A_q}^{\frac{1}{p}} [w]_{A_\infty}^{(1-\frac{1}{p})_+} \|\MaxAvg f\|_{L^p(w)},
    \end{align}
    where, as in Theorem \ref{th:theoremCZO}, the implicit constant depends on the exponent $q$ such that $w \in A_q$. The right-hand side of \eqref{weightedSha} is one of the equivalent definitions of the dyadic weighted $H^p$-norm from \cite{MR545264} and \cite{ST89}.
\end{remark}

\bibliographystyle{alpha}
\bibliography{bibliography}

\begin{thebibliography}{CCDO17}

\bibitem[BCOR19]{BCOR19}
A.~Barron, J.M. {Conde-Alonso}, Y.~Ou, and G.~Rey.
\newblock Sparse domination and the strong maximal function.
\newblock {\em Adv. Math.}, 345:1--26, 2019.

\bibitem[BFP16]{BFP16}
F.~Bernicot, D.~Frey, and S.~Petermichl.
\newblock Sharp weighted norm estimates beyond {C}alder\'on-{Z}ygmund theory.
\newblock {\em Anal. PDE}, 9(5):1079--1113, 2016.

\bibitem[BG70]{BG1970}
D.~L. Burkholder and R.~F. Gundy.
\newblock Extrapolation and interpolation of quasi-linear operators on
  martingales.
\newblock {\em Acta Math.}, 124:249--304, 1970.

\bibitem[CCDO17]{CCDPO17}
J.M. {Conde-Alonso}, A.~Culiuc, F.~{Di Plinio}, and Y.~Ou.
\newblock A sparse domination principle for rough singular integrals.
\newblock {\em Anal. PDE}, 10(5):1255--1284, 2017.

\bibitem[CF74]{CF74}
R.~R. Coifman and C.~Fefferman.
\newblock Weighted norm inequalities for maximal functions and singular
  integrals.
\newblock {\em Studia Math.}, 51:241--250, 1974.

\bibitem[CF75]{CorFeff75}
A.~C\'ordoba and R.~Fefferman.
\newblock A geometric proof of the strong maximal theorem.
\newblock {\em Annals of Mathematics}, 102(1):95--100, 1975.

\bibitem[CMN19]{CMN19}
D.~{Cruz-Uribe}, K.~Moen, and H.V. Nguyen.
\newblock The boundedness of multilinear {C}alder\'{o}n-{Z}ygmund operators on
  weighted and variable {H}ardy spaces.
\newblock {\em Publ. Mat.}, 63(2):679--713, 2019.

\bibitem[CMP12]{CMP12}
D.V. {Cruz-Uribe}, J.M. Martell, and C.~P{\'e}rez.
\newblock Sharp weighted estimates for classical operators.
\newblock {\em Adv. Math.}, 229(1):408--441, 2012.

\bibitem[CR16]{CR16}
J.M. {Conde-Alonso} and G.~Rey.
\newblock A pointwise estimate for positive dyadic shifts and some
  applications.
\newblock {\em Math. Ann.}, 365(3-4):1111--1135, 2016.

\bibitem[DPS25]{DPS23}
K.~Domelevo, S.~Petermichl, and K.A. Skreb.
\newblock Continuous sparse domination and dimensionless weighted estimates for
  the {B}akry-{R}iesz vector.
\newblock {\em J. Reine Angew. Math.}, 824:137--166, 2025.

\bibitem[DWW23]{DiPlinioWickWilliams2023}
F.~{Di Plinio}, B.D. Wick, and T.~Williams.
\newblock Wavelet representation of singular integral operators.
\newblock {\em Mathematische Annalen}, 386(3):1829--1889, August 2023.

\bibitem[GC79]{MR545264}
Jos\'e Garc\'ia-Cuerva.
\newblock Weighted {H}ardy spaces.
\newblock In {\em Harmonic analysis in {E}uclidean spaces ({P}roc. {S}ympos.
  {P}ure {M}ath., {W}illiams {C}oll., {W}illiamstown, {M}ass., 1978), {P}art
  1}, volume XXXV, Part 1 of {\em Proc. Sympos. Pure Math.}, pages 253--261.
  Amer. Math. Soc., Providence, RI, 1979.

\bibitem[Gra14]{Gra14}
L.~Grafakos.
\newblock {\em Classical {F}ourier analysis}, volume 249 of {\em Graduate Texts
  in Mathematics}.
\newblock Springer, New York, third edition, 2014.

\bibitem[HL22]{HytonenLappas}
T.~Hyt\"onen and S.~Lappas.
\newblock The dyadic representation theorem using smooth wavelets with compact
  support.
\newblock {\em J. Fourier Anal. Appl.}, 28(4), 2022.

\bibitem[HO17]{HO17}
J.~Hart and L.~Oliveira.
\newblock Hardy space estimates for limited ranges of {Muckenhoupt} weights.
\newblock {\em Adv. Math.}, 313:803--838, 2017.

\bibitem[HP13]{HP13}
T.P. Hyt\"onen and C.~P\'erez.
\newblock Sharp weighted bounds involving {$A_\infty$}.
\newblock {\em Anal. PDE}, 6(4):777--818, 2013.

\bibitem[HP16]{HP16}
P.~Hagelstein and I.~Parissis.
\newblock Weighted {Solyanik} estimates for the {Hardy}-{Littlewood} maximal
  operator and embedding of {{\({{\mathcal{A}_\infty}}\)}} into
  {{\({\mathcal{A}_p}\)}}.
\newblock {\em J. Geom. Anal.}, 26(2):924--946, 2016.

\bibitem[Hyt12]{Hy12}
T.P. Hyt\"onen.
\newblock The sharp weighted bound for general {C}alder\'on-{Z}ygmund
  operators.
\newblock {\em Ann. of Math.}, 175(3):1473--1506, 2012.

\bibitem[Lac17]{Lac2017}
Michael~T. Lacey.
\newblock An elementary proof of the {$A_2$} bound.
\newblock {\em Israel J. Math.}, 217(1):181--195, 2017.

\bibitem[Ler13a]{Le13b}
A.K. Lerner.
\newblock On an estimate of {C}alder\'on-{Z}ygmund operators by dyadic positive
  operators.
\newblock {\em J. Anal. Math.}, 121:141--161, 2013.

\bibitem[Ler13b]{Le13a}
A.K. Lerner.
\newblock A simple proof of the {$A_2$} conjecture.
\newblock {\em Int. Math. Res. Not.}, (14):3159--3170, 2013.

\bibitem[LLO22]{LLO21}
A.K. Lerner, E.~Lorist, and S.~Ombrosi.
\newblock Operator-free sparse domination.
\newblock {\em Forum Math. Sigma}, 10:Paper No. e15, 28, 2022.

\bibitem[LN19]{LN15}
A.K. Lerner and F.~Nazarov.
\newblock Intuitive dyadic calculus: {T}he basics.
\newblock {\em Expo. Math.}, 37(3):225--265, 2019.

\bibitem[LOP09]{LOP2009}
A.K. Lerner, S.~Ombrosi, and C.~P\'erez.
\newblock {$A_1$} bounds for {C}alder\'on-{Z}ygmund operators related to a
  problem of {M}uckenhoupt and {W}heeden.
\newblock {\em Math. Res. Lett.}, 16(1):149--156, 2009.

\bibitem[MO11]{MO2011}
Henri Martikainen and Tuomas Orponen.
\newblock A characterization of the boundedness of the median maximal function
  on weighted {$L^p$} spaces, 2011.

\bibitem[NSS26]{NSS24}
Z.~Nieraeth, C.B. Stockdale, and B.~Sweeting.
\newblock Weighted weak-type bounds for multilinear singular integrals.
\newblock {\em The Journal of Geometric Analysis}, 36(178), 2026.

\bibitem[Pet07]{Pe07}
S.~Petermichl.
\newblock The sharp bound for the {Hilbert} transform on weighted {Lebesgue}
  spaces in terms of the classical {{\(A_p\)}} characteristic.
\newblock {\em Am. J. Math.}, 129(5):1355--1375, 2007.

\bibitem[ST89]{ST89}
J.~Str\"{o}mberg and A.~Torchinsky.
\newblock {\em Weighted {H}ardy spaces}, volume 1381 of {\em Lecture Notes in
  Mathematics}.
\newblock Springer-Verlag, Berlin, 1989.

\bibitem[Tom75]{Tomkins75}
R.~J. Tomkins.
\newblock On conditional medians.
\newblock {\em Ann. Probability}, 3:375--379, 1975.

\bibitem[Tom78]{Tom78}
R.J. Tomkins.
\newblock Convergence properties of conditional medians.
\newblock {\em Can. J. Stat.}, 6:169--177, 1978.

\bibitem[Tre13]{Treil2013}
S.~Treil.
\newblock Commutators, paraproducts and {BMO} in non-homogeneous martingale
  settings.
\newblock {\em Rev. Mat. Iberoam.}, 29(4):1325--1372, 2013.

\end{thebibliography}

\end{document}